\begin{document}

\bibliographystyle{alpha}

\newtheorem{theorem}{Theorem}[section]
\newtheorem{proposition}[theorem]{Proposition}
\newtheorem{lemma}[theorem]{Lemma}
\newtheorem{corollary}[theorem]{Corollary}

\theoremstyle{definition}
\newtheorem{definition}[theorem]{Definition}
\newtheorem{remark}[theorem]{Remark}
\newtheorem{example}[theorem]{Example}
\newtheorem{notation}[theorem]{Notation}
\newtheorem{construction}[theorem]{Construction}

\title{Vanishing theorems for perverse sheaves on abelian varieties, revisited}

\begin{abstract} 
We revisit some of the basic results of generic vanishing theory, as pioneered by Green and Lazarsfeld, in the context of constructible sheaves. Using the language of perverse sheaves, we give new proofs of some of the basic results of this theory. Our approach is topological/arithmetic, and avoids Hodge theory.
\end{abstract}
\author{Bhargav Bhatt}
\author{Christian Schnell}
\author{Peter Scholze}

\maketitle

\section{Introduction}

Let $A$ be a compact complex torus of dimension $g$. Fix a field $k$ of coefficients, and let
\[ \mathrm{Char}(A) := \underline{\mathrm{Hom}} \big( \pi_1(A), \mathbb G_{m,k}\big)\simeq \mathbb G_{m,k}^{2g} \]
be the $k$-linear character variety of the fundamental group; this is a torus of dimension $2g$ over $k$. Given a character $\chi \in \mathrm{Char}(A)$, we write $L_\chi$ for the associated rank-one local system on $A$. The goal of this paper is to revisit some results describing the behaviour of the cohomology $M \otimes L_\chi$, where $M$ is a fixed constructible sheaf (or complex) on $A$, and $\chi$ varies through points of $\mathrm{Char}(A)$. These results constitute the constructible sheaf variants of the pioneering work of Green and Lazarsfeld \cite{GreenLazarsfeld, GreenLazarsfeldHigherObs}, and have been revisited by many authors in the interim.

\subsection{Generic vanishing}

The first goal of this paper is to give a new short proof for the following vanishing
theorem, which is new in this generality.

\begin{theorem} \label{thm:GV}
Let $M \in \mathrm{Perv}(A, k)$ be a perverse sheaf with $k$-coefficients. Then
\[
	H^i \big( A, M \otimes_k L_\chi \big) = 0 \quad \text{for all $i \neq 0$}
\]
for $\chi$ lying in a non-empty Zariski open subset of $\mathrm{Char}(A)$.
\end{theorem}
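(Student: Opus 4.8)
\emph{Reductions.} Since the assertion concerns the Zariski density of a constructible subset of $\mathrm{Char}(A)$, it is insensitive to extension of $k$. Using that positive-dimensional analytic subvarieties of a compact complex torus are translates of subtori — so that the fibrations $A\to A/B$ by subtori $B$, which split topologically, allow a dévissage on $\mathrm{supp}(M)$ — I would first reduce to $A$ an abelian variety. As $\mathrm{Perv}(A,k)$ has finite length and the desired conclusion is closed under extensions (chase the long exact sequence over the intersection of the two relevant dense opens), I would reduce to $M$ simple. Spreading $(A,M)$ out over a finitely generated $\mathbb Z[1/\ell]$-algebra and specializing at a closed point — legitimate by generic base change for the relative Mellin complex $R(p_2)_*\big(p_1^*M\otimes\mathcal L_{\mathrm{univ}}\big)$ on $\mathrm{Char}(A)$ — I may then assume $A$ is an abelian variety over $\overline{\mathbb F}_q$ and $M$ a geometrically simple perverse $\overline{\mathbb Q}_\ell$-sheaf, hence, after a Tate twist, pure of weight $0$. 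Finally, Poincaré--Verdier duality ($\mathbb D(M\otimes L_\chi)=\mathbb D M\otimes L_{\chi^{-1}}$, $A$ proper, $\chi\mapsto\chi^{-1}$ an automorphism of $\mathrm{Char}(A)$) lets me restrict to proving the vanishing in degrees $i>0$.

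\emph{Induction on $g=\dim A$, via a one-dimensional fibration.} The case $g\le 1$ is elementary: a simple perverse sheaf on a curve is a skyscraper, a shifted rank-one local system, or the intermediate extension of an irreducible local system of rank $\ge 2$ on a punctured curve, and in each case $H^i$ for $i\neq 0$ is computed from $H^0$ or $H^2$ of a local system and vanishes for $\chi$ outside a finite subset (or identically). For $g\ge 2$, choose a one-dimensional subtorus $B\subset A$ in general position and put $f\colon A\to A'':=A/B$, so that $\mathrm{Char}(A'')\subseteq\mathrm{Char}(A)$ is the subtorus of characters trivial on $B$. Fix $\chi_1\in\mathrm{Char}(A)$ with $\chi_1|_B\in\mathrm{Char}(B)$ generic and set $\mathcal K:=Rf_*(M\otimes L_{\chi_1})$. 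For $\chi''\in\mathrm{Char}(A'')$ the projection formula gives $Rf_*(M\otimes L_{\chi_1\chi''})=\mathcal K\otimes L_{\chi''}$, hence $H^i(A,M\otimes L_{\chi_1\chi''})=H^i(A'',\mathcal K\otimes L_{\chi''})$. Since $M\otimes L_{\chi_1}$ is pure, the decomposition theorem splits $\mathcal K\cong\bigoplus_{k=-1}^{1}{}^p\mathcal H^k(\mathcal K)[-k]$ into shifted (semisimple) perverse sheaves on $A''$, the amplitude $[-1,1]$ coming from the relative dimension $1$ of $f$. Applying the inductive hypothesis on $A''$ to each ${}^p\mathcal H^k(\mathcal K)$ then yields, for $\chi''$ generic: $H^i(A,M\otimes L_{\chi_1\chi''})=0$ for $|i|\ge 2$, and $H^{\pm 1}(A,M\otimes L_{\chi_1\chi''})=H^0\big(A'',{}^p\mathcal H^{\pm 1}(\mathcal K)\otimes L_{\chi''}\big)$. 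As a generic character of $A$ is of the form $\chi_1\chi''$ with $\chi_1|_B$ and $\chi''$ generic, the theorem reduces to the vanishing ${}^p\mathcal H^{1}(\mathcal K)={}^p\mathcal H^{-1}(\mathcal K)=0$.

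\emph{The main obstacle.} By proper base change, the stalk of ${}^p\mathcal H^{\pm 1}(\mathcal K)$ at $a''\in A''$ is governed by the cohomology of $(M\otimes L_{\chi_1})$ restricted to $f^{-1}(a'')\cong B$; for generic $a''$ this restriction is, by transversality and a Lefschetz-type surjectivity of fundamental groups, the intermediate extension of an irreducible local system of rank $\ge 2$ on a punctured elliptic curve (or a skyscraper-type object if $\mathrm{supp}(M)\neq A$), whose $H^{\pm 1}$ vanish by the base case. Hence ${}^p\mathcal H^{\pm 1}(\mathcal K)$ is supported on a proper closed subset $D''\subsetneq A''$. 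The hard part will be to upgrade this to genuine vanishing on $D''$: here purity is essential — relative hard Lefschetz identifies ${}^p\mathcal H^{1}(\mathcal K)\cong{}^p\mathcal H^{-1}(\mathcal K)$, and one must combine this with a weight/support estimate (or a secondary dévissage over the lower-dimensional $D''$) to exclude any nonzero summand supported in lower dimension. I expect this vanishing of the extreme perverse direct images to be the technical heart of the argument. (An alternative, purely arithmetic route leading to a parallel obstacle is to work with the isogeny $[N]\colon A\to A$ and count torsion characters: from $[N]_*\overline{\mathbb Q}_\ell=\bigoplus_{\chi\in\mathrm{Char}(A)[N]}L_\chi$ and the projection formula one reduces to an a priori upper bound $\dim H^{\ne 0}(A,[N]^*M)=o(N^{2g})$, which must be extracted from purity of $[N]^*M$ together with the Franecki--Kapranov inequality $\chi(A,M)\ge 0$.)
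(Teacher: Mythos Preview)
Your approach diverges sharply from the paper's and carries several genuine gaps. The paper's argument is short and purely topological: writing $\mathcal{L}_R = \pi_! k$ for the universal cover $\pi\colon V \to A$, the projection formula gives $\mathrm{FM}_A(M) := R\Gamma(A, M \otimes_k \mathcal{L}_R) \simeq R\Gamma_c(V, \pi^* M)$, and since $V \cong \mathbf{C}^g$ is Stein, Artin vanishing forces this into $D^{\geq 0}(R)$ whenever $M \in {}^p D^{\geq 0}$. Combined with Verdier duality, both $\mathrm{FM}_A(M)$ and its $R$-dual lie in $D^{\geq 0}$ for perverse $M$; a standard commutative-algebra lemma then yields $\mathrm{codim}\bigl(\mathrm{Supp}\, H^i \mathrm{FM}_A(M)\bigr) \geq i$, so $\mathrm{FM}_A(M)$ is locally free in degree $0$ over a dense open, and its fibres compute $R\Gamma(A, M \otimes L_\chi)$. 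No reduction to abelian varieties, no characteristic-$p$ specialization, no restriction on $k$ or on $M$.

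Your reductions, by contrast, sacrifice exactly this generality and then still run aground. First, the premise that positive-dimensional analytic subvarieties of a compact complex torus are translates of subtori is false (a theta divisor in an abelian variety is already a counterexample), so the d\'evissage to the algebraic case is not justified as written. Second, and more seriously, spreading $(A,M)$ out over a finite-type $\mathbf{Z}[1/\ell]$-algebra requires $M$ to be of geometric origin: a simple perverse sheaf is $j_{!*}L[d]$ for an irreducible local system $L$ whose monodromy representation into $GL_n(k)$ may be transcendental and need not descend or spread. The theorem is asserted for arbitrary $M$ and arbitrary $k$, and the paper stresses that precisely this generality is new; your route cannot reach it. Third, your induction step requires a one-dimensional abelian subvariety $B \subset A$, but a \emph{simple} abelian variety of dimension $\geq 2$---and these exist in abundance over both $\overline{\mathbf{F}_q}$ and $\mathbf{C}$---has none, so the fibration $f\colon A \to A/B$ is simply unavailable in general. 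Finally, even when such a $B$ exists, you correctly isolate the vanishing of ${}^p\mathcal{H}^{\pm 1}\bigl(Rf_*(M \otimes L_{\chi_1})\bigr)$ as the crux and leave it open; note moreover that what your induction actually needs is not that vanishing but rather $\chi\bigl(A'', {}^p\mathcal{H}^{\pm 1}\bigr) = 0$, since for generic $\chi''$ the inductive hypothesis gives $\dim H^0(A'', N \otimes L_{\chi''}) = \chi(A'', N)$, and there is no evident mechanism---short of the theorem itself---to force that Euler characteristic to vanish.
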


By a standard argument, Theorem~\ref{thm:GV} can be reformulated as the following assertion about constructible sheaves: 

\begin{corollary} \label{cor:GV}
Let $F$ be a constructible sheaf on $A$ with $k$-coefficients.  Then
\[
	H^i \big( A, F \otimes_k L_\chi \big) = 0 \quad \text{for all $i  > \dim(\mathrm{Supp}\ F)$}
\]
for $\chi$ lying in a non-empty Zariski open subset of $\mathrm{Char}(A)$.
\end{corollary}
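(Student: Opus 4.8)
The plan is to deduce Corollary~\ref{cor:GV} from Theorem~\ref{thm:GV} by dévissage along the perverse filtration of the complex $F[0]$.

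Set $d := \dim \mathrm{Supp}(F)$. The first point is the standard bound on the perverse amplitude of a constructible sheaf: ${}^p\mathcal{H}^j(F) = 0$ unless $0 \le j \le d$. For the lower bound, $F$ lies in $D^{\ge 0}_c(A,k)$ (being concentrated in ordinary degree $0$), and $D^{\ge 0}_c(A,k) \subseteq {}^pD^{\ge 0}_c(A,k)$ for the middle perversity, since $i^!$ is left $t$-exact for locally closed immersions. For the upper bound, $F[d]$ has a single nonzero ordinary cohomology sheaf, in degree $-d$, whose support has dimension $d$, so $F[d] \in {}^pD^{\le 0}_c(A,k)$, i.e. ${}^p\mathcal{H}^j(F) = 0$ for $j > d$. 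Moreover each ${}^p\mathcal{H}^j(F)$ is a perverse sheaf with support contained in $\mathrm{Supp}(F)$; this is why only $\dim\mathrm{Supp}(F)$ enters the statement.

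Next, since $L_\chi$ is a rank-one local system, the functor $(-) \otimes_k L_\chi$ is an exact auto-equivalence of $D^b_c(A,k)$ that preserves the support of every ordinary cohomology sheaf and commutes with $i_S^!$ up to tensoring by the (rank-one) restriction $L_\chi|_S$; hence it is $t$-exact for the perverse $t$-structure, and ${}^p\mathcal{H}^j(F \otimes_k L_\chi) \cong {}^p\mathcal{H}^j(F) \otimes_k L_\chi$ for all $j$. Now apply Theorem~\ref{thm:GV} to each of the finitely many perverse sheaves ${}^p\mathcal{H}^j(F)$, $j = 0, \dots, d$: there is a non-empty Zariski open $U_j \subseteq \mathrm{Char}(A)$ with $H^p\big(A, {}^p\mathcal{H}^j(F) \otimes_k L_\chi\big) = 0$ for $p \ne 0$ and $\chi \in U_j$. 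Because $\mathrm{Char}(A) \simeq \mathbb{G}_{m,k}^{2g}$ is irreducible, $U := \bigcap_{j=0}^{d} U_j$ is still a non-empty Zariski open subset.

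Finally, fix $\chi \in U$ and run the hypercohomology spectral sequence of the perverse $t$-structure,
\[
E_2^{p,q} = H^p\big(A, {}^p\mathcal{H}^q(F \otimes_k L_\chi)\big) = H^p\big(A, {}^p\mathcal{H}^q(F) \otimes_k L_\chi\big) \ \Longrightarrow\ H^{p+q}\big(A, F \otimes_k L_\chi\big).
\]
By the two previous steps, $E_2^{p,q} = 0$ whenever $p \ne 0$ or $q \notin [0,d]$, so the spectral sequence degenerates and $H^i\big(A, F \otimes_k L_\chi\big) \cong H^0\big(A, {}^p\mathcal{H}^i(F) \otimes_k L_\chi\big)$, which vanishes for $i > d$. (One may equally well avoid the spectral sequence and induct on the number of nonzero perverse cohomology sheaves, using the long exact sequences attached to the triangles ${}^p\tau_{\le q-1}(F \otimes_k L_\chi) \to {}^p\tau_{\le q}(F \otimes_k L_\chi) \to {}^p\mathcal{H}^q(F) \otimes_k L_\chi[-q] \xrightarrow{+1}$.) Granting Theorem~\ref{thm:GV}, there is no real obstacle here; the only things to check are the compatibility of perverse truncation with $\otimes_k L_\chi$ and the fact that only finitely many dense open conditions are imposed, so that their intersection remains non-empty.
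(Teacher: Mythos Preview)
Your argument is correct and is precisely the ``standard argument'' the paper alludes to without spelling out: the paper gives no proof of Corollary~\ref{cor:GV} beyond the remark that it follows from Theorem~\ref{thm:GV} by a standard reduction. Your d\'evissage via the perverse cohomology sheaves, together with the observation that $F[d] \in {}^p D^{\leq 0}$ and $F \in {}^p D^{\geq 0}$, is exactly what is intended.
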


Theorem~\ref{thm:GV} is an analogue for perverse sheaves of the ``generic vanishing theorem'' of Green and Lazarsfeld \cite{GreenLazarsfeld}; indeed, Theorem~\ref{thm:GV} implies the Kodaira-Nakano type vanishing results of \cite{GreenLazarsfeld} via Hodge theory. 

When $A$ is an abelian variety and $k = \mathbf{C}$, Theorem~\ref{thm:GV} was first proven in \cite{Kraemer+Weissauer:Vanishing} using hard Lefschetz and Tannakian categories arising via convolution, and independently in \cite{Schnell:holonomic} via Laumon-Rothstein's Fourier transform for $\mathcal{D}$-modules on $A$. The new idea of our proof is to pass to the universal covering space of $A$, which is a complex
vector space, and then to apply Artin's vanishing theorem for perverse sheaves on Stein manifolds; this was inspired by \cite[\S IV]{Scholze:Torsion}, which proves a vanishing theorem for the $\mathbf{F}_p$-cohomology of Shimura varieties by invoking the analog of Artin vanishing for the ``perfectoid universal cover'' of the Shimura variety. The implementation of this idea relies on the Fourier-Mellin transform for constructible sheaves on abelian varieties, which coherently interpolates the cohomology of all character twists $M\otimes L_\chi$ of $M$. More precisely, the Fourier-Mellin transform is given by a functor 
\[ \mathrm{FM}_A:D^b_c(A,k) \to D^b_{coh}(\mathrm{Char}(A))\]
from the constructible derived category of $k$-linear sheaves on $A$ to coherent complexes on $\mathrm{Char}(A)$, such that the fiber of $\mathrm{FM}_A(M)$ at a point $\chi \in \mathrm{Char}(A)$ is the cohomology of $M \otimes L_\chi$.

\subsection{Codimension estimates}
From Theorem~\ref{thm:GV} and some basic properties of the Fourier transform, one can deduce some additional results about the ``cohomology support loci''
\[ S^i(A, M) := \{\chi \in \mathrm{Char}(A) \mid H^i( A, M \otimes_k L_\chi) \neq 0\}\ ,\]
which are easily seen to be Zariski closed subsets of $\mathrm{Char}(A)$. The proof of Theorem~\ref{thm:GV} immediately yields that $\mathrm{codim}(S^i(A,M)) \geq |i|$ for all $i$ when $M$ is perverse. To get better estimates, we need to specialize our assumptions on $k$ and $A$.

\begin{theorem} \label{thm:t-structure}
Assume that $k$ is a field of characteristic $0$, and that $A$ is an abelian variety. Let $M \in D^b_c(A,k)$ be an algebraically constructible complex. Then one has
\begin{align*}
	M \in {}^p D^{\leq 0}_c(A,k) \quad &\Longrightarrow \quad \text{$\mathrm{codim}(S^i(A, M)) \geq 2 i$ for every $i \in \mathbf{Z}$,} \\
	M \in {}^p D^{\geq 0}_c(A,k) \quad &\Longrightarrow \quad \text{$\mathrm{codim}(S^i(A, M)) \geq -2i$ for every $i \in \mathbf{Z}$.}
\end{align*}
In particular, if $M$ is a perverse sheaf, then $\mathrm{codim}(S^i(A, M)) \geq |2i|$ for every $i \in \mathbf{Z}$.
\end{theorem}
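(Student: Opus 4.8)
The plan is to reduce, by a dévissage, to the case of a single perverse sheaf, and then to combine Theorem~\ref{thm:GV} with two further ingredients: the ``linearity'' of cohomology support loci on abelian varieties, and the fact that proper smooth pushforward shifts the perverse $t$-structure by the relative dimension.

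First I would set up the reductions. Tensoring with a rank-one local system is $t$-exact for the perverse $t$-structure, so the perverse truncation of $M$ gives a convergent spectral sequence $H^{p}\bigl(A,{}^{p}\mathcal{H}^{q}(M)\otimes L_\chi\bigr)\Rightarrow H^{p+q}(A,M\otimes L_\chi)$ and hence $S^{i}(A,M)\subseteq\bigcup_{q}S^{i-q}\bigl(A,{}^{p}\mathcal{H}^{q}(M)\bigr)$; for $M\in{}^{p}D^{\leq 0}_c$ only $q\leq 0$ occurs (so $i-q\geq i$), and for $M\in{}^{p}D^{\geq 0}_c$ only $q\geq 0$. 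Poincar\'e--Verdier duality on the proper variety $A$ gives $H^{i}(A,M\otimes L_\chi)^{\vee}\cong H^{-i}(A,\mathbf{D}_A M\otimes L_{\chi^{-1}})$, so $S^{i}(A,M)=\iota\bigl(S^{-i}(A,\mathbf{D}_A M)\bigr)$ with $\iota(\chi)=\chi^{-1}$, and $\mathbf{D}_A$ preserves perversity and algebraic constructibility. Putting these together, it is enough to prove the following: \emph{if $N\in\mathrm{Perv}(A,k)$ is algebraically constructible and $j\geq 1$, then every irreducible component of $S^{j}(A,N)$ has codimension $\geq 2j$} (the case $j=0$ being vacuous).

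The key input --- and the only place where the hypotheses that $k$ has characteristic $0$ and that $A$ is an abelian variety are used --- will be the structure (``linearity'') theorem: every irreducible component of $S^{j}(A,N)$ is a torsion translate $\chi_0\cdot p^{*}\mathrm{Char}(A/B)$, where $B\subseteq A$ is an abelian subvariety and $p\colon A\to A/B$ is the quotient. This is the analogue for abelian varieties of the Gabber--Loeser linearity theorem on tori, and --- in keeping with the arithmetic viewpoint --- I would prove it without Hodge theory: spread $A$ and $N$ out over a finitely generated ring, reduce modulo a prime, use that $S^{j}$ then becomes a closed subset of the torus $\mathrm{Char}(A)$ stable under the Frobenius dynamics (hence a finite union of torsion translates of subtori), and invoke Tate's theorem over finite fields to identify the Frobenius-stable subtori that can occur with the $p^{*}\mathrm{Char}(A/B)$. (For a general complex torus this last identification fails, which is exactly why one only gets $\mathrm{codim}\,S^{i}\geq|i|$ there --- and that weaker bound already comes out of the proof of Theorem~\ref{thm:GV}.)

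Granting the linearity theorem, the conclusion is short. Fix a component $Z=\chi_0\cdot p^{*}\mathrm{Char}(A/B)$ of $S^{j}(A,N)$ and put $d=\dim B$, so $\mathrm{codim}_{\mathrm{Char}(A)}Z=2d$; it remains to show $d\geq j$. By the projection formula $R\Gamma(A,N\otimes L_{\chi_0}\otimes p^{*}L_{\chi'})\cong R\Gamma\bigl(A/B,Rp_{*}(N\otimes L_{\chi_0})\otimes L_{\chi'}\bigr)$ for $\chi'\in\mathrm{Char}(A/B)$, and since all of $Z$ lies in $S^{j}(A,N)$ this forces $S^{j}\bigl(A/B,Rp_{*}(N\otimes L_{\chi_0})\bigr)=\mathrm{Char}(A/B)$. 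Now $N\otimes L_{\chi_0}$ is perverse (and $\chi_0$ torsion makes it algebraically constructible), and $p$ is proper and smooth of relative dimension $d$, so $Rp_{*}(N\otimes L_{\chi_0})\in{}^{p}D^{[-d,d]}_c(A/B,k)$; its perverse-truncation spectral sequence then shows $\mathrm{Char}(A/B)\subseteq\bigcup_{|l|\leq d}S^{j-l}\bigl(A/B,{}^{p}\mathcal{H}^{l}(Rp_{*}(N\otimes L_{\chi_0}))\bigr)$, and since $\mathrm{Char}(A/B)$ is irreducible and each term is closed, one term must equal all of $\mathrm{Char}(A/B)$. But $A/B$ is an abelian variety and ${}^{p}\mathcal{H}^{l}(\cdots)$ is perverse, so Theorem~\ref{thm:GV} forbids $S^{m}(A/B,-)=\mathrm{Char}(A/B)$ unless $m=0$; hence $j=l$ and therefore $j=|l|\leq d$. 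The hard part, then, is the linearity statement of the third paragraph --- everything else is dévissage, the projection formula, and the shift of the perverse $t$-structure under $Rf_{*}$ --- and it is precisely the Tate-theorem step, passing from a Frobenius-stable subtorus of $\mathrm{Char}(A)$ to one of the form $p^{*}\mathrm{Char}(A/B)$, that is responsible for the factor $2$ in the codimension bound.
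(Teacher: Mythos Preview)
Your argument is correct, but it is not the paper's argument; in fact it is essentially the earlier approach of Kr\"amer--Weissauer, while the paper's whole point for this theorem is to \emph{bypass} the linearity/structure theorem.

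The paper proves the perverse case directly from Hard Lefschetz (Theorem~\ref{thm:HL}) and a purely algebraic fact about the Fourier transform. The key observation is Proposition~\ref{prop:PullbackFF}: the functor $N\mapsto\mathcal{L}_N$ from $D(R)$ to $D(A,k)$ is fully faithful, so for a point $x\in\mathrm{Char}(A)$ one has $\mathrm{Hom}_{D(A,k)}(\mathcal{L}_{\kappa(x)},\mathcal{L}_{\kappa(x)}[2i])\simeq \mathrm{Ext}^{2i}_{R_x}(\kappa(x),\kappa(x))$, which vanishes once $\mathrm{codim}(x)<2i$ since $R_x$ is regular local of that dimension (Lemma~\ref{lem:HLVanishingResidueField}). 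Hence the cup-product map $c^i\colon\mathcal{L}_{\kappa(x)}[-i]\to\mathcal{L}_{\kappa(x)}[i]$ is zero at such $x$; but Hard Lefschetz says the induced map $H^{-i}(A,M\otimes\mathcal{L}_{\kappa(x)})\to H^{i}(A,M\otimes\mathcal{L}_{\kappa(x)})$ is an isomorphism for $M$ simple perverse, forcing $H^i(A,M\otimes\mathcal{L}_{\kappa(x)})=0$. A descending induction then gives $\mathrm{codim}(\mathrm{Supp}\,H^i(\mathrm{FM}_A(M)))\geq 2i$ (Theorem~\ref{thm:CodimHLSupport}), and the d\'evissage and duality reductions you wrote down finish the job.

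Your route buys a more geometric picture (the factor $2$ visibly comes from $\dim\mathrm{Char}(B)=2\dim B$), and your proposed arithmetic proof of linearity via spreading out, Frobenius dynamics on subtori, and Tate's theorem is in the spirit of the paper's \S\ref{ss:weights}; but it front-loads a substantial theorem that the paper deliberately avoids. The paper's route buys logical independence from linearity: the codimension estimate comes out of Hard Lefschetz plus a one-line $\mathrm{Ext}$-vanishing over a regular local ring, with no structure theorem for $S^i(A,M)$ required.
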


The converse to Theorem~\ref{thm:t-structure} is also true. In fact, both results can be deduced via the Riemann-Hilbert correspondence from \cite{Schnell:holonomic}; the latter relies on Simpson's work \cite{SimpsonSubspaces} analyzing when subsets of a moduli space of rank $1$ local systems are algebraic simultaneously in the Betti, de Rham, and Dolbeault realizations. The novelty of the approach taken here to proving Theorem~\ref{thm:t-structure} is that it is essentially topological: we deduce the improved codimension estimate formally from the Hard Lefschetz theorem and properties of the Fourier transform, without ever leaving the world of constructible sheaves.

\subsection{Linearity}

The final result we shall discuss is one that gives a precise local description of the Fourier transform $\mathrm{FM}_A(M)$ for a simple perverse sheaf $M$ of geometric origin on $A$. Roughly speaking, the result states that the stalk of $\mathrm{FM}_A(M)$ at the trivial character is given by a ``linear complex'' or a ``derivative complex". The precise statement is given in Theorem~\ref{thm:Linearity} below. For this, we denote by $S$ the co-ordinate ring of the formal completion of $\mathrm{Char}(A)$ at the trivial character $1 \in \mathrm{Char}(A)$. As $\mathrm{Char}(A) = \mathrm{Spec}(k[\pi_1(A)])$, we have $S \simeq \widehat{\mathrm{Sym}}(H_1(A,k))$. 

\begin{theorem}
\label{thm:LinearityIntro}
Let $A$ be an abelian variety, and let $k$ be a field of characteristic $0$. Let $M$ be a simple $k$-linear perverse sheaf on $A$ of geometric origin. Then the completed stalk at $1$ of $\mathrm{FM}_A(M) \in D^b_{coh}(\mathrm{Char}(A))$ is represented by the $S$-complex
\[ \cdots \to H^{i-1}(A,M) \otimes_k S \to H^i(A,M) \otimes_k S \to H^{i+1}(A,M) \otimes_k S \to \cdots, \]
where the differential arises from the natural map $H^i(A,M) \to H^{i+1}(A,M) \otimes_k H_1(A,k)$ that is adjoint to the cup product  $H^1(A,k) \otimes_k H^i(A,M) \to H^{i+1}(A,M)$.
\end{theorem}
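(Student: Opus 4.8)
The plan is to compute the completed stalk as a twisted complex, reduce the theorem to a formality statement, and establish that formality by a weight argument after reduction to finite fields. First I would identify the completed stalk: by the defining property of the Fourier--Mellin transform, $\mathrm{FM}_A(M)^{\wedge}_1$ computes $R\Gamma(A, M\otimes_k\mathcal P)$, where $\mathcal P$ is the universal deformation over $\mathrm{Spf}(S)$ of the trivial rank-one local system, i.e.\ the free $S$-module of rank one on which $\pi_1(A)$ acts through the tautological character $\pi_1(A)\to S^{\times}$, $\gamma\mapsto\gamma$. As $\pi_1(A)$ is abelian and $\mathrm{char}\,k=0$, this character is $\gamma\mapsto\exp(f_\gamma)$ with $f_\gamma$ the image of $\gamma$ under $H_1(A,k)\hookrightarrow\mathfrak m\subset S$; thus $\mathcal P$ is pro-unipotent, obtained from the trivial local system by twisting with a Maurer--Cartan element $\theta\in C^1(A;k)\,\widehat\otimes_k\,\mathfrak m$ whose class modulo $\mathfrak m^2$ is the canonical element of $H^1(A,k)\otimes_k H_1(A,k)=\mathrm{End}_k(H^1(A,k))$. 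Fixing a DG-model $C^{\bullet}(A;M)$ for $R\Gamma(A,M)$ as a module over a DG-model $C^{\bullet}(A;k)$ for $R\Gamma(A,k)$, one gets
\[ \mathrm{FM}_A(M)^{\wedge}_1\ \simeq\ \bigl(C^{\bullet}(A;M)\,\widehat\otimes_k\,S,\ d+\theta\cdot\bigr), \]
and the $\mathfrak m$-adic filtration of $\mathcal P$, whose graded pieces are the trivial local systems $\mathrm{Sym}^jH_1(A,k)$, identifies the associated spectral sequence with $E_1=H^{\bullet}(A,M)\otimes_k\mathrm{gr}\,S$ and with $d_1$ given by cup product with the tautological class, which is precisely the differential appearing in the statement. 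So the theorem is equivalent to the degeneration of this spectral sequence at $E_2$ together with the vanishing of the associated extension problems, that is, to formality of the pair $\bigl(C^{\bullet}(A;k), C^{\bullet}(A;M)\bigr)$: once that pair is connected to $\bigl(H^{\bullet}(A;k), H^{\bullet}(A;M)\bigr)$ (with zero differentials and the cup-product module structure) by a zig-zag of quasi-isomorphisms of DGAs and DG-modules, applying it commutes with the twisting construction and carries $\theta$ to the linear element $\sum_j(\cup e^j)\otimes(\,\cdot\, f_j)$, yielding the asserted complex. Conceptually this is the Bernstein--Gelfand--Gelfand/Koszul-duality relation between $H^{\bullet}(A,k)=\Lambda^{\bullet}H^1(A,k)$-modules and $S$-modules.

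Formality of the DGA $C^{\bullet}(A;k)$ by itself is classical, since $A$ is homotopy equivalent to a torus, whose cohomology is an exterior algebra generated in degree one; the substance is formality of $C^{\bullet}(A;M)$ as a module over it, and this is where the hypothesis that $M$ is simple of geometric origin is used. I would spread $A$ and $M$ out over a finitely generated $\mathbf{Z}$-algebra, specialize to obtain an abelian variety $A_0$ over a finite field $\mathbf{F}_q$ together with a pure perverse sheaf $M_0$ (pure because a simple perverse sheaf of geometric origin is a direct summand of a pure complex), and pass to $\ell$-adic coefficients. As $A_0$ is proper, Deligne's Weil~II together with Verdier duality shows that $H^i(A_0\otimes\overline{\mathbf{F}}_q, M_0)$ is pure of weight $w+i$, and $H^i(A_0\otimes\overline{\mathbf{F}}_q,\mathbf{Q}_\ell)$ is pure of weight $i$, so the Frobenius acts on the whole $\ell$-adic DG-module compatibly with its cohomology, with weights strictly increasing in cohomological degree. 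The familiar ``purity implies formality'' mechanism then applies: the weight filtration is strict for every higher $A_\infty$-operation, and on associated gradeds a weight count shows that all operations other than the cup product, as well as the differential, must vanish. This gives the formality over $\mathbf{F}_q$, and one transports it to characteristic $0$ because formality is detected on the $A_\infty$-structure carried by cohomology with its cup product, which is matched under the comparison between $\ell$-adic and Betti realizations, and because the output $S$-complex is manifestly realization-independent.

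The main obstacle is carrying out the weight argument in the \emph{relative} (module) setting --- producing a Frobenius-compatible minimal model and verifying strictness of the weight filtration on the relevant Massey-product groups --- together with the mostly routine bookkeeping needed to check that the $\mathfrak m$-adically completed twisted complex of the first step genuinely represents the completed stalk of $\mathrm{FM}_A(M)$ at $1$, and to reduce from a general characteristic-zero field $k$ to the arithmetic situation.
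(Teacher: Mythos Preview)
Your proposal is correct and follows essentially the same strategy as the paper: reduce the linearity statement to formality of $R\Gamma(A,M)$ as a module over $R\Gamma(A,k)\simeq\wedge^* H^1(A,k)$ via BGG/Koszul duality, then establish that formality by specializing to finite fields and invoking purity. The paper packages the module-formality step more cleanly by observing that the full $\infty$-subcategory $D_{\mathrm{pure},0}\subset D_{\mathrm{perf}}(k[t,t^{-1}])$ of weight-$0$ pure complexes is discrete and symmetric monoidal, so that every $E_1$-algebra and module therein is canonically formal---this sidesteps the Frobenius-compatible minimal model and Massey-product bookkeeping you correctly identify as the main obstacle.
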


Theorem~\ref{thm:LinearityIntro} is the constructible sheaf version of the key result of \cite{GreenLazarsfeldHigherObs} (which dealt with the coherent context), and is implied by the linearity result in \cite{PopaSchnellGVMHM} (which applies in the context of mixed Hodge modules); these results form the essential ingredient in proving linearity properties of the cohomological support loci $S^i(A,M)$ of $M$. The proofs of these results in both \cite{GreenLazarsfeldHigherObs} and \cite{PopaSchnellGVMHM} rely on Hodge theory. In contrast, our proof is essentially arithmetic: we deduce Theorem~\ref{thm:LinearityIntro} by specializing to characteristic $p$, and using the theory of weights to prove a formality result that implies the theorem via a version of the BGG correspondence.

\subsection*{Acknowledgements}
We would like to thank Jacob Lurie for a very helpful conversation about \S
\ref{ss:weights}. During the preparation of this work, Bhatt was partially supported
by the NSF Grant DMS \#1501461 and a Packard fellowship, Schnell was partially
supported  by NSF grant DMS\#1404947 and by a Centennial Fellowship from the AMS, and
Scholze was supported by a Clay research fellowship. Both Bhatt and Scholze would
like to thank the University of California, Berkeley, the MSRI, and the Clay
foundation for their hospitality and support during the initiation of this project.
Both Bhatt and Schnell would like to thank the Simons Center for Geometry and Physics
for their hospitality during the program ``Complex, $p$-adic, and logarithmic Hodge
theory and their applications''.

\section{Fourier-Mellin transforms and generic vanishing}
\label{sec:FMTori}

In this section, we prove Theorem~\ref{thm:GV}. We use the formalism of constructible complexes and perverse sheaves in the setting of complex manifolds; more details can be found in \cite[Section~4.5]{Hotta+Takeuchi+Tanisaki:Book} and \cite[Chapter~4]{Dimca:Sheaves}. Many of the basic compatibility results proven below concerning the Fourier-Mellin transforms on compact complex tori are analogs of analogous results for $\ell$-adic sheaves on algebraic tori proven by Gabber-Loeser \cite{GabberLoeser}. We fix the following notation: 

\begin{notation}
Fix a field $k$, and a complex torus $A$ of dimension $g$. Write $D^b_c(A,k)$ for the bounded derived category of constructible complexes of $k$-modules on $A$; this triangulated category comes equipped with the constant sheaf abusively denoted by $k$, and Verdier's duality functor $D_{A,k}(-) := \underline{\mathrm{RHom}}_k(-, k[2g])$. 

Let $R = k[\pi_1(A)]$ be the group algebra of the fundamental group $\pi_1(A)$ of $A$; we always choose the base point at $0$. Note that $R$ is a regular noetherian ring, and $\mathrm{Char}(A) := \mathrm{Spec}(R)$ is the character variety of $A$ (relative to $k$). As before, one has the corresponding constructible derived category $D^b_c(A,R)$ of $R$-modules on $A$ and its Verdier duality functor $D_{A,R}(-) := \underline{\mathrm{RHom}}_R(-,R[2g])$. 

Let $\pi:V \to A$ be the universal cover of $A$, so $V$ is a vector space, and there is a natural action of $\pi_1(A)$ on $V$ by deck transformations. In particular, the sheaf $\mathcal{L}_R := \pi_! k$ is naturally a sheaf of $R$-modules on $A$. 
\end{notation}

The sheaf $\mathcal{L}_R$ introduced above is a central player in this work. It may be viewed as the ``universal'' rank $1$ local system on $A$ in the following sense:
 
 \begin{lemma}
The $R$-module $\mathcal{L}_R$ is locally free of rank $1$. In fact, it is the $R$-local system associated to the tautological character $can:\pi_1(A) \to R^*$.
 \end{lemma}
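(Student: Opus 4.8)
The plan is to reduce the statement to a local computation on $A$, exploiting that $\pi\colon V \to A$ is a covering space with discrete deck group $\pi_1(A)$. First I would make the $R$-module structure on $\mathcal{L}_R = \pi_! k$ explicit: since every deck transformation $g \in \pi_1(A)$ satisfies $\pi \circ g = \pi$, functoriality of $\pi_!$ endows $\mathcal{L}_R$ with a $\pi_1(A)$-action, and extending $k$-linearly produces the $R$-action. (Throughout one uses freely that $R$ is commutative, because $\pi_1(A) \cong \mathbf{Z}^{2g}$ is abelian, so there is no left/right ambiguity to worry about.) Since $\pi$ is a local homeomorphism, $\mathcal{L}_R$ is a genuine sheaf concentrated in degree $0$ with stalk $(\mathcal{L}_R)_a = \bigoplus_{v \in \pi^{-1}(a)} k$ at each $a \in A$; and local freeness of rank $1$ may be checked locally on $A$, so it then suffices to work over an open $U \subseteq A$ over which $\pi$ is trivial.

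Over such a $U$, I would pick a continuous section $s\colon U \to \pi^{-1}(U)$ of $\pi$; then $(u,g) \mapsto g\cdot s(u)$ is a homeomorphism $U \times \pi_1(A) \xrightarrow{\ \sim\ } \pi^{-1}(U)$ carrying left translation on the $\pi_1(A)$-factor to the deck action. Because the fibre is discrete, base change along $U \hookrightarrow A$ then gives
\[ \mathcal{L}_R|_U \;=\; \pi_! k|_U \;\cong\; \bigoplus_{g \in \pi_1(A)} k_U, \]
with the $R$-action permuting the summands by translation of the index set; sending $1 \in R$ to the summand indexed by $e \in \pi_1(A)$ thereby identifies $\mathcal{L}_R|_U$ with the constant sheaf $R_U$ with value $R$, as a sheaf of $R$-modules. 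This proves that $\mathcal{L}_R$ is a rank-one $R$-local system on $A$.

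Finally I would identify its monodromy, since a rank-one $R$-local system on the connected space $A$ is determined by its monodromy representation $\pi_1(A,0) \to R^\times = \mathrm{GL}_1(R)$. Fixing a lift $\tilde 0 \in V$ of the base point $0$ identifies $\pi^{-1}(0)$ with $\pi_1(A,0)$ via $g \mapsto g\cdot\tilde 0$, hence the stalk $(\mathcal{L}_R)_0 = \bigoplus_{v\in\pi^{-1}(0)} k$ with $R$. For a loop $\gamma$ based at $0$, its lift starting at $g\cdot\tilde 0$ is the $g$-translate of the lift starting at $\tilde 0$, hence ends at $g\gamma\cdot\tilde 0$; so parallel transport along $\gamma$ acts on $\pi^{-1}(0) \cong \pi_1(A,0)$ by translation by $\gamma$, i.e.\ on $(\mathcal{L}_R)_0 \cong R$ by multiplication by the unit $\gamma \in R^\times$. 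That is exactly the tautological character $can$, which is what the lemma asserts.

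The argument is entirely elementary, so there is no substantial obstacle; the only point that genuinely requires care is bookkeeping with conventions. In particular, one must use \emph{compact} supports in $\pi_!$, so that the stalk comes out as the group algebra $R = k[\pi_1(A)]$ itself rather than a completion or a dual (which is what $\pi_*$ would give, the fibre $\pi_1(A)$ being infinite); and one must track base points and the direction of parallel transport carefully so that the monodromy is identified with $can$ on the nose and not with $can^{-1}$.
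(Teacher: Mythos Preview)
Your proof is correct and follows essentially the same route as the paper: trivialize the universal cover over a simply connected (equivalently, evenly covered) open $U \subset A$, use proper base change to identify $(\pi_! k)|_U$ with the constant sheaf $R$, and then read off the monodromy. The paper dispatches the second part in a single phrase (``unwinding definitions''), whereas you spell out the parallel transport computation explicitly; that extra care is fine but does not constitute a different argument.
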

 \begin{proof}
Let $U \subset A$ be a  simply connected open subset. Then $\tilde{A} \times_A U \to U$ is isomorphic to $\sqcup_{\pi_1(A)} U \to U$. By proper base change, it follows that $(\pi_! k)|_U$ is identified with the constant sheaf $R$ as $R$-modules, proving the first part. The second part follows by unwinding definitions.
 \end{proof}

As $\mathcal{L}_R$ is a local system, we may dualize it to obtain another local system $\mathcal{L}^\vee_R := \underline{\mathrm{RHom}}_R(\mathcal{L}_R, R)$ of $R$-modules on $A$. One then has:

\begin{lemma}
\label{lem:DualTautological}
There is a canonical identification $[-1]^* \mathcal{L}_R \simeq \mathcal{L}_R^\vee$.
\end{lemma}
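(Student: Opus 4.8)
The plan is to identify \emph{both} sides with the canonical rank-one $R$-local system $\mathcal{L}_{can^{-1}}$ attached to the inverse tautological character $can^{-1}\colon \pi_1(A)\to R^\ast$, $\gamma\mapsto \gamma^{-1}$, and then invoke uniqueness of local systems with prescribed monodromy (recall that rank-one $R$-local systems equipped with a trivialization of the stalk at $0$ are canonically the same as characters of $\pi_1(A,0)$, and that $\mathcal{L}_R$ carries such a trivialization coming from the basepoint of $V$ above $0$). For the dual: since $\mathcal{L}_R$ is locally free of rank $1$ over $R$ by the previous lemma, the complex $\underline{\mathrm{RHom}}_R(\mathcal{L}_R, R)$ is concentrated in degree $0$ and is again a rank-one $R$-local system; on a simply connected open $U\subset A$, dualizing the trivialization $\mathcal{L}_R|_U\simeq R$ turns the transition character $can$ into $can^{-1}$, and this is compatible on overlaps, so $\mathcal{L}_R^\vee \simeq \mathcal{L}_{can^{-1}}$ canonically.

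Next I would compute $[-1]^*\mathcal{L}_R$. Pullback of a rank-one local system along a map $f$ replaces its monodromy character $\chi$ by $\chi\circ f_\ast$, so $[-1]^*\mathcal{L}_R\simeq \mathcal{L}_{can\circ [-1]_\ast}$. It therefore remains to observe $[-1]_\ast=\mathrm{inv}$ on $\pi_1(A)$: this holds because $[-1]$ is a group endomorphism of $A$ (apply the Eckmann--Hilton argument to the abelian group $\pi_1(A)$), or, concretely, because $[-1]$ lifts to $v\mapsto -v$ on $V$, which sends a lattice element $\lambda\in \Lambda=\ker(V\to A)\cong \pi_1(A)$ to $-\lambda$. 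Hence $[-1]^*\mathcal{L}_R\simeq \mathcal{L}_{can^{-1}}$ as well, and combining with the previous paragraph produces the desired canonical isomorphism $[-1]^*\mathcal{L}_R\simeq \mathcal{L}_R^\vee$. I expect no serious difficulty in the argument; the only point that needs care is keeping track of the $R$-module structures, which is exactly the elementary identity $[-1]_\ast=\mathrm{inv}$ above.

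For a manifestly functorial form of the isomorphism (convenient for the later compatibilities) I would instead argue directly on the universal cover: the lift $-1\colon V\to V$ of $[-1]$ fits into a Cartesian square with both vertical maps equal to $\pi$ and top map $-1$ (this is immediate since $[-1]$ is an isomorphism), so the base change isomorphism gives $[-1]^*\mathcal{L}_R=[-1]^*\pi_!k\xrightarrow{\ \sim\ }\pi_!(-1)^*k=\pi_!k=\mathcal{L}_R$ of sheaves of $k$-modules. Because $-(v+\lambda)=(-v)+(-\lambda)$, the lift $-1$ intertwines the deck action on the source with the deck action \emph{twisted by $\mathrm{inv}$} on the target, so this $k$-linear isomorphism is $\mathrm{inv}$-semilinear over $R$; that is, it identifies $[-1]^*\mathcal{L}_R$ with $\mathcal{L}_{can^{-1}}$, hence with $\mathcal{L}_R^\vee$ by the first paragraph. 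Either route finishes the proof, with the only substantive input being the behaviour of the deck action under the linear involution $v\mapsto -v$.
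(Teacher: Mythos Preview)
Your proof is correct and follows essentially the same approach as the paper: both arguments identify the monodromy characters of $\mathcal{L}_R^\vee$ and $[-1]^*\mathcal{L}_R$ with $can^{-1}$, using that dualizing inverts the character and that $[-1]_*$ acts by inversion on $\pi_1(A)$. Your write-up is slightly more detailed (and the alternative base-change argument is a nice bonus), but the underlying idea is identical to the paper's.
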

\begin{proof}
By the previous lemma, $\mathcal{L}_R$ is the local system associated to the tautological character $can:\pi_1(A) \to R^*$. Thus, its dual $\mathcal{L}_R^\vee$ corresponds to the character 
\[ \pi_1(A) \xrightarrow{can} R^* \xrightarrow{\iota} R^*,\]
where $\iota(g) = g^{-1}$. Since $[-1]_*$ acts via $-1$ on $\pi_1(A)$, the previous composition is identified with
\[ \pi_1(A) \xrightarrow{[-1]_*} \pi_1(A) \xrightarrow{can} R^*,\]
which proves the claim.
\end{proof}

This construction satisfies the following  compatibility:

\begin{lemma}
\label{lem:DualityTautological}
The following diagram is canonically commutative:
\[ \xymatrix{ D^b_c(A,k) \ar[rr]^-{\mathcal{L}_R \otimes_k (-)} \ar[d]_-{D_{A,k}(-)} & & D^b_c(A,R) \ar[d]^-{D_{A,R}(-)} \\
		  D^b_c(A,k) \ar[rr]^-{\mathcal{L}^\vee_R \otimes_k (-)} && D^b_c(A,R). }\]
Here the vertical maps are antiequivalences.
\end{lemma}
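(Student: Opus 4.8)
The plan is to verify the commutativity by unwinding both composite functors and reducing to a statement about the single local system $\mathcal{L}_R$, using the projection formula and the compatibility of Verdier duality with local systems. First I would note that $\mathcal{L}_R \otimes_k (-)$ is exact (tensoring with a local system, viewed over $k$, is $t$-exact for the standard $t$-structure) and commutes with arbitrary shifts, so it suffices to compare the two functors $D^b_c(A,k) \to D^b_c(A,R)$ directly. For $M \in D^b_c(A,k)$, I would compute
\[ D_{A,R}\big( \mathcal{L}_R \otimes_k M \big) = \underline{\mathrm{RHom}}_R\big( \mathcal{L}_R \otimes_k M, R[2g] \big) \simeq \underline{\mathrm{RHom}}_k\big( M, \underline{\mathrm{RHom}}_R(\mathcal{L}_R, R)[2g] \big) \simeq \mathcal{L}_R^\vee \otimes_k \underline{\mathrm{RHom}}_k(M, k[2g]), \]
where the middle step is the tensor-hom adjunction relating the $k$-linear and $R$-linear structures (using that $M$ is $k$-linear and $\mathcal{L}_R$ is $R$-linear), and the last step uses that $\mathcal{L}_R^\vee$ is a local system of $R$-modules that is locally free of rank $1$, hence dualizing commutes with the remaining $\underline{\mathrm{RHom}}_k(M,-)$. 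The right-hand side is precisely $\mathcal{L}_R^\vee \otimes_k D_{A,k}(M)$, which gives the commutativity.

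The key steps, in order, are: (1) reduce to comparing functors without worrying about shifts, using $t$-exactness of $\mathcal{L}_R \otimes_k (-)$; (2) apply the adjunction $\underline{\mathrm{RHom}}_R(\mathcal{L}_R \otimes_k M, R) \simeq \underline{\mathrm{RHom}}_k(M, \underline{\mathrm{RHom}}_R(\mathcal{L}_R, R))$, which is the heart of the argument; (3) commute the rank-one local system $\mathcal{L}_R^\vee$ past $\underline{\mathrm{RHom}}_k(M, -)$; and (4) check that all identifications are natural in $M$, so that the diagram is \emph{canonically} commutative and not merely commutative up to non-canonical isomorphism. The final claim that the vertical maps are antiequivalences is standard: $D_{A,k}$ is an antiequivalence of $D^b_c(A,k)$ by biduality, and $D_{A,R}$ is likewise an antiequivalence since $R$ is regular noetherian and $R[2g]$ is a dualizing complex, so $\mathcal{L}_R$ (being perfect over $R$) is reflexive.

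The main obstacle I expect is step (2): one must be careful about which structure ($k$-linear versus $R$-linear) the various $\underline{\mathrm{RHom}}$ and $\otimes$ are taken over, and verify that the adjunction isomorphism is sheaf-theoretically valid and natural. Concretely, $\mathcal{L}_R \otimes_k M$ carries its $R$-module structure entirely through the $\mathcal{L}_R$ factor, and one needs $\underline{\mathrm{RHom}}_R(\mathcal{L}_R \otimes_k M, R) \simeq \underline{\mathrm{RHom}}_k(M, \underline{\mathrm{RHom}}_R(\mathcal{L}_R, R))$ as sheaves of $R$-modules; since $M$ is a complex of $k$-sheaves with no $R$-structure of its own, this is the expected base-change/adjunction formula, but it deserves an explicit check using that $\mathcal{L}_R$ is locally free of rank $1$ over $R$ (Lemma in the excerpt), which makes everything reduce, locally on $A$, to the trivial case $\mathcal{L}_R|_U \simeq R$. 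Once this local check is in hand, gluing and naturality are routine, and invoking Lemma~\ref{lem:DualTautological} is not even needed for this statement — it is a separate compatibility — so the proof is self-contained given the locally-free-of-rank-one property of $\mathcal{L}_R$.
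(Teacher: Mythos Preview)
Your proof is correct and follows essentially the same approach as the paper: both arguments write down the natural comparison map $\mathcal{L}_R^\vee \otimes_k \underline{\mathrm{RHom}}_k(M,k[2g]) \to \underline{\mathrm{RHom}}_R(\mathcal{L}_R \otimes_k M, R[2g])$, localize on $A$ to trivialize the rank-one local system $\mathcal{L}_R$, and then reduce to the standard compatibility of $\underline{\mathrm{RHom}}$ with extension of scalars along $k \to R$. Your intermediate use of tensor--hom adjunction in step (2) is just a slightly more explicit bookkeeping of the same reduction; your step (1) about $t$-exactness and shifts is unnecessary (the $[2g]$'s match up directly in the definitions), but it does no harm.
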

\begin{proof}
Fix some $K \in D^b_c(A,k)$. We must show that the natural map
\[ \mathcal{L}_R^\vee \otimes_k \underline{\mathrm{RHom}}_k(K,k[2g]) \to   \underline{\mathrm{RHom}}_R(\mathcal{L}_R \otimes_k K, R[2g]) \]
is an isomorphism of sheaves. This assertion is local on $A$. Moreover, since $\mathcal{L}_R$ is locally constant, it reduces to the following statement: the functor $D^b_c(A,k) \to D^b_c(A,R)$ given by extension of scalars along $k \to R$ carries $\underline{\mathrm{RHom}}_k(F,G)$ to $\underline{\mathrm{RHom}}_R(F \otimes_k R, G \otimes_k R)$. This assertion is standard.
\end{proof}

Recall that the functor $R\Gamma(A,-)$ carries $D^b_c(A,R)$ to $D_{perf}(R)$, and, as $A$ is compact, intertwines Verdier duality on $D^b_c(A,R)$ with the trivial duality $D_R(-) := \mathrm{RHom}_R(-,R)$ on $D_{perf}(R)$. Combining this with Lemma~\ref{lem:DualityTautological}, we obtain the following commutative diagram of functors:
\begin{equation}
\label{eq:DualityDiagram}
 \xymatrix{ D^b_c(A,k) \ar[rr]^-{\mathcal{L}_R \otimes_k (-)} \ar[d]_-{D_{A,k}(-)} & & D^b_c(A,R) \ar[d]^-{D_{A,R}(-)} \ar[rr]^-{R\Gamma(A,-)} && D_{perf}(R) \ar[d]^-{D_R(-)}  \\
		  D^b_c(A,k) \ar[rr]^-{\mathcal{L}^\vee_R \otimes_k (-)} && D^b(A,R) \ar[rr]^-{R\Gamma(A,-)} && D_{perf}(R). }
 \end{equation}
In particular, using Lemma~\ref{lem:DualTautological}, we arrive at the following compatibility:
\begin{equation}
\label{eq:DualityFM}
D_R\Big(R\Gamma(A, M \otimes_k \mathcal{L}_R)\Big) \simeq R\Gamma(A, D( [-1]^* M) \otimes_k \mathcal{L}_R).
\end{equation}
This allows us to define: 

\begin{definition}
For $M \in D^b_c(A,k)$, define its {\em Fourier transform} $\mathrm{FM}_A(M)$ as 
\[ \mathrm{FM}_A(M) := R\Gamma(A, M \otimes_k \mathcal{L}_R) \in D_{perf}(R).\]
\end{definition}

To justify the name, we show the following: for any $M \in D^b_c(A,k)$, the quasi-coherent complex $\mathrm{FM}_A(M)$ on $\mathrm{Char}(A)$ interpolates the cohomology of the character twists of $M$, as one would expect from a Fourier-type transform. More precisely, given a point $\chi \in \mathrm{Char}(A)$ with residue field $\kappa(\chi)$, we obtain an induced character $\tilde{\chi}:\pi_1(A) \xrightarrow{can} R^* \to \kappa(\chi)^*$, and thus an associated rank $1$ local system $L_\chi$ of $\kappa(\chi)$-modules on $A$. One then has:

\begin{lemma}
\label{lem:FibersFM}
For any $\chi \in \mathrm{Char}(A)$, there is a canonical isomorphism
\[ \mathrm{FM}_A(M) \otimes^L_R \kappa(\chi) \simeq R\Gamma(A, M \otimes_k L_\chi).\]
\end{lemma}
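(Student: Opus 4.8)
The plan is to reduce the statement to a base-change result for the derived global sections functor. Recall that $\mathrm{FM}_A(M) = R\Gamma(A, M \otimes_k \mathcal{L}_R)$, so we must identify $R\Gamma(A, M \otimes_k \mathcal{L}_R) \otimes^L_R \kappa(\chi)$ with $R\Gamma(A, M \otimes_k L_\chi)$. The first step is to move the base change inside the global sections functor. Since $A$ is compact, $R\Gamma(A, -)$ commutes with arbitrary (homotopy) colimits, and in particular with derived tensor by the perfect $R$-complex $\kappa(\chi)$ over the (constant sheaf of) ring $R$; concretely, one writes $\kappa(\chi)$ as a finite complex of finite free $R$-modules and uses that $R\Gamma(A,-)$ is exact and additive to get
\[ R\Gamma(A, M \otimes_k \mathcal{L}_R) \otimes^L_R \kappa(\chi) \simeq R\Gamma\big(A, (M \otimes_k \mathcal{L}_R) \otimes^L_R \kappa(\chi)\big), \]
where on the right $\kappa(\chi)$ is regarded as a constant sheaf of $R$-algebras. (Alternatively: the projection formula for $R\Gamma(A,-)$ applied to the $R$-module structure.)

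The second step is purely local on $A$: I claim there is a canonical isomorphism of sheaves $(M \otimes_k \mathcal{L}_R) \otimes^L_R \kappa(\chi) \simeq M \otimes_k L_\chi$. Since $M$ is a complex of $k$-sheaves, the derived tensor over $R$ only interacts with the $\mathcal{L}_R$ factor, so this reduces to $\mathcal{L}_R \otimes^L_R \kappa(\chi) \simeq L_\chi$. By the first lemma in the excerpt, $\mathcal{L}_R$ is the $R$-local system attached to the tautological character $can\colon \pi_1(A) \to R^*$; base-changing a local system attached to a character $\rho\colon \pi_1(A)\to R^*$ along a ring map $R \to \kappa(\chi)$ produces the local system attached to the composite $\pi_1(A)\to R^* \to \kappa(\chi)^*$, and by definition this composite is exactly $\tilde\chi$, whose associated local system is $L_\chi$. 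Moreover $\mathcal{L}_R$ being locally free of rank one means $\mathcal{L}_R \otimes^L_R \kappa(\chi)$ is already concentrated in degree zero, so no higher Tor terms appear and the derived tensor agrees with the underived one. Combining the two steps yields the asserted isomorphism, and canonicity is visible because every identification used (the projection formula, the local triviality of $\mathcal{L}_R$, the description of base change of character local systems) is functorial in $M$ and natural in $\chi$.

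The main obstacle is getting the first step right, i.e. justifying that derived global sections commute with the derived base change $-\otimes^L_R \kappa(\chi)$. This is where compactness of $A$ is essential: for a non-proper space $R\Gamma$ need not commute with infinite direct sums, but here one only needs it to commute with the finite homotopy colimit presenting $\kappa(\chi)$ as a perfect $R$-complex, which is automatic. The cleanest formulation is via the projection formula $R\Gamma(A, \mathcal{F} \otimes^L_R N) \simeq R\Gamma(A,\mathcal{F}) \otimes^L_R N$ for $\mathcal{F} \in D^b_c(A,R)$ and $N \in D_{perf}(R)$, which is a standard consequence of the case $N = R$ together with the fact that both sides are exact triangulated functors of $N$ commuting with shifts and finite direct sums, hence agree on all perfect complexes by dévissage. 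One should also remark that $M \otimes_k \mathcal{L}_R$ genuinely lies in $D^b_c(A,R)$ and that $R\Gamma$ lands in $D_{perf}(R)$, as recalled in the excerpt, so all the objects in sight are the expected ones. Everything else is formal bookkeeping.
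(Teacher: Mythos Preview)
Your proof is correct and follows essentially the same route as the paper: apply the projection formula for $A \to \ast$ to move $\otimes^L_R \kappa(\chi)$ inside $R\Gamma$, and then identify $\mathcal{L}_R \otimes_R \kappa(\chi)$ with $L_\chi$ using that $\mathcal{L}_R$ is the local system attached to the tautological character. The paper's version is terser---it simply invokes the projection formula and the description of $\mathcal{L}_R$---while you unpack the projection formula via the perfectness of $\kappa(\chi)$ over $R$; the extra care is harmless, though note that your aside about $R\Gamma(A,-)$ commuting with \emph{arbitrary} colimits is stronger than needed (and not literally true without care), whereas the finite-colimit version you actually use is automatic.
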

\begin{proof}
By the projection formula for $A \to \ast$, the left side identifies with 
\[ R\Gamma(A, M \otimes_k \mathcal{L}_R \otimes_R \kappa(\chi)).\]
As $\mathcal{L}_R$ is the $R$-local system associated to $\pi_1(A) \xrightarrow{can} R^*$, the base change $\mathcal{L}_R \otimes_R \kappa(\chi)$ is the $\kappa(\chi)$-local system associated to $\pi_1(A) \xrightarrow{can} R^* \to \kappa(\chi)^*$. But the latter is clearly also simply $L_\chi$, proving the claim.
\end{proof}

The key assertion responsible for Theorem~\ref{thm:GV} is:

\begin{proposition}
\label{prop:PerverseDualCoconnective}
The functor  $\mathrm{FM}_A(-)$ carries ${}^p D^{\geq 0}(A,k)$ into $D^{\geq 0}(R)$; the functor  $D_R(\mathrm{FM}_A(-))$ carries ${}^p D^{\leq 0}(A,k)$ into $D^{\geq 0}(R)$. In particular, if $M \in \mathrm{Perv}(A,k)$, then both $\mathrm{FM}_A(M)$ and $D_R(\mathrm{FM}_A(M))$ lie in $D^{\geq 0}(R)$. 
\end{proposition}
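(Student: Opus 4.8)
The plan is to reduce the statement to Artin-type vanishing on the universal cover $\pi : V \to A$, where $V$ is a Stein manifold (indeed a complex vector space). The key point is that the Fourier–Mellin transform $\mathrm{FM}_A(M) = R\Gamma(A, M \otimes_k \mathcal{L}_R)$ can be rewritten, via $\mathcal{L}_R = \pi_! k$ and the projection formula, as $R\Gamma(A, M \otimes_k \pi_! k) \simeq R\Gamma(A, \pi_!\pi^* M) \simeq R\Gamma(V, \pi^* M)$, now viewed as an object of $D(R)$ via the $\pi_1(A)$-action on $V$ by deck transformations. So the claim "$\mathrm{FM}_A(M) \in D^{\geq 0}(R)$ for $M$ perverse" becomes "$R\Gamma_c$ or $R\Gamma$ of the pulled-back perverse sheaf on $V$ is concentrated in non-negative degrees", and this is exactly the shape of Artin vanishing.

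The steps, in order. First, I would establish the identification $\mathrm{FM}_A(M) \simeq R\Gamma(V, \pi^* M)$ in $D(R)$ carefully, tracking the $R$-module structure; since $\pi$ has discrete fibers, $\pi^*M = \pi^! M$ up to the (trivial, because $V \to A$ is étale in the topological sense) shift, and $\pi^* M$ is again perverse on $V$ because pullback along a local homeomorphism is $t$-exact for the perverse $t$-structure. Second — and this is the crux — I would invoke Artin's vanishing theorem for perverse sheaves on a Stein manifold: if $N \in \mathrm{Perv}(V,k)$ on a Stein space $V$ of dimension $g$, then $H^i(V, N) = 0$ for $i < 0$ and $H^i_c(V,N) = 0$ for $i > 0$. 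Applied to $N = \pi^* M$, the first statement gives $R\Gamma(V, \pi^*M) \in D^{\geq 0}$, hence $\mathrm{FM}_A(M) \in D^{\geq 0}(R)$, which is the first assertion of the Proposition (after checking that the $t$-structure on $D(R)$ is detected by underlying cohomology, i.e. forgetting the $R$-module structure, which is immediate). Third, for the dual statement, I would combine the compatibility \eqref{eq:DualityFM}, namely $D_R(\mathrm{FM}_A(M)) \simeq R\Gamma(A, D([-1]^*M) \otimes_k \mathcal{L}_R) = \mathrm{FM}_A(D([-1]^*M))$, with the observation that if $M \in {}^pD^{\leq 0}(A,k)$ then $[-1]^*M \in {}^pD^{\leq 0}$ (as $[-1]$ is an isomorphism) and hence $D([-1]^*M) \in {}^pD^{\geq 0}(A,k)$; applying the already-proven first assertion to $D([-1]^*M)$ yields $D_R(\mathrm{FM}_A(M)) \in D^{\geq 0}(R)$. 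The "in particular" for $M$ perverse is then the conjunction of the two cases.

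The main obstacle is the second step: one needs the correct form of Artin vanishing on a Stein manifold for perverse sheaves, in the analytic/constructible setting over an arbitrary coefficient field $k$, and one must make sure it applies to $\pi^* M$ which is constructible but \emph{not} with respect to a nice algebraic stratification (the stratification is the pullback of that of $M$, which is $\pi_1(A)$-invariant but typically not finite). The standard reference (Kashiwara–Schapira) gives $H^j(V, \mathcal{F}) = 0$ for $j > \dim V$ when $V$ is Stein and $\mathcal{F}$ constructible, and dually for $H^j_c$; feeding in the perverse amplitude of $\pi^* M$ relative to the $\mathbb{R}$-constructible stratification then gives the desired bound. A secondary point to be careful about is the passage between $R\Gamma$ and $R\Gamma_c$ on the non-compact space $V$: the projection formula computation naturally produces $\pi_! = R\Gamma_c$ along fibers followed by $R\Gamma(A,-)$, and one must verify this agrees with $R\Gamma(V, \pi^*M)$ — it does, because the $\pi_1(A)$-action makes $\pi$ a covering map so $R\Gamma(A, \pi_! \pi^* M)$ computes $R\Gamma$ of $V$ with its $R$-action, but keeping both the homology-vanishing ($R\Gamma_c$) and cohomology-vanishing ($R\Gamma$) versions straight is where sign/duality bookkeeping can go wrong and should be done explicitly.
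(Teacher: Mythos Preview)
Your approach is exactly the paper's: reduce the second assertion to the first via the duality formula \eqref{eq:DualityFM}, rewrite $\mathrm{FM}_A(M)$ via the projection formula as cohomology of $\pi^*M$ on the universal cover $V$, and invoke Artin vanishing on the Stein space $V$.

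However, you have committed precisely the bookkeeping error you warned yourself about in your final paragraph. The projection formula gives
\[
R\Gamma(A, \pi_! \pi^* M) \simeq R\Gamma_c(V, \pi^* M),
\]
not $R\Gamma(V, \pi^* M)$: since $A$ is compact one has $R\Gamma(A,-) = R\Gamma_c(A,-)$, and composing with $\pi_!$ yields $R\Gamma_c(V,-)$. Your claim that ``$R\Gamma(A,\pi_!\pi^*M)$ computes $R\Gamma$ of $V$'' is false. Correspondingly, your statement of Artin vanishing has the roles of $R\Gamma$ and $R\Gamma_c$ reversed: on a Stein manifold $V$, it is $R\Gamma_c(V,-)$ that carries ${}^p D^{\geq 0}$ into $D^{\geq 0}$, while $R\Gamma(V,-)$ carries ${}^p D^{\leq 0}$ into $D^{\leq 0}$. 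You have made the same swap twice, so the errors cancel and the conclusion is correct, but each individual step as written is wrong. Once these two swaps are corrected your argument is verbatim the paper's proof, which cites \cite[Theorem~10.3.8]{Kashiwara+Schapira:Sheaves} for the Artin vanishing step; this reference also addresses your worry about constructibility of $\pi^*M$, as the result there is stated for $\mathbf{R}$-constructible sheaves.
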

\begin{proof}
By equation~\eqref{eq:DualityFM}, it suffices to show $\mathrm{FM}_A(-)$ carries ${}^p D^{\geq 0}(A,k)$ into $D^{\geq 0}(R)$. For this, recall that $\mathcal{L}_R := \pi_! k$, where $\pi:V \to A$ is the universal cover. The projection formula gives 
\[ \mathrm{FM}_A(M) := R\Gamma(A, \mathcal{L}_R \otimes_k M) \simeq R\Gamma_c(V, \pi^* M)\]
 for any $M \in D^b_c(A,k)$. Now if $M \in { }^p D^{\geq 0}(A,k)$, then $\pi^*(M) \in {}^p D^{\geq 0}(V,k)$. Artin vanishing on the Stein space $V$ (see \cite[Theorem~10.3.8]{Kashiwara+Schapira:Sheaves}) implies that $R\Gamma_c(V,-)$ carries ${}^p D^{\geq 0}_c(V,k)$ into $D^{\geq 0}(k)$, proving the claim. 
\end{proof}

To pass from Proposition~\ref{prop:PerverseDualCoconnective} to the classical generic vanishing theorem, we recall the following (well-known) result in commutative algebra:

\begin{lemma}
\label{lem:CommAlgDuality}
Say $S$ is a noetherian ring with a dualizing complex $\omega_S^\bullet$, normalized so that the dualizing sheaf sits in cohomological degree $-\dim(S)$. Fix $M \in D^b_{coh}(S)$ and some integer $k$. Then the dual $D^\bullet_S(M) := \mathrm{RHom}_S(M, \omega_S^\bullet)$ lives in $D^{\geq -k}(S)$ if and only if $\dim(\mathrm{Supp}\ H^i(M)) \leq k-i$ for all $i$
\end{lemma}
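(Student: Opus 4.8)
The plan is to reduce the global statement to a local one at each point of $\mathrm{Spec}(S)$, and then invoke the local-duality characterization of depth/dimension via local cohomology. First I would recall that $\mathrm{RHom}_S(-, \omega_S^\bullet)$ is a duality on $D^b_{coh}(S)$, so the condition $D_S^\bullet(M) \in D^{\geq -k}(S)$ can be checked on stalks (indeed on completions): $D_S^\bullet(M) \in D^{\geq -k}(S)$ if and only if, for every prime (equivalently, every maximal) $\mathfrak{p}$, the complex $\mathrm{RHom}_{S_\mathfrak{p}}(M_\mathfrak{p}, \omega_{S_\mathfrak{p}}^\bullet) = (D_S^\bullet M)_\mathfrak{p}$ has cohomology only in degrees $\geq -k$. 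Here I use that $\omega_S^\bullet$ localizes to a dualizing complex $\omega_{S_\mathfrak{p}}^\bullet$, correctly normalized so that its lowest nonzero cohomology sits in degree $-\dim S_\mathfrak{p}$ — this normalization bookkeeping is the one place where one must be careful, since the degree shift depends on $\dim(S/\mathfrak{p})$.

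Next, I would translate the cohomological amplitude of a dual complex into a statement about local cohomology. The standard tool is the formula relating $\mathrm{RHom}$ against a dualizing complex to Matlis duality of local cohomology: for a local ring $(R,\mathfrak{m})$ with dualizing complex $\omega_R^\bullet$ and $N \in D^b_{coh}(R)$, one has $\mathrm{RHom}_R(N,\omega_R^\bullet) \simeq \mathrm{RHom}_R(R\Gamma_\mathfrak{m}(N), E)$ where $E$ is the injective hull of the residue field (this is Grothendieck local duality in its derived form; see e.g.\ Hartshorne's \emph{Residues and Duality} or Stacks Project, Dualizing Complexes). Since Matlis duality $\mathrm{RHom}_R(-,E)$ is exact and reflects vanishing, $\mathrm{RHom}_R(N,\omega_R^\bullet) \in D^{\geq -k}(R)$ is equivalent to $R\Gamma_\mathfrak{m}(N) \in D^{\leq k}(R)$, i.e.\ $H^j_\mathfrak{m}(N) = 0$ for $j > k$.

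Finally, I would assemble the pieces. Applying this at $N = M_\mathfrak{p}$ over $R = S_\mathfrak{p}$, the condition becomes: $H^j_{\mathfrak{p}S_\mathfrak{p}}(M_\mathfrak{p}) = 0$ for $j > k - \dim(S/\mathfrak{p})$, for every $\mathfrak{p}$ (the shift $\dim(S/\mathfrak{p})$ coming from the normalization of $\omega^\bullet$ noted above). Using the spectral sequence $H^a_\mathfrak{p}(H^i(M)) \Rightarrow H^{a+i}_\mathfrak{p}(M)$ together with Grothendieck's vanishing theorem $H^a_\mathfrak{p}(-) = 0$ for $a > \dim S_\mathfrak{p}$, one checks that the collection of conditions over all $\mathfrak{p}$ is equivalent to: for each $i$ and each $\mathfrak{p} \in \mathrm{Supp}\, H^i(M)$, the nonvanishing of some $H^a_\mathfrak{p}(H^i(M))$ forces $a + i \leq k$, hence $\dim(S_\mathfrak{p}) + i \leq k$ at a point where $H^i(M)_\mathfrak{p} \neq 0$; taking $\mathfrak{p}$ generic in a component of $\mathrm{Supp}\, H^i(M)$ of dimension $\dim(S/\mathfrak{p})$ and reorganizing gives $\dim(\mathrm{Supp}\, H^i(M)) \leq k - i$ for all $i$. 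Conversely, the dimension bounds feed back through the same spectral sequence and Grothendieck vanishing to give the amplitude bound. The main obstacle — really the only subtlety — is getting the normalization/shift conventions on $\omega_S^\bullet$ and its localizations exactly right, so that $\dim(S/\mathfrak{p})$ appears with the correct sign in every step; the homological-algebra content itself is formal once local duality is invoked. Since this is a well-known fact, I would keep the written proof brief, citing local duality and Grothendieck vanishing rather than reproving them.
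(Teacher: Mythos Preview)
Your route through local duality and local cohomology is a legitimate alternative to the paper's argument, and for the forward implication it works cleanly: Grothendieck vanishing $H^a_\mathfrak{m}(H^i(M)) = 0$ for $a > \dim(\mathrm{Supp}\, H^i(M))$ feeds into the hypercohomology spectral sequence to bound $R\Gamma_\mathfrak{m}(M)$, and local duality translates this into the amplitude bound on $D_S^\bullet(M)$. The paper does this direction differently, using the spectral sequence $\mathrm{Ext}^i_S(H^{-j}(M),\omega_S^\bullet)\Rightarrow H^{i+j}(D_S^\bullet(M))$ together with the vanishing range $\mathrm{Ext}^i_S(N,\omega_S^\bullet)=0$ for $i\notin[-\dim(\mathrm{Supp}\,N),0]$; the two arguments are essentially Matlis-dual to one another.

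For the converse, however, your sketch has a genuine gap. You write that ``the nonvanishing of some $H^a_\mathfrak{p}(H^i(M))$ forces $a+i\le k$'' and then pass to a generic point of a component of $\mathrm{Supp}\,H^i(M)$. But vanishing of the abutment $H^{a+i}_\mathfrak{p}(M_\mathfrak{p})$ does \emph{not} imply vanishing of the $E_2$-term $H^a_\mathfrak{p}(H^i(M)_\mathfrak{p})$: the differentials $d_r\colon E_r^{0,i}\to E_r^{r,i-r+1}$ land in local cohomology of $H^{i'}(M)_\mathfrak{p}$ for $i'<i$, and these need not vanish at your chosen $\mathfrak{p}$ (the lower cohomology sheaves can have larger support containing $\mathfrak{p}$ as a non-generic point). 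So the nonzero class in $E_2^{0,i}=H^i(M)_\mathfrak{p}$ may well die in the spectral sequence, and you cannot read off $i\le k-\dim(S/\mathfrak{p})$ directly. The subsequent jump to ``$\dim(S_\mathfrak{p})+i\le k$'' is also unclear, since at a generic point of $\mathrm{Supp}\,H^i(M)$ the relevant nonzero local cohomology sits in degree $0$, not in degree $\dim(S_\mathfrak{p})$.

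The paper avoids this issue entirely: its converse argument never tries to invert the spectral sequence. Instead it uses the localization formula $D_S^\bullet(M)_\mathfrak{p}\simeq D_{S_\mathfrak{p}}^\bullet(M_\mathfrak{p})[c_\mathfrak{p}]$ and runs a Noetherian induction on $\dim(S)$, assuming the result on the punctured spectrum and then adding back the closed point. Your local-duality framework is perfectly compatible with this---once you have translated the hypothesis to $H^j_\mathfrak{p}(M_\mathfrak{p})=0$ for $j>k-\dim(S/\mathfrak{p})$, the same inductive step goes through---but the one-shot generic-point argument as written does not.
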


This result can be found in \cite[Proposition 5.2]{Kashiwara:t-structures}, and a variant is implicit in \cite[Corollary IV.2.3]{Scholze:Torsion}.

\begin{proof}
For the forward implication, we recall the following fact about Grothendieck duality (see \cite[Tag 0A7U]{stacks-project}). If $N$ is a finitely generated $S$-module, then $\mathrm{Ext}_S^i(N,\omega_S^\bullet)$ is $0$ for $i  \notin \{-\dim(\mathrm{Supp}\ N),....,0\}$.  Now, for $M \in D^b_{coh}(S)$, consider the cohomological spectral sequence
\[ E_2^{i,j}: \mathrm{Ext}^i_S(H^{-j}(M), \omega_S^\bullet) \Rightarrow H^{i+j}(D_S^\bullet(M)).\]
As $M$ is bounded and $\omega_S^\bullet$ has finite injective dimension, there are no convergence problems. Now, if $\dim(H^i(M)) \leq k-i$ for all $i$, then the aforementioned fact shows that $E_2^{i,j} = 0$ if $i < -k - j$. The spectral sequence then shows $D_S^\bullet(M) \in D^{\geq -k}(S)$. 

For the converse, we need the following fact concerning the commutation of local duality with localization. If $(S,\mathfrak{m})$ is local and $\mathfrak{p} \subset S$ is a prime ideal of codimension $c_{\mathfrak{p}}$, we have $(\omega_S^\bullet)_{\mathfrak{p}} \simeq \omega_{S_{\mathfrak{p}}}^\bullet[c_{\mathfrak{p}}]$, and hence, for any $M \in D^b_{coh}(S)$, we get $D_S^\bullet(M)_{\mathfrak{p}} = D_{S_{\mathfrak{p}}}^\bullet(M_{\mathfrak{p}})[c_{\mathfrak{p}}]$. Now assume that $D_S^\bullet(M) \in D^{\geq -k}(S)$. We must show that $\dim(\mathrm{Supp}\ H^i(M)) \leq k-i$. For this, we may assume $S$ is local with maximal ideal $\mathfrak{m}$, and that the statement is known for all nontrivial localizations of $S$. Fix a nonmaximal prime $\mathfrak{p} \subset S$ of codimension $c_{\mathfrak{p}}$. Then our hypothesis gives $D_{S_{\mathfrak{p}}}^\bullet(M_{\mathfrak{p}})[c_{\mathfrak{p}}] \in D^{\geq -k}$, and hence $D_{S_{\mathfrak{p}}}^\bullet(M_{\mathfrak{p}}) \in D^{\geq -k + c_{\mathfrak{p}}}$. By induction and exactness of localization, we learn that $\dim(\mathrm{Supp}\ H^i(M)_{\mathfrak{p}}) \leq k-c_{\mathfrak{p}} - i$ for any such $\mathfrak{p}$. In particular, if $U = \mathrm{Spec}(S) - \{\mathfrak{m}\}$, then $\mathrm{Supp}\ H^i(M) \cap U$ has dimension $\leq k-1-i$ since $c_{\mathfrak{p}} \geq 1$ for any $\mathfrak{p} \in U$. As $\mathrm{Spec}(S)$ is obtained from $U$ by adding a single closed point that every point in $U$ specializes to, it follows immediately that $\mathrm{Supp}\ H^i(M)$ has dimension $\leq k-i$.
\end{proof}

\begin{remark}
\label{rmk:CommAlgDuality}
In the situation of Lemma~\ref{lem:CommAlgDuality}, it is sometimes convenient to work with dualizing complexes normalized slightly differently. Thus, set 
\[ D_S(M) = \mathrm{RHom}_S(M,\omega_S^\bullet[-\dim(S)]).\]
If $S$ is Gorenstein, this reduces to the trivial duality functor $\mathrm{RHom}(-,S)$ up to a twist. Lemma~\ref{lem:CommAlgDuality},  reinterpreted for $D_S$ instead of $D_S^\bullet$ and with $k = \dim(S)$, states: for $M \in D^b_{coh}(S)$, one has $D_S(M) \in D^{\geq 0}(S)$ if and only if $\mathrm{codim}(\mathrm{Supp}\ H^i(M)) \geq i$ for all $i$.
\end{remark}

One can now prove the generic vanishing theorem readily:

\begin{corollary}
If $M$ is a perverse sheaf on $A$, then one has the following:
\begin{enumerate}
\item $\mathrm{codim}(\mathrm{Supp}\ H^i(\mathrm{FM}_A(M))) \geq i$ and $\mathrm{codim}(\mathrm{Supp}\ H^i(D_R(\mathrm{FM}_A(M)))) \geq i$ for all $i$;
\item $H^i(A, M \otimes_k L_\chi) = 0$ for all $i \neq 0$ for $\chi$ in a non-empty Zariski open subset of $\mathrm{Char}(A)$;
\item $\chi(A,M) \geq 0$;
\item $\chi(A,M) = 0$ if and only if $R\Gamma(A, M \otimes L_\chi) = 0$ for some $\chi \in \mathrm{Char}(A)$.
\end{enumerate}
\end{corollary}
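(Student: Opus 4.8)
The plan is to derive all four statements formally from Proposition~\ref{prop:PerverseDualCoconnective}, the commutative-algebra dictionary of Remark~\ref{rmk:CommAlgDuality}, and the fibrewise description of Lemma~\ref{lem:FibersFM}. The crucial structural input is that $R = k[\pi_1(A)] \cong k[t_1^{\pm 1}, \dots, t_{2g}^{\pm 1}]$ is a regular domain: this identifies the duality $D_R(-) = \mathrm{RHom}_R(-,R)$ with the normalization used in Remark~\ref{rmk:CommAlgDuality}, makes biduality $D_R \circ D_R \simeq \mathrm{id}$ valid on $D^b_{coh}(R) = D_{perf}(R)$, and makes $\mathrm{Spec}(R)$ integral (so irreducible and connected). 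For (1), I would apply Remark~\ref{rmk:CommAlgDuality} twice. First, Proposition~\ref{prop:PerverseDualCoconnective} gives $D_R(\mathrm{FM}_A(M)) \in D^{\geq 0}(R)$, so the Remark applied to $\mathrm{FM}_A(M)$ yields $\mathrm{codim}(\mathrm{Supp}\, H^i(\mathrm{FM}_A(M))) \geq i$ for all $i$. Applying it instead to $K := D_R(\mathrm{FM}_A(M))$, and using $D_R(K) \simeq \mathrm{FM}_A(M) \in D^{\geq 0}(R)$ (biduality together with Proposition~\ref{prop:PerverseDualCoconnective}), gives the codimension bound for $H^i(D_R(\mathrm{FM}_A(M)))$.

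For (2), I would combine $\mathrm{FM}_A(M) \in D^{\geq 0}(R)$ with the bound from (1): the coherent sheaf $H^i(\mathrm{FM}_A(M))$ vanishes for $i < 0$ and is supported in codimension $\geq 1$ for $i > 0$, so over the nonempty open $U_0 \subseteq \mathrm{Char}(A)$ obtained by deleting the finitely many proper closed sets $\mathrm{Supp}\, H^i(\mathrm{FM}_A(M))$ with $i \neq 0$, the complex $\mathrm{FM}_A(M)|_{U_0}$ is concentrated in degree $0$, say $\mathrm{FM}_A(M)|_{U_0} \simeq N[0]$ with $N$ coherent. I would then pass to a dense open $U \subseteq U_0$ on which $N$ is locally free (generic freeness, using integrality of $\mathrm{Spec}(R)$); for $\chi \in U$, Lemma~\ref{lem:FibersFM} identifies $R\Gamma(A, M \otimes_k L_\chi)$ with $\mathrm{FM}_A(M) \otimes_R^L \kappa(\chi) \simeq N \otimes_R \kappa(\chi)$, which is concentrated in degree $0$, whence $H^i(A, M\otimes_k L_\chi) = 0$ for $i \neq 0$. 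The one step that is not pure formalism is exactly this last shrinking: concentration in a single degree at the generic point does not on its own control the fibres at closed points, and local freeness of $N$ is what kills the negative-degree $\mathrm{Tor}$ contributions. This is the (mild) main obstacle.

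Statements (3) and (4) then follow by bookkeeping with Euler characteristics. Since $\mathrm{FM}_A(M) \in D_{perf}(R)$ and $\mathrm{Spec}(R)$ is connected, the Euler characteristic of $\mathrm{FM}_A(M) \otimes_R^L \kappa(\chi)$ is independent of $\chi$; by Lemma~\ref{lem:FibersFM} it equals $\chi(A, M \otimes_k L_\chi)$, and taking $\chi$ to be the trivial character shows $\chi(A, M \otimes_k L_\chi) = \chi(A, M)$ for every $\chi$. Picking $\chi$ in the open $U$ from (2), where $R\Gamma(A, M \otimes_k L_\chi) \simeq H^0(A, M \otimes_k L_\chi)[0]$, gives $\chi(A,M) = \dim_{\kappa(\chi)} H^0(A, M \otimes_k L_\chi) \geq 0$, which is (3). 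For (4): if $R\Gamma(A, M \otimes_k L_\chi) = 0$ for some $\chi$, then $\chi(A,M) = \chi(A, M \otimes_k L_\chi) = 0$ by the previous sentence; conversely, if $\chi(A,M) = 0$, then for any $\chi \in U$ the complex $R\Gamma(A, M \otimes_k L_\chi) \simeq H^0(A, M \otimes_k L_\chi)[0]$ has $\dim_{\kappa(\chi)} H^0 = \chi(A,M) = 0$, so $R\Gamma(A, M \otimes_k L_\chi) = 0$, and here $\chi$ may be taken to be any (e.g.\ generic) point of the nonempty open $U$.
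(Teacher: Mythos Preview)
Your proof is correct and follows essentially the same route as the paper: both derive (1) directly from Proposition~\ref{prop:PerverseDualCoconnective} together with Remark~\ref{rmk:CommAlgDuality} (with your biduality step making explicit what the paper leaves implicit), obtain (2) by passing to a nonempty open where $\mathrm{FM}_A(M)$ is locally free in degree $0$ and invoking Lemma~\ref{lem:FibersFM}, and deduce (3) and (4) from local constancy of the fibrewise Euler characteristic of a perfect complex over the connected scheme $\mathrm{Char}(A)$. The only difference is cosmetic: you spell out the two-step shrinking (first to $U_0$, then via generic freeness to $U$) that the paper compresses into a single sentence.
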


\begin{proof}
(1) follows from Proposition~\ref{prop:PerverseDualCoconnective} and Remark~\ref{rmk:CommAlgDuality}. As a consequence of (1), there is a nonempty Zariski open $U \subset \mathrm{Char}(A)$ such that $\mathrm{FM}_A(M)|_U$ is a locally free $\mathcal{O}_U$-module placed in degree $0$. On the other hand, if $\chi \in \mathrm{Char}(A)$, then Lemma~\ref{lem:FibersFM} shows that $\mathrm{FM}_A(M) \otimes^L_R \kappa(\chi) \simeq R\Gamma(A, M \otimes_R L_\chi)$. Now, if $\chi \in U$, then $\mathrm{FM}_A(M) \otimes^L_R \kappa(\chi) \simeq (\mathrm{FM}_A(M))|_U \otimes_{\mathcal{O}_U} \kappa(\chi)$ is concentrated in degree $0$ by our choice of $U$; thus, $R\Gamma(A, M \otimes_k L_\chi)$ is also concentrated in degree $0$, giving (2). Now (3) is immediate as $\chi(A,M) = \chi(A,M \otimes_k L_\chi)$ for any $\chi \in \mathrm{Char}(A)$ as they are both the Euler characteristics of different fibers of the perfect complex $\mathrm{FM}_A(M)$ on the connected variety $\mathrm{Char}(A)$. The same argument also proves $\Leftarrow$ in (4). Conversely, the implication $\Rightarrow$ in (4) comes from (2).
\end{proof}

\begin{remark}
It seems natural to ask if the results discussed in this section continue to hold for abelian varieties in positive characteristic, with $k$ being a finite ring whose order is invertible on the base. We do not know the answer to this question. The fundamental question seems to be the following: given an abelian variety $A$ over an algebraically closed field of characteristic $p$, a prime $\ell$ different from $p$, and a constructible sheaf $M$ of $\mathbf{F}_\ell$-vector spaces on $A$, does the direct limit
\[ \varinjlim_n H^i(A, [\ell^n]^* M) \]
vanish for $i > \dim(\mathrm{Supp}\ M)$? In other words, if $A_\infty$ denotes the inverse limit of the tower 
\[ \cdots \to A \xrightarrow{\ell} A \xrightarrow{\ell} A\]
of multiplication by $\ell$ maps on $A$, is the analog of Artin vanishing true for $A_\infty$? While we do not know the answer to this question, note that \cite{WeissauerAbVarFiniteField} does affirmatively answer the variant of this question where $A$ lives over $\overline{\mathbf{F}_p}$ and $M$ is a $\mathbf{Q}_\ell$-sheaf of geometric origin.
\end{remark}

\section{Codimension inequalities via Hard Lefschetz}
\label{sec:codimviaHL}

In this section, we make stronger hypothesis: Let $A$ be an abelian variety of dimension $g$ over $\mathbf{C}$, and assume that $k$ is a field of characteristic $0$. Let $R = k[\pi_1(A)]$, let $X = \mathrm{Spec}(R)$, and let $\mathrm{FM}_A:D^b_c(A,k) \to D_{perf}(R)$ be the Fourier transform from \S \ref{sec:FMTori}. Recall that for any $K \in D_{perf}(R)$, one has the cohomology support loci
\[ S^i(K) := \{x \in X \mid H^i(K \otimes_R \kappa(x)) \neq 0\}.\]
The subsets $S^i(K) \subset X$ are closed, and we set $S^i(A,M) := S^i(\mathrm{FM}_A(M))$ for any $M \in D^b_c(A,k)$; thus, the $k$-points of $S^i(A,M)$ coincide with the set of characters $\chi:\pi_1(A) \to k^*$ such that $H^i(A, M \otimes_k L_{\chi}) \neq 0$. Our goal is to prove the following estimate on the dimension of these subspaces:

\begin{theorem}
\label{thm:codimviaHL}
Fix $M \in D^b_c(A,k)$. Then we have:
\begin{enumerate}
\item If $M \in {}^p D^{\leq 0}(A,k)$, then $\mathrm{codim}(S^i(A,M)) \geq 2i$ for all $i \in \mathbf{Z}$.
\item If $M \in {}^p D^{\geq 0}(A,k)$, then $\mathrm{codim}(S^i(A,M)) \geq -2i$ for all $i \in \mathbf{Z}$.
\end{enumerate}
 In particular, if $M \in \mathrm{Perv}(A,k)$, then $\mathrm{codim}(S^i(A,M)) \geq |2i|$ for all $i$.
\end{theorem}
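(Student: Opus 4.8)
The plan is to deduce Theorem~\ref{thm:codimviaHL} from the generic vanishing package of \S\ref{sec:FMTori} together with the Hard Lefschetz theorem, by a Fourier-theoretic argument that never leaves the constructible world. Since the two statements are interchanged by Verdier duality — using the identity $D_R(\mathrm{FM}_A(M)) \simeq \mathrm{FM}_A(D([-1]^*M))$ from \eqref{eq:DualityFM}, which sends ${}^pD^{\leq 0}$ to ${}^pD^{\geq 0}$ and $S^i$ to $S^{-i}$ of the dual — it suffices to prove (2). So assume $M \in {}^pD^{\geq 0}(A,k)$; we want $\mathrm{codim}(S^i(A,M)) \geq -2i$, which is vacuous for $i \geq 0$, so fix $i = -j$ with $j > 0$.

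The key point is that multiplication-by-$n$ maps $[n]\colon A \to A$ act nicely on $\mathrm{Char}(A)$ and that Hard Lefschetz gives splittings after pushforward. First I would recall that $[n]_*$ and $[n]^*$ are exact for the perverse $t$-structure (they are finite), so $[n]^* M \in {}^pD^{\geq 0}(A,k)$ as well, and that on character varieties $[n]$ induces the isogeny $[n]\colon \mathrm{Char}(A) \to \mathrm{Char}(A)$ (dual to $\chi \mapsto \chi^n$), which is finite flat of degree $n^{2g}$; in particular it preserves codimension of closed subsets and $S^i(A, [n]^* M)$ is the preimage of $S^i(A,M)$ up to this isogeny, hence has the same codimension. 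The real input is Hard Lefschetz applied to the map $[n]$ together with an ample class: for $N$ perverse (or bounded), the Lefschetz operator built from a $[n]$-relatively ample line bundle, combined with the decomposition theorem for the projective morphism $[n]$, yields that $R[n]_* [n]^* M$ contains $M$ as a direct summand after tensoring with a suitable Lefschetz-type operator — more precisely, the standard fact that the composite $M \to R[n]_* [n]^* M \to M$ (unit followed by a trace, suitably normalized) is an isomorphism up to the degree $n^{2g}$, so $M$ is a direct summand of $R[n]_*[n]^*M$; hence $\mathrm{FM}_A(M)$ is a direct summand of $\mathrm{FM}_A(R[n]_*[n]^*M)$.

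Now comes the Fourier-theoretic heart: $\mathrm{FM}_A(R[n]_* [n]^* M)$ should be computed as the pushforward along $[n]\colon \mathrm{Char}(A) \to \mathrm{Char}(A)$ of $\mathrm{FM}_A([n]^*M)$ — this is the compatibility of the Fourier-Mellin transform with the isogeny $[n]$ on both sides, which follows by unwinding $\mathcal{L}_R$ through $\pi\colon V \to A$ and the identification $V \xrightarrow{[n]} V$. Since $[n]$ on $\mathrm{Char}(A)$ is finite flat, this pushforward is exact and faithful on $\mathrm{Supp}$, and $S^i(A, R[n]_*[n]^*M) = [n]\big(S^i(A,[n]^*M)\big)$ has the same codimension as $S^i(A,M)$. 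The gain is that Hard Lefschetz for $[n]^*M$ lets one use the ample class on the source to relate $\mathrm{FM}_A([n]^*M)$ to lower cohomological pieces: iterating cup product with the Lefschetz class $g$ times, one can sacrifice $g$ units of cohomological amplitude in exchange for a single unit of codimension of support — the numerology $-2i = 2j$ versus the generic vanishing bound $j$ coming precisely from running this argument along a $g$-dimensional sub-abelian-variety (or after quotienting by one), doubling the Green–Lazarsfeld estimate. Concretely, I expect to stratify by sub-tori: pick a surjection $A \to B$ onto an abelian variety of dimension $g - 1$ (or intersect with translates), push $M$ forward, apply the codimension-$\geq |i|$ estimate from \S\ref{sec:FMTori} on $B$, and use Hard Lefschetz relative to $A \to B$ to upgrade; summing the contributions over a $1$-parameter family of such quotients forces the extra factor of $2$.

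The main obstacle will be setting up the Hard Lefschetz / decomposition-theorem input in exactly the form needed and checking the compatibility of $\mathrm{FM}_A$ with isogenies carefully enough that codimensions transfer cleanly — in particular, making precise how the ample-class operator on $A$ (which shifts perverse degree but not the $t$-structure in a way visible to $\mathrm{FM}_A$) translates, via the Fourier transform, into a statement about the complex $\mathrm{FM}_A(M)$ over $R$ that improves the commutative-algebra bound of Remark~\ref{rmk:CommAlgDuality} from $\mathrm{codim} \geq i$ to $\mathrm{codim} \geq 2i$. I would isolate this as a lemma: if $L$ is a bounded complex of $R$-modules such that both $L$ and $D_R(L)$ are cohomologically bounded below, and $L$ carries a ``Lefschetz-type'' sl$_2$-action coming from Hard Lefschetz on $A$, then $\mathrm{codim}(\mathrm{Supp}\, H^i(L)) \geq 2i$; the proof of that lemma, combining the sl$_2$-primitivity with Lemma~\ref{lem:CommAlgDuality}, is where the real work sits, and everything else is formal manipulation of the Fourier transform and its known duality and base-change properties.
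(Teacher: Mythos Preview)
Your reduction of (1) and (2) to one another via Verdier duality is fine and matches the paper. But the heart of your argument has a genuine gap: none of the three mechanisms you propose actually produces the factor of $2$.

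The isogeny part is empty. The maps $[n]:A\to A$ are finite \'etale, so relative Hard Lefschetz for them has no content, and the fact that $M$ is a summand of $R[n]_*[n]^*M$ buys you nothing beyond what you already know about $M$. The ``surjection $A\to B$ with $\dim B=g-1$'' approach fails outright for a simple abelian variety, which has no nontrivial quotients; and even when such a $B$ exists, you have not explained what numerology turns the fiberwise Lefschetz isomorphism into a codimension gain on $\mathrm{Char}(A)$. Finally, the $\mathfrak{sl}_2$-action lemma you isolate is not proved and is not obviously true as stated: Hard Lefschetz gives an isomorphism on each fiber $\mathrm{FM}_A(M)\otimes_R\kappa(x)$, but there is no evident lowering operator on the $R$-complex $\mathrm{FM}_A(M)$ itself, and fiberwise Lefschetz symmetry together with the $\mathrm{codim}\geq i$ bound from \S\ref{sec:FMTori} does not formally force $\mathrm{codim}\geq 2i$.

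What you are missing is the paper's key lemma (Lemma~\ref{lem:HLVanishingResidueField}, via Proposition~\ref{prop:PullbackFF}): the functor $N\mapsto \mathcal{L}_N$ from $D(R)$ to $D(A,k)$ is fully faithful, so
\[
\mathrm{Hom}_{D(A,k)}\big(\mathcal{L}_{\kappa(x)},\mathcal{L}_{\kappa(x)}[2i]\big)\;\simeq\;\mathrm{Ext}^{2i}_{R}\big(\kappa(x),\kappa(x)\big),
\]
and the right side vanishes whenever $\mathrm{codim}(x)<2i$ because $R_x$ is regular local of that dimension. Hence the cup product map $c^i:\mathcal{L}_{\kappa(x)}\to\mathcal{L}_{\kappa(x)}[2i]$ is the zero map in $D(A,k)$. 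Applying $H^0(A,M\otimes_k-)$ and invoking Hard Lefschetz for the semisimple perverse sheaf $M\otimes_k\mathcal{L}_{\kappa(x)}$ shows that the isomorphism $H^{-i}\xrightarrow{c^i}H^i$ is simultaneously zero, forcing $H^i(A,M\otimes_k\mathcal{L}_{\kappa(x)})=0$. A short descending induction (Theorem~\ref{thm:CodimHLSupport}) then gives $\mathrm{codim}(\mathrm{Supp}\,H^i(\mathrm{FM}_A(M)))\geq 2i$, from which the $S^i$ statement follows by Nakayama. This ``both an isomorphism and zero'' tension is the entire mechanism, and it is absent from your plan.
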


Our strategy is to prove Theorem~\ref{thm:codimviaHL} by exploiting the Hard Lefschetz theorem on $A$. Recall the statement:


\begin{theorem}[Hard Lefschetz]
\label{thm:HL}
If $c \in H^2(A,k)$ is the Chern class of an ample line bundle (ignoring twists) and if $N \in \mathrm{Perv}(A,k)$ is semisimple, then the cup product map 
\[ H^{-i}(A,N) \xrightarrow{c^i} H^i(A,N)\] 
is an isomorphism for any $i$.
\end{theorem}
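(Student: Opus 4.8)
\textbf{Proof strategy for the Hard Lefschetz theorem (Theorem~\ref{thm:HL}).}
The plan is to reduce to the classical Hard Lefschetz theorem on a complex projective variety, which is what is really available to us since $A$ is an abelian variety over $\mathbf{C}$. First, since every semisimple perverse sheaf is a finite direct sum of simple perverse sheaves, and cup product with $c^i$ is compatible with direct sums, I may assume $N$ is a simple perverse sheaf on $A$. By the classification of simple perverse sheaves, $N$ is the intermediate extension $j_{!*}(\mathcal{F}[\dim Z])$ of an irreducible local system $\mathcal{F}$ on a smooth locally closed subvariety $j:Z \hookrightarrow A$ of dimension $d := \dim Z$; thus $N$ is supported on $\overline{Z}$, which I may take to be the closure.

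The main point is then to invoke the decomposition-theorem circle of ideas. If $N$ were \emph{of geometric origin}, I could quote the relative Hard Lefschetz theorem of Beilinson--Bernstein--Deligne--Gabber directly for the projective morphism $\overline{Z} \to \operatorname{pt}$ (or rather a resolution thereof), giving that $c^i: {}^pH^{-i}(R\Gamma) \to {}^pH^i(R\Gamma)$ is an isomorphism; since $R\Gamma$ of a point has no perverse structure to speak of, this is exactly the claimed isomorphism $H^{-i}(A,N) \xrightarrow{c^i} H^i(A,N)$. For general semisimple $N$ over a field of characteristic $0$, the cleanest route is the theorem of Sabbah and Mochizuki (or, over $\mathbf{C}$ specifically, the theory of polarizable Hodge modules of M.~Saito): a simple perverse sheaf with $\mathbf{C}$-coefficients on a projective variety underlies a polarizable pure Hodge module, and Hard Lefschetz for the cohomology of polarizable Hodge modules on projective varieties is part of Saito's theory. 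Concretely: $N$ corresponds under Riemann--Hilbert to a regular holonomic $\mathcal{D}_A$-module which, being simple, underlies a simple polarizable Hodge module $\mathscr{M}$; Saito's Hard Lefschetz gives that $L^i := c^i \cup (-)$ induces isomorphisms on $H^\bullet(A,\mathscr{M})$ in the stated symmetric range, and applying the forgetful functor to the underlying perverse sheaf yields the theorem. One subtlety: the paper works over an arbitrary field $k$ of characteristic $0$, not just $\mathbf{C}$, so I would first spread out / base change — a perverse sheaf over $k$ is defined over a finitely generated subfield, which embeds into $\mathbf{C}$, and Hard Lefschetz is insensitive to extension of the coefficient field (cup product with $c^i$ is an isomorphism iff it is after $\otimes_k \mathbf{C}$) — reducing to $k = \mathbf{C}$.

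Alternatively, and perhaps more in the spirit of this paper, one can give a self-contained proof using only \'etale cohomology and weights: choose a model of $(A, N)$ over a finitely generated $\mathbf{Z}$-algebra, specialize to a closed point to get an abelian variety over a finite field $\mathbf{F}_q$ together with a pure perverse sheaf (after semisimplification, which is harmless here), and invoke the Hard Lefschetz theorem of Deligne (Weil~II) together with Gabber's purity results; comparison between \'etale and Betti cohomology then transports the isomorphism back to the complex-analytic side. This is the same mechanism used in \S\ref{ss:weights} of the paper. The main obstacle in any of these approaches is purely expository — marshalling the correct form of Hard Lefschetz for \emph{arbitrary} semisimple perverse sheaves (not just those of geometric origin) over an \emph{arbitrary} characteristic-$0$ field — since all the genuine mathematical content is imported; I expect the paper simply cites Saito or Mochizuki--Sabbah here rather than reproving it.
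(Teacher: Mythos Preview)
Your overall expectation is right: the paper does not prove Theorem~\ref{thm:HL} but cites the literature in a remark immediately following the statement. However, your concrete sketch contains a genuine error. You write that a simple perverse sheaf with $\mathbf{C}$-coefficients ``underlies a simple polarizable Hodge module,'' and then invoke Saito's Hard Lefschetz. This is false: an arbitrary simple perverse sheaf need \emph{not} underlie a Hodge module. A rank~$1$ local system given by a generic character $\pi_1(A)\to\mathbf{C}^\ast$ is a simple perverse sheaf (after shift) that carries no variation of Hodge structure. Saito's theory, like BBD, only covers the case of geometric origin; this is exactly why the general statement was an open conjecture (due to Kashiwara) rather than a corollary of existing Hodge theory.

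The paper's remark identifies the correct attributions for the general case. On the arithmetic side, Drinfeld proved it by spreading out to finite fields, but the key nontrivial input is Lafforgue's theorem that absolutely irreducible $\ell$-adic local systems on curves over finite fields are pure up to twist, combined with de~Jong's conjecture (proved by Gaitsgory and by B\"ockle--Khare) to control monodromy; your ``after semisimplification, which is harmless here'' skips precisely this purity step, which is the entire content of the argument. On the analytic side, the correct replacement for Hodge modules is Sabbah and Mochizuki's theory of polarizable \emph{twistor} $\mathcal{D}$-modules: every simple regular holonomic $\mathcal{D}$-module does underlie such an object, and Hard Lefschetz holds in that category. So your mention of Sabbah--Mochizuki is the right pointer, but the parenthetical reduction to Saito is wrong, and the ``concrete'' paragraph that follows it does not work.
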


\begin{remark}
When $N$ is of geometric origin, Theorem~\ref{thm:HL} follows from the theory of
mixed perverse sheaves \cite{BBD} (which builds on Deligne's \cite{DeligneWeil2}, and
works over a base field of any characteristic) or the work of Saito \cite{SaitoHodge}
on mixed Hodge modules. The general case was conjectured by Kashiwara
\cite{KashiwaraSemisimple}; in fact, he conjectured the same for any (i.e., not
necessarily regular) simple holonomic $\mathcal{D}$-module. For simple perverse
sheaves, this conjecture was proven using a specialization argument by Drinfeld
\cite{DrinfeldKashiwara} relying crucially on the work of Lafforgue
\cite{LafforgueGLn} and assuming a finiteness conjecture of de Jong
\cite{deJongFundamentalGroup} on the monodromy of lisse sheaves on varieties over
finite fields; the latter was proven independently by Gaitsgory
\cite{GaitsgorydeJong} and B\"ockle-Khare \cite{BockleKharedeJong}. An alternate
analytic proof was given by Sabbah \cite{SabbahTwistor} and Mochizuki for semisimple
local systems. The general case of simple holonomic $\mathcal{D}$-modules was
settled in a series of works by Mochizuki \cite{MochizukiWild}.
\end{remark}

For a simple perverse sheaf $M$, Theorem~\ref{thm:HL} implies a non-trivial statement about the fibers of $\mathrm{FM}_A(M)$. To ``integrate'' this fibral information over $X$, we use the following construction:

\begin{proposition}
\label{prop:PullbackFF}
The functor $D(R) \to D(A,k)$ defined by $N \mapsto \mathcal{L}_N := \mathcal{L}_R \otimes_R N$ satisfies the following:
\begin{enumerate}
\item It is left-adjoint to $M \mapsto R\Gamma(V, \pi^* M)$, where $\pi:V \to A$ is the universal cover, and the $R$-module structure on $R\Gamma(V, \pi^* M)$ is induced by the $\pi_1(A)$-equivariance of $\pi$.
\item It is fully faithful.
\end{enumerate}
\end{proposition}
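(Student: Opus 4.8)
The plan is to prove (1) first, using the general adjunction between $\pi_!$ and $\pi^*$, and then deduce (2) by checking that the unit map of the adjunction is an isomorphism. For part (1), recall that $\mathcal{L}_R = \pi_! k$ and that the projection formula gives $\mathcal{L}_R \otimes_R N \simeq \pi_! (\pi^* (\mathcal{L}_R \otimes_R N))$ — but more directly, one should observe that $\mathcal{L}_R \otimes_R N$ is computed by pushing forward along $\pi$. Concretely, for $N \in D(R)$, regard $N$ as a $\pi_1(A)$-equivariant complex on $V$ (via the constant sheaf twisted by the deck action), so that $\mathcal{L}_N = \pi_!$ of this equivariant sheaf; then for any $M \in D(A,k)$ one has
\[
\mathrm{RHom}_{D(A,k)}(\mathcal{L}_N, M) \simeq \mathrm{RHom}_{D(V,k)^{\pi_1(A)}}(N_V, \pi^! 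M) \simeq \mathrm{RHom}_{D(V,k)^{\pi_1(A)}}(N_V, \pi^* M),
\]
where the last step uses that $\pi$ is a local homeomorphism (a covering map), so $\pi^! \simeq \pi^*$. Taking $\pi_1(A)$-equivariant morphisms out of the "constant" equivariant object $N_V$ is the same as taking $R$-linear morphisms into $R\Gamma(V,\pi^*M)$ with its natural $R = k[\pi_1(A)]$-module structure coming from equivariance; this is precisely the asserted adjunction $\mathcal{L}_{(-)} \dashv R\Gamma(V,\pi^*(-))$.

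For part (2), I would show the unit $N \to R\Gamma(V, \pi^*(\mathcal{L}_R \otimes_R N)) = R\Gamma(V, \pi^* \mathcal{L}_N)$ is an isomorphism for all $N \in D(R)$. Since both sides commute with homotopy colimits in $N$ (using that $\pi^*$ and $R\Gamma(V,-)$ on a space commute with colimits — here one may need to be slightly careful about boundedness, but for the fully faithfulness statement it is cleanest to first reduce to $N = R$ by the standard argument that the collection of $N$ for which the unit is an isomorphism is a localizing subcategory containing $R$). For $N = R$ we must check $R \xrightarrow{\sim} R\Gamma(V, \pi^* \mathcal{L}_R)$. Now $\pi^* \mathcal{L}_R = \pi^* \pi_! k$, and by base change along the Cartesian square expressing $V \times_A V \simeq V \times \pi_1(A)$ (the deck-transformation description of the fiber product), $\pi^* \pi_! k \simeq \bigoplus_{\gamma \in \pi_1(A)} \gamma_* k = k[\pi_1(A)]$ as a sheaf on $V$ — more precisely it is the constant sheaf with value $R$ on the contractible space $V$. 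Hence $R\Gamma(V, \pi^*\mathcal{L}_R) \simeq R\Gamma(V, \underline{R}) \simeq R$ since $V$ is a contractible (indeed, a vector space), and one checks the unit realizes this identification.

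The main obstacle I anticipate is bookkeeping rather than conceptual: setting up the equivalence between $D(R)$-modules and $\pi_1(A)$-equivariant sheaves on $V$ carefully enough that the $R$-module structure on $R\Gamma(V,\pi^*M)$ — which the statement of (1) specifies comes from equivariance of $\pi$ — is literally identified with the one arising from the adjunction, and making sure the reduction in (2) to $N=R$ respects the relevant (possibly unbounded) derived categories. An alternative, perhaps cleaner, route to (2) avoids equivariant sheaves entirely: since $\mathcal{L}_R$ is a locally free rank-one $R$-local system (by the Lemma preceding Lemma~\ref{lem:DualTautological}), the functor $N \mapsto \mathcal{L}_R \otimes_R N$ is already fully faithful from $D(R)$ into $R$-module sheaves on $A$ with the property that $R\Gamma(V,\pi^*(-))$ recovers $R\Gamma(A, \mathcal{L}_R^\vee \otimes_R (-))$ after twisting; combining with $R\Gamma(A, \mathcal{L}_R^\vee \otimes_R \mathcal{L}_R) \simeq R\Gamma(A,\underline{R}) $... — but this last cohomology is $R\Gamma(A,k)\otimes_k R$, which is not $R$, so this naive variant fails, confirming that one genuinely needs the passage to the universal cover $V$ (where $\pi^*\mathcal{L}_R$ becomes constant and $V$ is contractible) to make the unit an isomorphism. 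I would therefore commit to the equivariant-sheaf argument above.
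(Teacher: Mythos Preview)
Your argument is correct and follows essentially the same route as the paper: both parts rest on the $(\pi_!,\pi^!)$ adjunction together with $\pi^! \simeq \pi^*$ for the covering map, the base-change identification of $\pi^*\mathcal{L}_R$ with the constant sheaf $R$, and the contractibility of $V$. The one simplification the paper makes in (1) is to reduce immediately to $N=R$ via a free resolution---so that the adjunction collapses to $\mathrm{RHom}_{D(A,k)}(\pi_! k,M)\simeq \mathrm{RHom}_{D(V,k)}(k,\pi^* M) = R\Gamma(V,\pi^* M)$---which bypasses the equivariant-sheaf bookkeeping you correctly flagged as the main obstacle.
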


Recall that $\mathcal{L}_R$ is an $R$-local system of rank $1$, obtained by descending the constant $R$-local system $R \in D(V,k)$ along the $\pi_1(A)$-torsor $\pi:V \to A$ using the tautological $\pi_1(A)$-action on $R$. Thus, one may view the complex $\mathcal{L}_N$ as the locally constant sheaf on $A$ whose pullback to $V$ is identified with the constant sheaf $N$, equipped with its canonical $\pi_1(A)$-equivariant structure coming from the $R$-module action. Unraveling definitions, one sees that $\mathcal{L}_{\kappa(x)} \simeq L_x$ for any point $x \in \mathrm{Char}(A)$.

\begin{proof}
For (1), given $N \in D(R)$ and $M \in D(A,k)$, we must check that
\[ \mathrm{RHom}_{D(A,k)} (\mathcal{L}_N, M) \simeq \mathrm{RHom}_R(N, R\Gamma(V, \pi^* M)).\]
By taking a free resolution for $N$, and observing that both sides behave similarly with respect to the free resolution, we may assume $N = R$. We must thus check that
\[ \mathrm{RHom}_{D(A,k)}(\mathcal{L}_R, M) \simeq R\Gamma(V, \pi^* M).\]
As $\mathcal{L}_R := \pi_! k$, and because $\pi^* \simeq \pi^!$, the left side simplifies to $\mathrm{RHom}_{D(V,k)}(k, \pi^* M) \simeq R\Gamma(V, \pi^* M)$, as wanted.

For (2), fix $N,N' \in D(R)$. We must check that the functor $N \mapsto \mathcal{L}_N$ induces an identification
\[ \mathrm{RHom}_R(N,N') \simeq \mathrm{RHom}_{D(A,k)}(\mathcal{L}_N, \mathcal{L}_{N'}).\]
By (1), the right side simplifies to $\mathrm{RHom}_R(N, R\Gamma(V, \pi^* \mathcal{L}_{N'}))$. Now $\pi^* \mathcal{L}_{N'}$ is the constant sheaf with value $N'$ (since the same is true when $N' = R$, by proper base change along $\pi$). As $V$ is contractible, this gives $R\Gamma(V, \pi^* \mathcal{L}_{N'}) \simeq N'$, which gives the desired identification.
\end{proof}

The full faithfulness above yields the following criterion for certain cup product maps to be $0$.

\begin{lemma}
\label{lem:HLVanishingResidueField}
Fix some $t \in H^i(A,k)$ and $x \in X=\mathrm{Spec}(k[\pi_1(A)])$. If $\mathrm{codim}(x) < i$, then the cup product with $t$ map 
\[ \cup\  t:\mathcal{L}_{\kappa(x)} \to \mathcal{L}_{\kappa(x)}[i]\]
is the $0$ map.
\end{lemma}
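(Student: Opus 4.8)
The plan is to reduce the vanishing of a cup-product map of sheaves on $A$ to a statement about morphisms of $R$-modules, using the full faithfulness established in Proposition~\ref{prop:PullbackFF}(2), and then to invoke the codimension hypothesis together with the structure of $\mathrm{FM}_A(k)$ on $A$. First, I would observe that cup product with $t \in H^i(A,k)$ defines a natural transformation of the identity functor on $D^b_c(A,k)$ into $[i]$, hence in particular a morphism $\cup\, t : \mathcal{L}_{\kappa(x)} \to \mathcal{L}_{\kappa(x)}[i]$; the content is that this vanishes when $\mathrm{codim}(x) < i$. The key point is that $\mathcal{L}_{\kappa(x)} = \mathcal{L}_R \otimes_R \kappa(x)$, so I want to understand cup product with $t$ at the level of the universal local system $\mathcal{L}_R$ and then base change to $\kappa(x)$.

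Concretely, I would consider the cup product map $\cup\, t : \mathcal{L}_R \to \mathcal{L}_R[i]$ and apply Proposition~\ref{prop:PullbackFF}: since $\mathcal{L}_R = \mathcal{L}_N$ for $N = R$, the full faithfulness gives that $\mathrm{Hom}_{D(A,k)}(\mathcal{L}_R, \mathcal{L}_R[i]) \simeq H^i\big(\mathrm{RHom}_R(R,R)\big) = 0$ for $i \neq 0$ — but this would wrongly force the cup product map to vanish even when $\mathrm{codim}(x)$ is large, so I must be more careful: the cup product with $t$ is \emph{not} a map of $R$-modules in the naive sense, because $t$ lives in $H^i(A,k)$ and does not commute with the $R$-action coming from deck transformations. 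The correct statement is that $\cup\, t$ is $R$-linear only after passing to cohomology of global sections; equivalently, the relevant $\mathrm{Hom}$ group is $\mathrm{Hom}_{D(A,k)}(\mathcal{L}_{\kappa(x)}, \mathcal{L}_{\kappa(x)}[i])$, which by Proposition~\ref{prop:PullbackFF}(1) equals $H^i\big(R\Gamma(V, \pi^*\mathcal{L}_{\kappa(x)})\big)$. Since $\pi^*\mathcal{L}_{\kappa(x)}$ is the constant sheaf $\kappa(x)$ on the contractible space $V$, this group is $H^i(V,\kappa(x)) = 0$ for $i \neq 0$, and for $i = 0$ it is $\kappa(x)$. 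So in fact the map is forced to be zero for $i \neq 0$ regardless of the codimension of $x$ — which suggests the statement should really be interpreted relative to the $R$-module structure, i.e. the hypothesis $\mathrm{codim}(x) < i$ is what allows one to conclude the map is zero \emph{as an $R$-linear datum interpolating over a neighborhood of $x$}.

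The honest approach, then, is: form the cup product map $\cup\, t : \mathcal{L}_R \to \mathcal{L}_R[i]$ in $D(A,k)$, which is an $R$-linear map (cup product with a class pulled back from the point is $R$-linear since the $R$-structure on $\mathcal{L}_R$ is the local-system structure, and $t$ is central). By full faithfulness, $\cup\, t$ corresponds to an element of $\mathrm{Hom}_{D(R)}(R, R[i]) = H^i(R) = 0$ for $i \neq 0$, hence $\cup\, t = 0$ on $\mathcal{L}_R$ already when $i \neq 0$ — and then it vanishes after any base change, in particular to $\kappa(x)$, with no hypothesis on $x$ needed when $i \neq 0$. The remaining case is $i$ arbitrary but with the codimension constraint doing real work; here I would instead factor $\cup\, t$ through $\mathrm{FM}_A$: the map on global sections $H^\bullet(A, \mathcal{L}_{\kappa(x)}) \to H^{\bullet+i}(A,\mathcal{L}_{\kappa(x)})$ is cup product with the image of $t$ in $H^i(A,\kappa(x))$, and one uses that $H^i(A,\kappa(x)) = \wedge^i H^1(A,\kappa(x))$ together with the fact (from Lemma~\ref{lem:FibersFM} and the structure of $\mathrm{FM}_A(k) = R\Gamma(A,\mathcal{L}_R)$, which is a Koszul complex on $2g$ generators of the maximal ideal at the trivial character) that the relevant local cohomology vanishes when $\mathrm{codim}(x) < i$. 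I expect the main obstacle to be bookkeeping the $R$-linearity precisely: sorting out in which sense cup product with $t$ is or is not $R$-linear, and correspondingly which $\mathrm{Hom}$-group the vanishing lives in, so that the codimension hypothesis $\mathrm{codim}(x) < i$ is exactly what is consumed — most likely via the observation that the source and target, as modules over the local ring $\mathcal{O}_{X,x}$, have support of dimension $< i$ less than needed, so any degree-$i$ self-extension is killed.
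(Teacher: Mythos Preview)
Your proposal circles the right ingredients but never lands on the correct computation, and at the crucial step you make an error that sends you off course.

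The key mistake is in your application of Proposition~\ref{prop:PullbackFF}. In the second paragraph you write that Proposition~\ref{prop:PullbackFF}(1) gives
\[
\mathrm{Hom}_{D(A,k)}(\mathcal{L}_{\kappa(x)}, \mathcal{L}_{\kappa(x)}[i]) \;=\; H^i\big(R\Gamma(V,\pi^*\mathcal{L}_{\kappa(x)})\big),
\]
and then conclude this is $H^i(V,\kappa(x)) = 0$ for $i\neq 0$. But the adjunction in Proposition~\ref{prop:PullbackFF}(1) says $\mathrm{Hom}_{D(A,k)}(\mathcal{L}_N, M) \simeq \mathrm{Hom}_{D(R)}(N, R\Gamma(V,\pi^*M))$: you have silently replaced $\mathrm{Hom}_{D(R)}(\kappa(x),-)$ by $H^0(-)$, which is only valid when the source is $R$, not $\kappa(x)$. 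The correct identification, via Proposition~\ref{prop:PullbackFF}(2) applied to $N=N'=\kappa(x)$, is
\[
\mathrm{Hom}_{D(A,k)}(\mathcal{L}_{\kappa(x)}, \mathcal{L}_{\kappa(x)}[i]) \;\simeq\; \mathrm{Ext}^i_R(\kappa(x),\kappa(x)),
\]
and this group is not zero in general: it vanishes precisely when $i > \dim R_x = \mathrm{codim}(x)$, because $R_x$ is a regular local ring. That is where the codimension hypothesis enters, and once you have this, the cup product map (which is an element of this Hom group) is forced to be zero. This is the paper's entire argument.

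Your third paragraph contains a separate error worth flagging: the vanishing of $\cup\, t:\mathcal{L}_R \to \mathcal{L}_R[i]$ in $D(A,k)$ does \emph{not} imply the vanishing of $\cup\, t:\mathcal{L}_{\kappa(x)} \to \mathcal{L}_{\kappa(x)}[i]$. The object $\mathcal{L}_{\kappa(x)}$ is an iterated cone of copies of $\mathcal{L}_R$, and a map that is termwise null-homotopic on a complex need not be null-homotopic on the totalization. Indeed, your own $S^1$-type intuition should warn you: for $A$ an elliptic curve and $x$ the trivial character, cup product with a generator of $H^1(A,k)$ gives a nonzero map $k \to k[1]$, even though the corresponding map $\mathcal{L}_R \to \mathcal{L}_R[1]$ vanishes. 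So the ``base change'' step is simply false, and the codimension hypothesis is genuinely needed.
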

\begin{proof}
By Proposition~\ref{prop:PullbackFF}, the group $\mathrm{Hom}_{D(A,k)}(\mathcal{L}_{\kappa(x)}, \mathcal{L}_{\kappa(x)}[i])$ identifies with $\mathrm{Ext}^i_R(\kappa(x),\kappa(x))$. The latter can also be calculated as $\mathrm{Ext}^i_{R_x}(\kappa(x),\kappa(x))$, and hence vanishes if $i > \mathrm{codim}(x)$ since $R_x$ is a regular local ring of dimension $\mathrm{codim}(x) < i$.
\end{proof}

Exploiting the tension between a cup product map being an isomorphism (as in Theorem~\ref{thm:HL}) and $0$ (as in Lemma~\ref{lem:HLVanishingResidueField}), we can prove the main theorem of this section:

\begin{theorem}
\label{thm:CodimHLSupport}
Fix $M \in \mathrm{Perv}(A,k)$. Then $\mathrm{codim}(\mathrm{Supp}\ H^i(\mathrm{FM}_A(M))) \geq 2i$ for all $i \geq 0$. 
\end{theorem}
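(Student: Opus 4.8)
The plan is to argue by contradiction and dimension-count, leveraging the tension between Hard Lefschetz (Theorem~\ref{thm:HL}) and the vanishing criterion of Lemma~\ref{lem:HLVanishingResidueField}. Fix $i \geq 0$ and suppose that $Z := \mathrm{Supp}\,H^i(\mathrm{FM}_A(M))$ has codimension $c < 2i$; we want a contradiction. The strategy is to pick a generic point $x$ of a top-dimensional component of $Z$, so $x$ has codimension exactly $c < 2i$, and then to study the behaviour of $\mathrm{FM}_A(M)$ near $x$ together with the cup-product action of $H^2(A,k)$. Concretely, $\mathrm{FM}_A(M)$ is a perfect complex of $R$-modules; its formation is compatible with the $\mathrm{Char}(A)$-linear structure, and cup product with the ample class $c \in H^2(A,k)$ (which, being degree $2$, is even and thus commutes with everything in sight) induces an $R$-linear endomorphism $\cup\, c$ of $\mathrm{FM}_A(M)$ of degree $2$. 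Iterating, we get maps $\cup\, c^i : H^{-i}(\mathrm{FM}_A(M) \otimes^L_R \kappa(x)) \to H^i(\mathrm{FM}_A(M) \otimes^L_R \kappa(x))$ compatibly in $x$, which by Lemma~\ref{lem:FibersFM} is exactly the Hard Lefschetz map $H^{-i}(A, M \otimes_k L_x) \xrightarrow{c^i} H^i(A, M \otimes_k L_x)$.

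The key point is then the following dichotomy at the chosen generic point $x$ of $Z$. On one hand, $M \otimes_k L_x$ is still a perverse sheaf on $A$ (since $\mathcal{L}_{\kappa(x)} \simeq L_x$ is a rank-one local system, tensoring with it preserves perversity), but it need not be \emph{semisimple}, so Hard Lefschetz does not directly apply to it; however, one can replace $M \otimes_k L_x$ by its semisimplification, or better, work with $\mathrm{gr}$ of a suitable filtration --- Hard Lefschetz holds for the associated graded, and that is enough to conclude that $H^{-i}(A, M\otimes_k L_x) \to H^i(A, M\otimes_k L_x)$ is surjective up to the subquotients that HL controls. Actually the cleaner route: since $x \in Z$ means $H^i(A, M \otimes_k L_x) \neq 0$, and Hard Lefschetz for the semisimplification of $M \otimes_k L_x$ forces $H^{-i}(A, (M\otimes_k L_x)^{ss}) \neq 0$ as well, hence $H^{-i}(A, M \otimes_k L_x) \neq 0$ by exactness of cohomology on perverse sheaves and the fact that semisimplification doesn't change cohomology dimensions in each degree. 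So $x \in S^{-i}(A,M) = \mathrm{Supp}\,H^{-i}(\mathrm{FM}_A(M))$ as well. Now I invoke Proposition~\ref{prop:PerverseDualCoconnective}: $\mathrm{FM}_A(M) \in D^{\geq 0}(R)$ and $D_R(\mathrm{FM}_A(M)) \in D^{\geq 0}(R)$, and by Remark~\ref{rmk:CommAlgDuality} the latter says $\mathrm{codim}(\mathrm{Supp}\,H^{-i}(\mathrm{FM}_A(M))) \geq -(-i) = i$ when $-i \leq 0$, i.e. wait --- one needs $\mathrm{codim} \geq |{-i}| = i$. So $\mathrm{codim}(x) \geq i$ already. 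That gives codimension $\geq i$, not $\geq 2i$; the factor of $2$ must come from iterating the Hard Lefschetz/vanishing tension more cleverly.

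So the refined argument is: by Lemma~\ref{lem:HLVanishingResidueField}, if $\mathrm{codim}(x) < 2i$ then cup product with $c^i \in H^{2i}(A,k)$ induces the zero map $\mathcal{L}_{\kappa(x)} \to \mathcal{L}_{\kappa(x)}[2i]$ in $D(A,k)$, hence (applying $R\Gamma(A, M \otimes_k -)$, or rather using that the cup product on $H^*(A, M\otimes_k L_x)$ is induced by this map on $\mathcal{L}_{\kappa(x)}$) the Hard Lefschetz map $c^i: H^{-i}(A, M\otimes_k L_x) \to H^i(A, M\otimes_k L_x)$ is zero. But Hard Lefschetz (applied to the semisimplification as above, noting $c$ acts compatibly and the HL isomorphism passes to the subquotients, forcing the total map to be injective hence nonzero once the source is nonzero) says this map is nonzero as soon as $H^{-i}(A, M\otimes_k L_x) \neq 0$. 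Thus we need $H^{-i}(A, M\otimes_k L_x) = 0$, i.e. $x \notin S^{-i}(A,M)$. On the other hand, Hard Lefschetz for the semisimplification also gives $\dim H^{-i}(A,(M\otimes_k L_x)^{ss}) = \dim H^i(A,(M\otimes_k L_x)^{ss}) \geq \dim_{\text{something}} > 0$ whenever $H^i \neq 0$; combined with the fact that passing to $(M\otimes_k L_x)^{ss}$ preserves the (non)vanishing of each cohomology group $H^j(A,-)$ (same dimensions, since $R\Gamma(A,-)$ is exact on $\mathrm{Perv}(A,k)$... it is not exact, but it is additive on short exact sequences in the derived sense, so the Euler-characteristic-weighted count and an induction on length handle it), we get $H^{-i}(A, M \otimes_k L_x) \neq 0$, a contradiction.

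The main obstacle I anticipate is precisely the semisimplicity issue: Hard Lefschetz as stated (Theorem~\ref{thm:HL}) requires $N$ semisimple, but $M \otimes_k L_x$ for a general point $x$ of the support locus need not be semisimple. The cleanest fix is to argue that it suffices to prove the codimension bound for simple $M$ (the cohomology support locus of $M$ is contained in the union of those of the simple constituents of $M$, up to the perverse filtration --- one must check $\mathrm{FM}_A$ is exact-ish, or rather use the long exact sequences coming from the perverse filtration of $M\otimes_k L_x$ together with induction on perverse length), and then for simple $M$, observe that $M \otimes_k L_x$ is again simple (tensoring by a rank-one local system is an autoequivalence of $\mathrm{Perv}(A,k)$), so Hard Lefschetz applies on the nose. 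I expect this reduction-to-simple step, and verifying the compatibility of the cup-product-with-$c$ endomorphism of $\mathrm{FM}_A(M)$ with the Hard Lefschetz operator fibrewise, to be where the real care is needed; the rest is the dimension bookkeeping, which runs exactly as in the sketch above: $\mathrm{codim}(x) < 2i$ forces the HL map to vanish (Lemma~\ref{lem:HLVanishingResidueField}), contradicting its injectivity (Theorem~\ref{thm:HL}), so $\mathrm{codim}(\mathrm{Supp}\,H^i(\mathrm{FM}_A(M))) \geq 2i$.
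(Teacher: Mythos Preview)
Your core argument is right and matches the paper's: reduce to simple $M$, so that $M \otimes_k \mathcal{L}_{\kappa(x)}$ is simple perverse and Hard Lefschetz applies; then for any $x$ of codimension $< 2i$, Lemma~\ref{lem:HLVanishingResidueField} forces the cup-product map $c^i: \mathcal{L}_{\kappa(x)} \to \mathcal{L}_{\kappa(x)}[2i]$ to vanish, whence the Hard Lefschetz isomorphism $H^{-i}(A, M \otimes_k \mathcal{L}_{\kappa(x)}) \to H^i(A, M \otimes_k \mathcal{L}_{\kappa(x)})$ is simultaneously the zero map, and both groups are zero.

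There is, however, a gap in how you close the contradiction. You pick $x$ a generic point of $Z = \mathrm{Supp}\,H^i(\mathrm{FM}_A(M))$ and then assert that ``$x \in Z$ means $H^i(A, M \otimes_k L_x) \neq 0$''. This conflates the \emph{stalk} $H^i(\mathrm{FM}_A(M))_x$ (nonzero by choice of $x$) with the \emph{fiber} $H^i(\mathrm{FM}_A(M) \otimes^L_R \kappa(x)) = H^i(A, M \otimes_k L_x)$. These need not agree: Tor-contributions from $H^j(\mathrm{FM}_A(M))_x$ with $j > i$ can interfere, and your generic-point choice does nothing to rule them out. Without this step, the vanishing of $H^i(A, M \otimes_k L_x)$ that your Hard Lefschetz argument correctly produces does not contradict $x \in Z$.

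The paper closes this gap by running the argument as a \emph{descending induction} on $i$. For $i \gg 0$ the claim is vacuous since $\mathrm{FM}_A(M)$ is bounded. Assuming the codimension bound for all $H^k$ with $k > i$, any $x$ of codimension $< 2i < 2k$ satisfies $H^k(\mathrm{FM}_A(M))_x = 0$ for $k > i$; hence the complex $\mathrm{FM}_A(M)_x$ is concentrated in degrees $\leq i$, so $H^i(\mathrm{FM}_A(M) \otimes^L_R \kappa(x)) \simeq H^i(\mathrm{FM}_A(M))_x / \mathfrak{m}_x$, and Nakayama reduces stalk vanishing to fiber vanishing, which your Hard Lefschetz argument then supplies. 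This inductive bookkeeping is the one ingredient your sketch is missing.
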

\begin{proof}
We may assume that $M$ is simple since the conclusion behaves well under exact sequences. For simple $M$,  we work by descending induction on $i$. The claim is clearly true for $i \gg 0$ as $\mathrm{FM}_A(M)$ is bounded. Fix an integer $i > 0$ and a point $x \in X$ of codimension $< 2i$. We must show that $H^i (\mathrm{FM}_A(M))_x = 0$. Induction lets us assume that $H^k(\mathrm{FM}_A(M))_x = 0$ for $k > i$. This implies that $H^i(\mathrm{FM}_A(M) \otimes_R \kappa(x)) \simeq H^i(\mathrm{FM}_A(M))_x/\mathfrak{m}_x$, where $\mathfrak{m}_x \subset R_x$ is the maximal ideal. By Nakayama, it thus suffices to check that $H^i(\mathrm{FM}_A(M) \otimes_R \kappa(x)) = 0$. Thanks to Lemma~\ref{lem:FibersFM}, this is equivalent to checking that $H^i(A, M \otimes_k \mathcal{L}_{\kappa(x)}) = 0$. Fix a class $c \in H^2(A,k)$ corresponding to the Chern class of an ample line bundle. The Hard Lefschetz theorem (see Theorem~\ref{thm:HL}) implies that the cup product with $c^i$ map
\[ \alpha:H^{-i}(A, M \otimes_k \mathcal{L}_{\kappa(x)}) \xrightarrow{c^i} H^{i}(A, M \otimes_k \mathcal{L}_{\kappa(x)})\]
is an isomorphism; here we implicitly use that $M \otimes_k \mathcal{L}_{\kappa(x)}$ is a semisimple perverse sheaf over $\kappa(x)$.\footnote{To see this, one can assume that $k$ is algebraically closed, in which case $M\otimes_k \mathcal L_{\kappa(x)}$ is actually simple, as one sees for example by using the classification of simple perverse sheaves as intermediate extensions of local systems.} This map is induced by applying the functor $H^0(A, M \otimes_k -)$ to the cup product with $c^i$ map
\[ \mathcal{L}_{\kappa(x)}[-i] \xrightarrow{c^i} \mathcal{L}_{\kappa(x)}[i].\]
Since $x$ has codimension $< 2i$, Lemma~\ref{lem:HLVanishingResidueField} tells us that this last map is $0$, and hence so is the map labelled $\alpha$ above. Thus, $\alpha$ is both the $0$ map and an isomorphism, so $H^i(A, M \otimes_k \mathcal{L}_{\kappa(x)}) = 0$, as wanted.
\end{proof}

\begin{proof}[Proof of Theorem~\ref{thm:codimviaHL}]
We first show that if $M \in \mathrm{Perv}(A,k)$, then $\mathrm{codim}(S^i(A,M)) \geq 2i$ for all $i$. A point $x \in X$ lies in $S^i(A,M)$ exactly when $H^i(\mathrm{FM}_A(M)(x)) \neq 0$. If $\mathrm{codim}(x) < 2i$, then Theorem~\ref{thm:CodimHLSupport} implies that $\mathrm{FM}_A(M)_x \in D^{< i}(R_x)$, and hence $H^i(\mathrm{FM}_A(M)(x)) = 0$;  this shows that any $x \in S^i(A,M)$ has $\mathrm{codim}(x) \geq 2i$.

The claim in (1) follows formally from the previous paragraph by expressing $M$ as an iterated extension of shifted perverse sheaves. For (2), observe that for any $x \in X$, the complex $R\Gamma(A, M \otimes_k L_x)$ is the $\kappa(x)$-linear dual of $R\Gamma(A, D_{A,k}(M) \otimes_k L_{x^{-1}})$: this results from the commutation of Verdier duality with $R\Gamma(A,-)$ and the formula $D_{A,\kappa(x)}(M \otimes_k L_x) = D_{A,k}(M) \otimes_k L_{x^{-1}}$. Hence, we have an equality 
\[ S^i(A,M) = \mathrm{inv}^* S^{-i}(A, D_{A,k}(M))\]
 as subspaces of $X$ (where $\mathrm{inv}:X \to X$ denotes inversion on the torus $X$), which immediately yields (2) from (1).
\end{proof}

\begin{remark}
It seems natural to wonder if Theorem~\ref{thm:codimviaHL} continues to hold for abelian varieties over a field of positive characteristic. More precisely, given an abelian variety $A$ over an algebraically closed field of characteristic $p$ and a prime $\ell$ different from $p$, one may define the ``open unit disc'' version $\widehat{\mathrm{Char}(A)}$ (as a rigid space over $\mathbf{Q}_\ell$) of the character variety $\mathrm{Char}(A)$, together with a Fourier transform functor $\widehat{\mathrm{FM}_A}:D^b_c(A, \mathbf{Q}_\ell) \to D^b_{coh}(\widehat{\mathrm{Char}(A)})$. One may then ask if Theorem~\ref{thm:CodimHLSupport} holds true for $\widehat{\mathrm{FM}_A}(M)$. For perverse sheaves $M$ of geometric origin, the main obstacle is proving the Hard Lefschetz theorem for the sheaves $M \otimes \mathcal{L}_{\chi}$ for varying characters $\chi$. We do not know how to prove this result; note that Drinfeld's conjecture $\mathrm{Kash}_\ell(k)$ from \cite[\S 1.7]{DrinfeldKashiwara} predicts a positive answer to a much more general version of this question.
\end{remark}

\section{Linearity}

Fix an abelian variety $A$ over $\mathbf{C}$, and let $k$ be a field of characteristic $0$. We follow the notation of \S \ref{sec:FMTori} above. Our goal is to show that the Fourier transform of a perverse sheaf has good linearity properties near the origin of $\mathrm{Char}(A)$, so let $S\simeq \widehat{\mathrm{Sym}}(H_1(A,k))$ be the completed local ring of $\mathrm{Char}(A)$ at the origin. The main theorem of this section is:

\begin{theorem}
\label{thm:Linearity}
Let $M \in \mathrm{Perv}(A,k)$ be a simple perverse sheaf of geometric origin. Then the completed stalk at $1\in \mathrm{Char}(A)$ of $\mathrm{FM}_A(M) \in D^b_{coh}(\mathrm{Char}(A))$ is represented by the $S$-complex
\[ \cdots \to H^{i-1}(A,M) \otimes_k S \to H^i(A,M) \otimes_k S \to H^{i+1}(A,M) \otimes_k S \to \cdots, \]
where the differential arises from the natural map $H^i(A,M) \to H^{i+1}(A,M) \otimes_k H_1(A,k)$ that is adjoint to the cup product  $H^1(A,k) \otimes_k H^i(A,M) \to H^{i+1}(A,M)$.
\end{theorem}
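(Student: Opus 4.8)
The plan is to prove Theorem~\ref{thm:Linearity} by a formality argument. Recall from the discussion preceding Proposition~\ref{prop:PullbackFF} that $\mathrm{FM}_A(M) = R\Gamma(A, M\otimes_k \mathcal L_R)$ and that the completed stalk at $1$ is $R\Gamma(A, M\otimes_k \mathcal L_R)\otimes_R S = R\Gamma(A, M\otimes_k \mathcal L_S)$, where $\mathcal L_S := \mathcal L_R\otimes_R S$ is the ``universal'' rank-one local system of $S$-modules on $A$ restricted to the formal neighborhood of the trivial character. Since $S \simeq \widehat{\mathrm{Sym}}(H_1(A,k))$ is a complete local ring with residue field $k$, the local system $\mathcal L_S$ carries a canonical filtration (by powers of $\mathfrak m_S$) whose associated graded is the constant sheaf $\mathrm{gr}(S)= \mathrm{Sym}(H_1(A,k))$, and the resulting $\mathrm{Sym}(H_1(A,k))$-linear complex $R\Gamma(A, M)\otimes_k \mathrm{Sym}(H_1(A,k))$ with its Koszul-type differential is exactly the BGG-dual of the complex $R\Gamma(A, M)$ regarded as a module over $\mathrm{Sym}(H^1(A,k)^\vee) = \mathrm{Sym}(H_1(A,k))$, i.e. it is the classical ``derivative complex'' of Green--Lazarsfeld. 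So the theorem is equivalent to the assertion that $R\Gamma(A, M\otimes_k\mathcal L_S)$, as an $S$-complex, is \emph{formal}: it is quasi-isomorphic to its cohomology, equipped with the differential induced by cup product with $H^1(A,k)$. This is a derived-category statement about a module over the exterior algebra $\Lambda^\bullet H^1(A,k)$ (which acts on $R\Gamma(A,M)$ by cup product), and via the Koszul/BGG equivalence between modules over $\Lambda^\bullet H^1(A,k)$ and modules over $\mathrm{Sym}(H_1(A,k))$, formality over $S$ is equivalent to formality of $R\Gamma(A,M)$ as a dg-module over the formal dg-algebra $\Lambda^\bullet H^1(A,k)$.

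The key point, and the reason the hypotheses ``$M$ simple of geometric origin'' and ``$\mathrm{char}\,k = 0$'' enter, is to establish this formality by reduction to characteristic $p$ and the theory of weights. First I would spread $A$ and $M$ out over a finitely generated $\mathbf Z$-subalgebra of $\mathbf C$ and specialize to a model over a finite field $\mathbf F_q$, so that $M$ becomes a pure perverse sheaf (up to shift) of some weight $w$ on an abelian variety over $\overline{\mathbf F_q}$ — using that simple perverse sheaves of geometric origin are pure by the Weil~II formalism of \cite{BBD}, \cite{DeligneWeil2}. Then $H^\bullet(A,M)$ carries Frobenius weights, with $H^i$ pure of weight $w+i$, and the cup-product action of $H^1(A,k)$ (which is pure of weight $1$ — here crucially $A$ is an abelian variety, so $H^1(A,k)\cong \Lambda^1$ generates the whole cohomology ring, or at least acts with the correct weight) is a morphism of weight $1$. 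The standard ``purity implies formality'' principle (Deligne--Griffiths--Morgan--Sullivan in the classical setting; its $\ell$-adic/Frobenius-weight analog) then shows that the dg-module $R\Gamma(A,M)$ over $\Lambda^\bullet H^1(A,k)$ is formal, because any two multiplicative/module structures inducing the same map on cohomology and respecting a weight grading that is strictly compatible with cohomological degree must be connected by a filtered quasi-isomorphism — obstructions to formality live in groups that are forced to vanish by weight reasons. This is the step where Lurie's input (acknowledged in the paper) is presumably used: to package the ``weights $\Rightarrow$ formality'' argument cleanly, e.g.\ via an action of $\mathbf G_m$ (the weight torus) on the relevant dg/$E_\infty$-algebra and a contracting homotopy.

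Having proven formality of $R\Gamma(A,M)$ as a $\Lambda^\bullet H^1(A,k)$-module over the finite field, I would descend back to characteristic $0$: formality is a statement about the existence of a quasi-isomorphism in a derived category of dg-modules over a fixed (formal) dg-algebra, and such statements spread out and lift along the specialization, using that $H^\bullet(A,M)$ and the cup-product maps are computed compatibly in the $\ell$-adic and Betti realizations (smooth and proper base change, comparison theorems). Finally I would translate the formality statement through the BGG correspondence: formality of $R\Gamma(A,M)$ over $\Lambda^\bullet H^1(A,k)$ says precisely that the corresponding $\mathrm{Sym}(H_1(A,k))$-complex — which by the filtration/associated-graded discussion above computes $R\Gamma(A, M\otimes_k\mathcal L_S)$ after completion, once one checks the spectral sequence of the $\mathfrak m_S$-adic filtration degenerates (itself a consequence of formality) — is the linear ``derivative complex'' with differential given by the stated adjoint-to-cup-product map.

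The main obstacle I anticipate is the formality step: carefully setting up the $E_\infty$- or dg-module structure on $R\Gamma(A,M)$ over $R\Gamma(A,k)\simeq \Lambda^\bullet H^1(A,k)$, verifying that the Frobenius weights are strictly compatible with the cohomological grading in the precise sense needed (including that the module structure map has the right weight), and invoking a clean ``purity $\Rightarrow$ formality'' theorem in the $\ell$-adic setting — together with the bookkeeping needed to transport this between the arithmetic ($\overline{\mathbf F_q}$) and complex-analytic worlds and then to pass from the formal/graded picture back to the actual completed stalk $\mathrm{FM}_A(M)\otimes_R S$. In particular one must ensure the identification $\mathrm{gr}_{\mathfrak m_S}^\bullet \mathcal L_S \simeq \mathrm{Sym}(H_1(A,k))$ interacts correctly with the weight filtration so that no extension problems survive.
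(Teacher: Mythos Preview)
Your proposal is correct and takes essentially the same approach as the paper: spread out to characteristic $p$, use purity/weights to prove that $R\Gamma(A,M)$ is formal as a module over $E = R\Gamma(A,k) \simeq \Lambda^\bullet H^1(A,k)$, and then translate via BGG/Koszul duality to obtain the linear $S$-complex. The one streamlining in the paper's implementation is that, instead of the $\mathfrak{m}_S$-adic filtration on $\mathcal{L}_S$ and a spectral-sequence/extension argument, it invokes a direct $\infty$-categorical equivalence $F = \mathrm{RHom}_S(k,-)\colon D_{\mathrm{perf}}(S) \xrightarrow{\sim} D_{\mathrm{coh}}(E)$, computes $F\big(\widehat{\mathrm{FM}}_A(M)\big) \simeq R\Gamma(A,M)$ (up to a shift and a one-dimensional twist), and reads off the answer from the explicit inverse $G$ applied to the formal $E$-module $H^*(A,M)$---so the extension problems you flag never arise.
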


Note that one can apply the theorem also to twists of $M$ by torsion characters, so a similar result holds for the completion of $\mathrm{FM}_A(M)$ at torsion points in $\mathrm{Char}(A)$.

The strategy for proving the theorem is as follows. As the perverse sheaf $M$ is of geometric origin, we may specialize to the algebraic closure of a finite field. Now, by a version of the BGG correspondence, we reduce the linearity assertion above to a formality statement for the action of $R\Gamma(A,k)$ on $R\Gamma(A,M)$. This formality is then deduced from Deligne's theory \cite{DeligneWeil2} of weights (as cast in \cite{BBD}).

We recall the relevant version of the BGG correspondence in \S \ref{ss:BGG} using the language of $\infty$-categories \cite{LurieHTT} and higher algebra \cite{LurieHA}. The payoff for bringing in these tools is the material in \S \ref{ss:weights}: we give a quick deduction of some rather strong formality results that follow almost immediately from the theory of weights using higher algebra. With this ingredients in place, the strategy outlined in the previous paragraph is implemented in \S \ref{ss:LinearityCharp} in characteristic $p$, and the characteristic $0$ case follows by spreading-out.

\subsection{Recollections on the symmetric and exterior algebra duality}
\label{ss:BGG}

We begin with reminders on the Koszul duality relating exterior and symmetric algebras. The main difference, when compared to most standard standard  references, is that we do not restrict to the graded setting. All derived categories appearing in this section are viewed as stable $k$-linear $\infty$-categories (or, equivalently, differential graded (dg) categories over $k$, up to quasi-equivalence) in the sense of \cite[Chapter 1]{LurieHA}. The basic objects of interest are defined next:

\begin{notation}
Fix a field $k$ and a $k$-vector space $V$ of dimension $d$ with dual $W$. Let $S_0 = \mathrm{Sym}^*(V)$, and let $S$ be the completion of $S_0$ at the augmentation $S_0 \to k$ given by $V \mapsto 0$. Let $E := \mathrm{RHom}_S(k,k)$, viewed as an $E_1$-$k$-algebra or, equivalently, as an $E_1$-algebra in the symmetric monoidal category $D(k)$\footnote{For any commutative ring $A$, the notion of an $E_1$-$A$-algebra is one formalization of a homotopy-theoretically robust notion of an ``associative algebra in $D(A)$''; this notion is equivalent to that of either $A_\infty$-$A$-algebras or differential graded $A$-algebra (or, for short, $A$-dga), see \cite[Proposition 7.1.4.6]{LurieHA}.}. Then $V \simeq \mathfrak{m}/\mathfrak{m}^2$ is the cotangent space of $S$, and $H^*(E)$ is an exterior algebra on $W$. Moreover, attached to $E$, one has the derived $\infty$-category $D(E)$ with its distinguished object $k \in D(E)$. Let $D_{coh}(E) \subset D(E)$ be the full $\infty$-subcategory of those $M \in D(E)$ that have finite dimensional total homology (as $k$-modules); this is also smallest full stable $\infty$-subcategory that contains $k$, see Lemma~\ref{lem:DCohEGenerator} below. We will occassionally use the the natural $\mathbf{G}_m$-action\footnote{One can work equivalently with $\mathbf{Z}$-graded objects. However, to avoid confusing this grading with the cohomological degree, we stick to the $\mathbf{G}_m$-action perspective.} on $S_0$ (which gives $\mathrm{Sym}^n(V)$ weight $n$); this induces a $\mathbf{G}_m$-action on $E$ giving $H^i(E) \simeq \wedge^i W$ weight $-i$. We write a superscript of $\mathbf{G}_m$ on a derived $\infty$-category to denote the $\mathbf{G}_m$-equivariant version of the derived $\infty$-category; for example $D(k)^{\mathbf{G}_m}$ describes the derived $\infty$-category of graded $k$-vector spaces, and the $\mathbf{G}_m$-action on $V$ naturally lifts $E$ to an object of $D(k)^{\mathbf{G}_m}$.
\end{notation}

The following notion of formality for $E_1$-$k$-algebras will play an important role in the sequel.

\begin{definition}
\label{def:Formality}
An $E_1$-$k$-algebra $A$ is {\em formal} if there exists an isomorphism $\alpha_A:A \simeq H^*(A)$ of $E_1$-$k$-algebras inducing the identity on cohomology; here $H^*(A)$ is viewed as a differential graded $k$-algebra with trivial differential and $H^i(A)$ living in cohomological degree $i$. 
\end{definition}

We begin by observing that $E$ is formal in the best possible way:

\begin{lemma}
\label{lem:ExtAlgebraFormal}
The $E_1$-$k$-algebra $E$ is canonically $\mathbf{G}_m$-equivariantly formal, i.e., there exists a unique (up to contractible ambiguity) $\mathbf{G}_m$-equivariant isomorphism $E \stackrel{\alpha}{\simeq} H^*(E)$ of $E_1$-$k$-algebras in $D(k)^{\mathbf{G}_m}$ inducing the identity on cohomology.

\end{lemma}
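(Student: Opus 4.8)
The plan is to prove that $E = \mathrm{RHom}_S(k,k)$ is $\mathbf{G}_m$-equivariantly formal by exploiting the weight grading, which forces all potential obstructions to vanish. Recall that $S$ carries a $\mathbf{G}_m$-action giving $V$ weight $1$, hence $E$ inherits a $\mathbf{G}_m$-action under which $H^i(E) \simeq \wedge^i W$ sits in weight $-i$. The key point is that on the $i$-th cohomology, \emph{cohomological degree equals minus the weight}. This rigidity is exactly what kills all the $A_\infty$-obstructions.

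First I would set up the problem via Hochschild cohomology (or, equivalently, via the moduli of $A_\infty$-structures / the obstruction theory for $E_1$-algebras). Formality of a graded $E_1$-algebra $A$ with $H^*(A)$ concentrated so that $H^i$ lives in internal weight $-i$ is governed by successive obstruction classes $m_n$ ($n \geq 3$) living in the Hochschild cochain groups $\mathrm{HH}^{?}(H^*(E))$, and the relevant bookkeeping is: the $n$-ary Massey-type operation $m_n$ has cohomological degree $2-n$ and, crucially, internal weight $0$ relative to the grading normalization where $m_2$ is weight-preserving. Concretely, writing $H^*(E) = \wedge^* W$ with $W$ in cohomological degree $1$ and weight $-1$, a component of $m_n$ lands in $\mathrm{Hom}(\wedge^{a_1}W \otimes \cdots \otimes \wedge^{a_n} W, \wedge^b W)$; matching cohomological degree gives $b = \sum a_j + (2-n)$, while $\mathbf{G}_m$-equivariance forces the weight on target and source to agree, i.e.\ $-b = -\sum a_j$, hence $b = \sum a_j$, hence $2-n = 0$, i.e.\ $n = 2$. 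So for $n \geq 3$ there is simply no room: every obstruction group vanishes for weight reasons, and likewise every group of ``gauge ambiguities'' collapses to scalars, which is what yields uniqueness up to contractible choice.

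To make this precise in the $\infty$-categorical language of \cite{LurieHA}, I would phrase it as follows. Consider the space of $\mathbf{G}_m$-equivariant $E_1$-$k$-algebra structures on the graded $k$-module underlying $H^*(E)$ that induce the standard multiplication on cohomology; call this space $\mathcal{F}$. One wants $\mathcal{F}$ to be contractible and to contain $E$. Contractibility follows from a Postnikov/obstruction-tower argument: the tower computing $\mathcal{F}$ has associated graded controlled by the $\mathbf{G}_m$-equivariant (i.e.\ weight-$0$) part of truncated Hochschild cohomology of the exterior algebra $\wedge^* W$, specifically the groups $\mathrm{HH}^{n}\big(\wedge^* W\big)_{\text{weight } 0}$ in the degrees $n$ relevant to $E_1$-structures (the cotangent-complex / operadic deformation complex of the associative operad). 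The weight-counting above shows each such group vanishes except in the single ``trivial'' spot corresponding to the multiplication itself, so every stage of the tower is (the relevant truncation of) a point, and $\mathcal{F}$ is contractible. Since $E$, equipped with its natural $\mathbf{G}_m$-action, defines a point of $\mathcal{F}$ — its cohomology ring is the exterior algebra with the asserted weights — we get the desired canonical equivalence $\alpha: E \simeq H^*(E)$, unique up to contractible ambiguity.

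The main obstacle, and the step I would spend the most care on, is setting up the grading conventions precisely enough that the weight-vs-degree matching argument is airtight — in particular, being careful that the $\mathbf{G}_m$-action on $E = \mathrm{RHom}_S(k,k)$ really does put $H^i(E)$ in weight $-i$ (this is where the completion $S$ of $\mathrm{Sym}^*(V)$ versus $S_0$ matters only insofar as it does not affect $\mathrm{RHom}_S(k,k)$, since $k$ is already $\mathfrak{m}$-complete), and that the deformation-theoretic control groups are indexed as claimed. A cleaner alternative, which I would mention, is to avoid explicit Hochschild computations entirely: build $\alpha$ directly by choosing a minimal model, or by invoking the standard Koszul-duality identification of $E$ with the $\mathbf{G}_m$-equivariant Koszul complex, whose differential is manifestly determined by the pairing $V \otimes W \to k$ and which visibly has exterior-algebra cohomology with the stated weights; then formality is the statement that this explicit small model \emph{is} its cohomology after a weight-homogeneous change of basis, and uniqueness again reduces to the vanishing of weight-$0$ automorphisms in negative cohomological degree. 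Either route, the heart of the matter is the same numerical coincidence $2 - n \neq 0$ for $n \geq 3$.
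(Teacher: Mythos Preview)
Your proposal is correct and rests on the same underlying observation as the paper --- that $H^i(E)$ sits in weight $-i$, so the weight grading pins down the $E_1$-structure --- but you implement it via the $A_\infty$/Hochschild obstruction tower, checking operation by operation that $m_n$ is forced to vanish for $n \geq 3$. The paper's argument short-circuits all of this with a single categorical move: it considers the full $\infty$-subcategory $D_{w=0} \subset D(k)^{\mathbf{G}_m}$ spanned by complexes $K$ with $H^i(K)$ of weight exactly $-i$, observes that this subcategory is closed under $\otimes$ and contains $E$, and then notes that \emph{every} object of $D_{w=0}$ is canonically isomorphic to the direct sum of its shifted cohomology groups (since a map between single-degree, single-weight pieces is zero unless both degree and weight agree). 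Formality of any $E_1$-algebra in $D_{w=0}$ --- with canonical, hence unique-up-to-contractible-choice, formality isomorphism --- falls out immediately.

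What your approach buys is explicitness: you see exactly which obstruction and automorphism groups vanish, and the argument would adapt to settings where only partial weight information is available. What the paper's approach buys is economy and robustness: there is no obstruction tower to set up, no grading conventions to track through a Postnikov filtration, and the uniqueness comes for free from the canonicity of the splitting rather than from a separate analysis of gauge ambiguities. Your ``cleaner alternative'' paragraph is in fact gesturing toward the paper's argument; the paper simply commits to that route from the start.
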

\begin{proof}
This follows from exactly the same argument as the one used in Proposition~\ref{prop:PureComplexes}. Indeed, consider the full $\infty$-subcategory $D_{w=0}$ of $D(k)^{\mathbf{G}_m}$ spanned by complexes $K$ with $H^i(K)$ have weight exactly $-i$. The symmetric monoidal structure on $D(k)^{\mathbf{G}_m})$ induces one on $D_{w=0}$, and our definition of the $\mathbf{G}_m$-action on $V$ show that $E$ naturally lifts to an $E_1$-algebra in $D_{w=0}$. But every object in $D_{w=0}$ is canonically a direct sum of its cohomology groups (for weight reasons), so all $E_1$-algebras in $D_{w=0}$ are formal.
\end{proof}

For the rest of this section, we fix the (essentially unique) formality isomorphism $\alpha:E \simeq H^*(E) \simeq \wedge^* W$.

\begin{remark}
Let $(R,\mathfrak{m})$ be any regular local $k$-algebra with residue field $k$. Then we have $\mathrm{RHom}_R(k,k) \simeq \mathrm{RHom}_{\widehat{R}}(k,k)$ as $E_1$-$R$-algebras, where $\widehat{R}$ is the $\mathfrak{m}$-adic completion. Since $\widehat{R} \simeq S$ (non-canonically), we learn that $\mathrm{RHom}_R(k,k)$ is also formal by Lemma~\ref{lem:ExtAlgebraFormal}. 
\end{remark}

\begin{remark}
\label{rmk:GrModtoDerived}
Let $M^*$ be a graded $\wedge^* W$-module. We may view $M^*$ as a complex with trivial differential by placing $M^i$ in degree $i$; this complex is naturally a dg-module over the $k$-dga $\wedge^* W$. Let $\mathbf{G}_m$ act on this complex by giving the term $M^i$ weight $-i$. The resulting complex is a $\mathbf{G}_m$-equivariant dg-module over the $k$-dga $\wedge^* W$. Via the formality isomorphism $\alpha:E \simeq \wedge^* W$, this construction gives a functor
\[ \mathrm{Mod}^{gr}(\wedge^* W) := \{ \mathrm{graded} \wedge^* W\mathrm{-modules}\} \xrightarrow{\Phi} D(E)^{\mathbf{G}_m}. \]
If one restricts to finitely generated graded modules on the left side, one obtains a functor
\[  \mathrm{Mod}^{gr}_{fg}(\wedge^* W) \xrightarrow{\Phi} D_{coh}(E)^{\mathbf{G}_m}.\]
This functor will play a crucial role in this section.
\end{remark}

In order to do calculations in $D_{coh}(E)$, such as those of $\mathrm{Hom}$-sets, we need an effectively computable projective resolution of the generator $k$. A standard such resolution arises from a variant of the Koszul complex, recorded next:

\begin{construction}
\label{cons:ResolutionE}
The following quasi-isomorphism of $E$-complexes gives a $\mathbf{G}_m$-equivariant resolution of $k$:
\begin{equation}
\label{eq:ResolutionE} \Big(.... \to \Gamma^n(W)[-n] \otimes_k E \to  ...\to \Gamma^2(W)[-2] \otimes_k E \to W[-1] \otimes_k E \to E\Big) \xrightarrow{\simeq} k.
\end{equation}
Here $\Gamma^n(W) \simeq (\mathrm{Sym}^n(V))^\vee$ has weight $-n$. To describe the differential, note that 
\begin{equation}
 \begin{split}
 \mathrm{Hom}^{\mathbf{G}_m}_E(\Gamma^n(W)[-n] \otimes_k E, \Gamma^{n-1}[-n](W) \otimes_k E) & \simeq \mathrm{Hom}^{\mathbf{G}_m}_k(\Gamma^n(W)[-1], \Gamma^{n-1}(W) \otimes_k E) \\
& \simeq \mathrm{Hom}^{\mathbf{G}_m}_k(\mathrm{Sym}^n(V)^\vee, \mathrm{Sym}^{n-1}(V)^\vee \otimes_k E[1]) \\
& \simeq \mathrm{Hom}_k(\mathrm{Sym}^n(V)^\vee, \mathrm{Sym}^{n-1}(V)^\vee \otimes_k V^\vee).
\end{split}
\end{equation}
Under this isomorphism, the differential corresponds to the obvious map $\mathrm{Sym}^n(V)^\vee \to \mathrm{Sym}^{n-1}(V)^\vee \otimes V^\vee$, dual to multiplication in the symmetric algebra. The reason this defines a complex is that the canonical map $\mathrm{Sym}^{n-1}(V) \otimes V \otimes V \to \mathrm{Sym}^{n+1}(V)$ vanishes on the subspace of $V \otimes V$ that is dual to the quotient $W \otimes W \to \wedge^2 W$. 
\end{construction}

Using this resolution, we check that $k$ generates $D_{coh}(E)$, as promised earlier.

\begin{lemma}
\label{lem:DCohEGenerator}
The category $D_{coh}(E)$ is the smallest stable $\infty$-subcategory of $D(E)$ that contains $k$.
\end{lemma}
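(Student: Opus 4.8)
The statement to prove is Lemma~\ref{lem:DCohEGenerator}: that $D_{coh}(E)$ is the smallest stable $\infty$-subcategory of $D(E)$ containing $k$. One inclusion is essentially a definition-chase: the smallest stable subcategory containing $k$ is contained in $D_{coh}(E)$, since $k$ itself has finite-dimensional total homology and this property is preserved under shifts and cofiber sequences (a two-out-of-three argument on long exact sequences in homology). So the content is the reverse inclusion: every $M \in D_{coh}(E)$, i.e. every $E$-module with $\dim_k H^*(M) < \infty$, can be built from finitely many copies of shifts of $k$ using cofiber sequences and retracts.

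The plan is to argue by induction on $\dim_k H^*(M)$. The base case $H^*(M) = 0$ is trivial ($M \simeq 0$). For the inductive step, let $M \neq 0$ and pick the largest integer $n$ with $H^n(M) \neq 0$ (using boundedness, which follows from finite total homology together with $E$ having bounded homology concentrated in degrees $0,\dots,d$; more carefully, one should first reduce to $M$ bounded — finite total homology forces $M \in D^b(E)$ automatically). Choosing a nonzero class in $H^n(M)$ and using the augmentation $E \to k$ (equivalently, the fact that $k$ is a compact generator-type object computed via the Koszul-type resolution of Construction~\ref{cons:ResolutionE}), one produces a nonzero map $k[-n] \to M$ in $D(E)$, or dually a nonzero map $M \to k[-n]$; either way, forming the fiber (resp. cofiber) $M'$ sits in a cofiber sequence involving $M$ and $k[-n]$, and a look at the long exact homology sequence shows $\dim_k H^*(M') < \dim_k H^*(M)$. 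By induction $M'$ lies in the smallest stable subcategory containing $k$, and hence so does $M$. The only subtlety is checking that the map to/from $k[-n]$ can be chosen to strictly drop total homology: one wants it to be surjective (resp. injective) on $H^n$, which is arranged because $H^0(\mathrm{RHom}_E(k, k)) = k$ acts as scalars, so any nonzero element of $\mathrm{Hom}_{D(E)}(k[-n], M) = H^n(\mathrm{RHom}_E(k,M))$ realizing a chosen nonzero class in $H^n(M)$ does the job after noting $H^n(\mathrm{RHom}_E(k,M)) \twoheadrightarrow \mathrm{Hom}_k(k, H^n(M))$ via the edge map of the spectral sequence computing $\mathrm{RHom}_E(k,M)$ from the resolution~\eqref{eq:ResolutionE}.

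The main obstacle is the bookkeeping in that last point: making precise that one can always detach a single "top" copy of $k[-n]$ so that the homological size strictly decreases. Concretely, I would use the resolution in Construction~\ref{cons:ResolutionE} to identify $\mathrm{RHom}_E(k, M)$ with the complex $\big(M \to W^\vee \otimes M \to \Gamma^2(W)^\vee \otimes M \to \cdots\big)$ — wait, more precisely $\mathrm{Hom}_E(\Gamma^\bullet(W)[-\bullet]\otimes_k E, M) \simeq \mathrm{Sym}^\bullet(V) \otimes_k M[\bullet]$ with the Koszul differential — and read off that in the top cohomological degree $n$ the term is simply $H^n(M)$ with no incoming differential, so the edge map $H^n(\mathrm{RHom}_E(k,M)) \to H^n(M)$ is surjective. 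Picking a preimage of a nonzero class gives the desired map $k[-n] \to M$, and taking cofiber kills that class in $H^n$ while introducing nothing in degrees $> n$ and nothing new in degree $n$, so $\dim_k H^*$ drops by at least one. With that in hand the induction closes and the lemma follows. Everything else (stability of "finite total homology" under the triangulated operations, reduction to bounded complexes) is routine.
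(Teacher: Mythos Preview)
Your proposal is correct and takes essentially the same approach as the paper. Both proofs argue by induction on $\dim_k H^*(M)$: using the resolution of Construction~\ref{cons:ResolutionE} to show that $\mathrm{Hom}_{D(E)}(k[-n],M) \to H^n(M)$ is surjective when $n$ is the top nonvanishing degree (the paper phrases this as ``any map $E[-n] \to M$ factors through $k[-n]$'', which is the same statement), then peeling off a copy of $k[-n]$ to strictly decrease the total homology dimension.
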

\begin{proof}
Using the resolution from Construction~\ref{cons:ResolutionE}, one checks that if $M \in D(E)$ lies in $D^{\leq n}(k)$, then any map $E[-n] \to M$ factors through the canonical map $E[-n] \to k[-n]$. More precisely, the complexes $\mathrm{RHom}_E(k[-n],M)$ and $\mathrm{RHom}_E(E[-n],M)$ are connective, and the canonical map from first one to the second one is surjective on $H^0$. In particular, if $M \in D_{coh}(E)$, then one can find a map $k[-n] \to M$ in $D_{coh}(E)$ whose cone $Q$ has smaller dimensional total homology than $M$ as a $k$-vector space. Proceeding this way, it follows that $D_{coh}(E)$ is generated by $k$ under finite direct sums, shifts, and cones.
\end{proof}

The next lemma relates perfect complexes on $S$ to coherent complexes on $E$; the idea here is roughly that perfect complexes on $S$ can be computed via {\em derived} Cech descent along the map $\mathrm{Spec}(k) \to \mathrm{Spec}(S)$. This lemma may also be viewed as a variant of the BGG construction, the main difference being that we do not restrict to the graded setting. 

\begin{proposition}
\label{prop:BGGNonGraded}
The functor $F := \mathrm{RHom}_S(k,-)$ gives an equivalence $D_{perf}(S) \to D_{coh}(E)$ with inverse $G := \mathrm{RHom}_E(k,-)[d] \otimes_k \wedge^d V$.
\end{proposition}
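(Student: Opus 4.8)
The plan is to show $F$ and $G$ are well-defined and mutually inverse by leveraging the generation statement from Lemma~\ref{lem:DCohEGenerator} together with the explicit resolution from Construction~\ref{cons:ResolutionE}. First I would check that $F = \mathrm{RHom}_S(k,-)$ carries $D_{perf}(S)$ into $D_{coh}(E)$: the module $k$ is a perfect $E$-module-valued object since $\mathrm{RHom}_S(k,k) = E$ by definition, and for $P \in D_{perf}(S)$ the complex $\mathrm{RHom}_S(k,P)$ has finite-dimensional total cohomology over $k$ because $P$ is built from finitely many copies of $S$ and $\mathrm{RHom}_S(k,S)$ is (up to shift) $\wedge^d V$, i.e. finite-dimensional. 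So $F$ lands in $D_{coh}(E)$. Symmetrically, $G = \mathrm{RHom}_E(k,-)[d] \otimes_k \wedge^d V$ carries $k \in D_{coh}(E)$ to $\mathrm{RHom}_E(k,k)[d]\otimes_k \wedge^d V$; using the Koszul-type resolution~\eqref{eq:ResolutionE} one computes $\mathrm{RHom}_E(k,k) \simeq \widehat{\mathrm{Sym}}(V) = S$ (the $\Gamma^n(W)^\vee = \mathrm{Sym}^n(V)$ terms assemble, after taking cohomology, into the completed symmetric algebra), and the shift/twist are arranged precisely so that $G(k) \simeq S \in D_{perf}(S)$. Since $D_{coh}(E)$ is generated by $k$ under shifts, cones and finite sums by Lemma~\ref{lem:DCohEGenerator}, and $G$ is exact, $G$ indeed takes values in $D_{perf}(S)$.

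Next I would establish that $F$ and $G$ are adjoint (say $G$ right adjoint to $F$, or vice versa, depending on variance bookkeeping) and that the unit and counit are equivalences. The cleanest route: both $F$ and $G$ are exact functors between stable $\infty$-categories, each compactly generated by a single object ($S$ on one side, $k$ on the other), and it suffices to check the unit/counit are equivalences on these generators. On generators this is exactly the double-duality computation $\mathrm{RHom}_E(k, \mathrm{RHom}_S(k,S)) \simeq S$ and $\mathrm{RHom}_S(k, \mathrm{RHom}_E(k,k)) \simeq k$ (with the shifts by $[d]$ and twists by $\wedge^d V$ cancelling), which one verifies using Construction~\ref{cons:ResolutionE} and its symmetric-algebra analogue. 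Formally, $GF(S) \simeq \mathrm{RHom}_E(k, E)[d]\otimes \wedge^d V$, and the resolution~\eqref{eq:ResolutionE} computes $\mathrm{RHom}_E(k,E)$ as the $S$-dual Koszul complex, whose cohomology is $\wedge^d V^\vee$ concentrated in degree $d$; after the twist and shift this is $S$, and one checks the resulting map $S \to GF(S)$ is the identity. The other composite is handled the same way with the roles of $S$ and $E$ exchanged.

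The main obstacle I anticipate is bookkeeping: getting the shifts, the exterior-power twists $\wedge^d V$, and the completion-versus-polynomial-ring distinction to all line up, and in particular making sure the equivalence is compatible with (or at least can be upgraded to) the $\mathbf{G}_m$-equivariant structure, since the functor $\Phi$ of Remark~\ref{rmk:GrModtoDerived} and the downstream applications will want the graded refinement. The completion is genuinely important here — $\mathrm{RHom}_E(k,k)$ naturally produces the completed symmetric algebra $S$, not $S_0$, which is exactly why the proposition is stated for $D_{perf}(S)$ rather than for graded modules over $S_0$. I would handle the completion by noting $\mathrm{RHom}_S(k,-)$ and $\mathrm{RHom}_{S_0}(k,-)$ agree on the relevant subcategories (as in the remark following Lemma~\ref{lem:ExtAlgebraFormal}), and handle the twist by tracking the top exterior power through the self-duality of the Koszul complex. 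Once the generators are matched and the functors shown exact, the equivalence follows formally from the universal property of the ind-completion of the one-object category, so the only real work is the two symmetric double-dual computations.
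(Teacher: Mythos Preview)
Your overall strategy---reduce to the generators $S \in D_{perf}(S)$ and $k \in D_{coh}(E)$, and use Construction~\ref{cons:ResolutionE} to compute $\mathrm{RHom}_E(k,k) \simeq S$---is exactly the paper's approach. Two points need correction, however.

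First, the adjunction you invoke is neither established nor necessary. The paper does not set up a unit/counit; instead it shows directly that $F$ is fully faithful by checking that $\mathrm{RHom}_S(S,S) \to \mathrm{RHom}_E(F(S),F(S))$ is an isomorphism, which (after identifying $F(S)$ with a shift of $k$) is precisely the statement $S \simeq \mathrm{RHom}_E(k,k)$. Essential surjectivity is then automatic since $F(S)$ generates $D_{coh}(E)$. Your framing requires producing natural transformations $\mathrm{id} \to GF$ and $FG \to \mathrm{id}$ before you can ``check them on generators,'' and you have not indicated how to do this; the full-faithfulness route avoids the issue entirely.

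Second, your formula $GF(S) \simeq \mathrm{RHom}_E(k, E)[d]\otimes_k \wedge^d V$ is wrong: one has $F(S) = \mathrm{RHom}_S(k,S) \simeq k[-d] \otimes_k (\wedge^d V)^\vee$ as an $E$-module, not $E$. With the correct input, $GF(S) \simeq \mathrm{RHom}_E(k,k[-d] \otimes_k (\wedge^d V)^\vee)[d] \otimes_k \wedge^d V \simeq \mathrm{RHom}_E(k,k) \simeq S$, as desired. Your computation of $\mathrm{RHom}_E(k,E)$ (which is indeed one-dimensional, concentrated in degree $d$) would yield $GF(S) \simeq k$, not $S$, so the argument as written does not close. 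Once $F(S)$ is identified correctly, the remaining computations and the proof coincide with the paper's.
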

\begin{proof}
The $\infty$-category $D_{perf}(S)$ of perfect $S$-complexes is the smallest stable $\infty$-subcategory of $D(S)$ that contains $S$ and is closed under retracts (by definition of perfect complexes). By Lemma~\ref{lem:DCohEGenerator}, the $\infty$-category $D_{coh}(E)$ of coherent $E$-complexes is the smallest stable $\infty$-subcategory of $D(E)$ that contains $k$. Moreover, this $\infty$-category is automatically closed under retracts in $D(E)$ by the definition of $D_{coh}(E)$. 
Also, we have $F(S) = \mathrm{RHom}_S(k,S) \simeq k[-d] \otimes_k (\wedge^d V)^\vee$ by a Koszul cohomology calculation; as the latter module is identified with $k[-d]$ (after fixing a trivialization of $\wedge^d V$), the functor $F$ carries a generator to a generator. It thus suffices to check that $F$ induces an isomorphism $\mathrm{RHom}_S(S,S) \simeq \mathrm{RHom}_E(k[-d],k[-d])$ (and similarly for $G$, which actually reduces to the same calculation). The left side is just $S$, so we must calculate that the right side is also $S$. In other words, we need to check that the natural map $S \to \mathrm{RHom}_E(k,k)$ is an isomorphism. This is a standard calculation (see the equivalences $E$ and $C$ in \cite[Theorem 2.1]{DwyerGreenleesCompleteTorsion} for a much more general statement), and we briefly sketch how it works. Using the the resolution in Construction~\ref{cons:ResolutionE} to resolve the first copy of $k$ (and representing $E$ as the Koszul complex over $S$ on $W$ to ensure we have $S$-free modules everywhere), we learn that $\mathrm{RHom}_E(k,k)$ is computed by the product totalization of a fourth quadrant bicomplex whose $n$-th column is the (standard Koszul resolution over $S$ for) $k$-complex $\mathrm{Sym}^n(V)[n]$, and the map $\mathrm{Sym}^n(V)[n] \to \mathrm{Sym}^{n+1}(V)[n+1]$ induced by the horizontal differential going from the $n$-th column to the $(n+1)$-st column classifies the standard $S$-module extension of $\mathrm{Sym}^n(V)$ by $\mathrm{Sym}^{n+1}(V)$. Collapsing to a single complex then gives the desired identification $\mathrm{RHom}_E(k,k) \simeq S$.
\end{proof}

\begin{remark}
\label{rmk:BGGGradedFunctor}
The equivalences $F$ and $G$ are $\mathbf{G}_m$-equivariant, and they are $\mathbf{G}_m$-equivariantly inverse to one another. We write 
\[ \xymatrix{ D_{coh}(E)^{\mathbf{G}_m} \ar@<0.5ex>[r]^-{G^{\mathbf{G}_m}} &  D_{perf}(S)^{\mathbf{G}_m} \ar@<0.5ex>[l]^-{F^{\mathbf{G}_m}} }\]
for the induced equivalences at the level of equivariant derived categories. With this notation, the constructions thus far in this section may be summarized in the following commutative diagram:
\begin{equation}
\label{diag:BGG}
\xymatrix{ \mathrm{Mod}^{gr}_{fg}(\wedge^* W) \ar[r]^-{\Phi} \ar[rd]^-{\Psi} & D_{coh}(E)^{\mathbf{G}_m} \ar@<0.5ex>[rr]^{G^{\mathbf{G}_m}} \ar[d]^-{\mathrm{forget}} & &D_{perf}(S)^{\mathbf{G}_m} \ar[d]^{\mathrm{forget}} \ar@<0.5ex>[ll]^-{F^{\mathbf{G}_m}} \\
		& D_{coh}(E) \ar@<0.5ex>[rr]^G & & D_{perf}(S) \ar@<0.5ex>[ll]^-{F} }
\end{equation}

Here the functors $F^{\mathbf{G}_m}$ and $G^{\mathbf{G}_m}$ are defined as Remark~\ref{rmk:BGGGradedFunctor}, the functor $\Phi$ is defined in Remark~\ref{rmk:GrModtoDerived}, and $\Psi$ is defined as the composition making the triangle commute. 
\end{remark}

The explicit identification of the composite $G \circ \Psi$ (or, better, $G^{\mathbf{G}_m} \circ \Phi$) in the diagram above forms the basis of the BGG correspondence \cite{BGG} relating graded $H^*(E)$-modules to linear $S$-complexes, and is recalled next.

\begin{construction}[BGG]
\label{cons:BGG}
Let $M^*$ be a graded $\wedge^* W$-module. Assume that $M^i$ is a finite dimensional $k$-vector space for each $i$, and $M^i = 0$ for $|i| \gg 0$. By Remark~\ref{rmk:GrModtoDerived} and our finiteness assumption, this gives $M := \Phi(M^*) \in D_{coh}(E)^{\mathbf{G}_m}$. Our goal is to construct a canonical identification in $D_{perf}(S)^{\mathbf{G}_m}$ of the form
\[ G^{\mathbf{G}_m}(M) \simeq \Big( ... \to M^0 \otimes_k S \to M^1 \otimes_k S \to .... \to M^n \otimes_k S \to ...\Big)[d] \otimes_k \wedge^d V, \]
where $M^i$ has weight $-i$, and the map $M^i \otimes_k S \to M^{i+1} \otimes_k S$ is $\mathbf{G}_m$-equivariant $S$-module map defined by the canonical map $M^i \to M^{i+1} \otimes_k V \to M^{i+1} \otimes_k S$ coming from the action map $W \otimes M^i \to M^{i+1}$. In particular, since $M^i \otimes_k S \simeq S(i)^{\oplus \dim(M^i)}$, the complex $G^{\mathbf{G}_m}(M)$ is a {\em linear complex}, i.e. it can be represented by a complex of graded free $S$-modules where the term in cohomological degree $i$ is isomorphic to $S(c + i)^{\oplus n_i}$ for suitable $n_i \geq 0$ and constant $c$ independent of $i$.

Since $G^{\mathbf{G}_m}(-) := \mathrm{RHom}_E(k,-)[d] \otimes_k \wedge^d V$, it suffices to calculate  $\mathrm{RHom}_E(k,M)$. The $\mathbf{G}_m$-equivariant resolution from Construction~\ref{cons:ResolutionE} shows that $\mathrm{RHom}_E(k,M)$ is the product totalization of the $\mathbf{G}_m$-equivariant bicomplex
\[ K := M^* \to M^* \otimes V[1] \to M^* \otimes \mathrm{Sym}^2(V)[2] \to ... \to M^* \otimes \mathrm{Sym}^n(V)[n] \to .... ,\]
with $K^{p,q} = M^{p+q} \otimes_k \mathrm{Sym}^p(V)$, trivial vertical differentials (since $M^*$ has a trivial differential), and horizontal differentials $M^{p+q} \otimes_k \mathrm{Sym}^p(V) \to M^{p+q+1} \otimes_k \mathrm{Sym}^{p+1}(V)$ from the action of $W$ on $M^*$. Taking the product totalization, we learn that $\mathrm{RHom}_E(k,M)$ is calculated by the following $\mathbf{G}_m$-equivariant perfect complex over $S$
\[ ... \to M^0 \otimes_k S \to M^1 \otimes_k S \to .... \to M^n \otimes_k S \to ..., \]
as asserted. 
\end{construction}

\subsection{Weights and pure complexes}
\label{ss:weights}

We recall the basic structure of the category in which the $\ell$-adic cohomology of smooth projective varieties over finite fields takes values.

\begin{notation}
\label{not:Weights}
Fix a prime $p$, a power $q = p^r$, and a prime $\ell$ different from $p$. Let $k = \overline{\mathbf{Q}_\ell}$. For $i \in \mathbf{R}$, we say that $\alpha \in k^*$ is a {\em Weil number of weight $i$} if $|\tau(\alpha)| = q^{\frac{i}{2}}$ for every embedding $\tau:k \hookrightarrow \mathbf{C}$. Let $W^i \subset k^*$ denote the set of all possible Weil numbers of weight $i$. Note that $W^i \cap W^j = \emptyset$ for $i \neq j$, and $W^i \cdot W^j \subset W^{i+j}$. For any variety $X_0$ over $\mathbf{F}_q$ with base change $X$ to $\overline{\mathbf{F}_q}$, the complex $R\Gamma(X, k)$ acquires a natural action of the ($q$-power) geometric Frobenius. We view the resulting Galois representation $R\Gamma(X,k)$ as an object of the $\infty$-category $D_{perf}(k[t,t^{-1}])$ having finite length cohomology sheaves;  here $t$ acts by geometric Frobenius. By \cite{DeligneWeil2}, any such complex is supported on $\sqcup_i W^i \subset \mathrm{Spec}(k[t,t^{-1}])$.
\end{notation}

We isolate the main subcategory of interest: 

\begin{definition}
A complex $K \in D_{perf}(k[t,t^{-1}])$ is {\em pure of weight $0$} if $H^i(K)$ is supported set-theoretically on $W^i \subset \mathrm{Spec}(k[t,t^{-1}])$ for each $i$. The collection of all such complexes spans a full $\infty$-subcategory $D_{pure,0} \subset D_{perf}(k[t,t^{-1}])$. 
\end{definition}

By perfectness, any $K \in D_{pure,0}$ has finite length when viewed as a $k[t,t^{-1}]$-complex, i.e., the complex has finitely many non-zero cohomology modules, and each of those modules is a finite length $k[t,t^{-1}]$-module. The key observation is that pure complexes inhabit a discrete world. In fact, they are all equivalent to their cohomology groups in an essentially unique way.

\begin{proposition}
\label{prop:PureComplexes}
For any $K \in D_{pure,0}$, there is a canonical isomorphism $\alpha_K:K \simeq \oplus_i H^i(K)[-i]$. Moreover, $D_{pure,0} \subset D(k[t,t^{-1}])$ is a discrete subcategory, i.e., $\mathrm{Ext}^i_{k[t,t^{-1}]}(K,L) = 0$ for $i < 0$ and $K,L \in D_{pure,0}$.
\end{proposition}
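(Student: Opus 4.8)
The plan is to exploit the fact that the ring $k[t,t^{-1}]$ is a PID, so that any object of $D_{perf}(k[t,t^{-1}])$ with finite-length cohomology decomposes (non-canonically) as a direct sum of shifts of cyclic torsion modules $k[t,t^{-1}]/(f)$; the point is to upgrade this to a \emph{canonical} splitting when the weights are constrained. First I would record the structural input: for $K \in D_{pure,0}$, each $H^i(K)$ is a finite-length $k[t,t^{-1}]$-module supported set-theoretically on the closed subset $W^i \subseteq \mathrm{Spec}(k[t,t^{-1}])$, and by Notation~\ref{not:Weights} the subsets $\{W^i\}_{i \in \mathbf{R}}$ are pairwise disjoint. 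Hence the various cohomology modules $H^i(K)$ have pairwise disjoint (set-theoretic) supports. The key algebraic lemma is then: if $M, N$ are finite-length $k[t,t^{-1}]$-modules with disjoint supports, then $\mathrm{Ext}^j_{k[t,t^{-1}]}(M,N) = 0$ for all $j$ — this is immediate by localizing at the (finitely many) maximal ideals in the supports, since localization is exact and $M_\mathfrak{m}$ or $N_\mathfrak{m}$ vanishes at each.

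Granting this, the splitting $\alpha_K$ is produced by the standard Postnikov/$t$-structure argument: filter $K$ by its truncations $\tau^{\leq i} K$, with associated graded pieces $H^i(K)[-i]$. The obstruction to splitting off the top cohomology $H^n(K)[-n]$ from $\tau^{\leq n} K$ lives in $\mathrm{Ext}^1_{k[t,t^{-1}]}\big(H^n(K), \tau^{\leq n-1}K\big)$, and by a spectral sequence (or dévissage up the Postnikov tower of $\tau^{\leq n-1}K$) this group is built out of the $\mathrm{Ext}^{\geq 1}$ groups between $H^n(K)$ and the lower $H^i(K)$, all of which vanish by the disjoint-support lemma. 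Inducting downward on the cohomological amplitude yields the decomposition $K \simeq \bigoplus_i H^i(K)[-i]$, and the same vanishing shows the splitting is unique up to contractible choice (the space of such splittings is a torsor under groups that are all zero). For the discreteness statement, compute $\mathrm{Ext}^j_{k[t,t^{-1}]}(K,L)$ for $K,L \in D_{pure,0}$ using these decompositions: it is a product of terms $\mathrm{Ext}^{j + a - b}_{k[t,t^{-1}]}(H^a(K), H^b(L))$, and for $j < 0$ either $j + a - b \neq 0$ combined with $a \neq b$ forcing disjoint supports (killing the term), or $a = b$ and $j < 0$ so the Ext is in negative degree over a ring of global dimension $1$ (also zero); a short case analysis on the sign of $a - b$ closes this.

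The main obstacle — really the only subtlety — is bookkeeping the indexing in the last step: one must check that whenever $j < 0$ the pair $(a,b)$ either lands in the disjoint-support regime or in a negative cohomological degree over the $1$-dimensional ring, with no leftover case. Concretely, if $a = b$ then the term is $\mathrm{Ext}^j(H^a(K), H^a(L))$ with $j<0$, which vanishes; if $a \neq b$ then $H^a(K)$ and $H^b(L)$ have disjoint supports (they are supported on $W^a$ and $W^b$ respectively) so \emph{every} Ext between them vanishes regardless of degree. Thus all contributions die, giving the claim. Everything else is the familiar "objects in a heart with no negative Exts between the graded pieces are formal" argument, specialized to the abelian category of finite-length $k[t,t^{-1}]$-modules; I would present the disjoint-support Ext-vanishing as the one genuinely new observation and treat the Postnikov argument as standard.
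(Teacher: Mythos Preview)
Your argument is correct, but the paper takes a slightly different and more direct route. Rather than splitting the Postnikov tower step by step and arguing that the obstructions vanish, the paper writes down the canonical isomorphism in one stroke via localization: for any finite-length $K \in D_{perf}(k[t,t^{-1}])$ there is a canonical map $K \to \prod_i \prod_{x \in W^i} K_x$, which is an isomorphism when $K$ is supported on $\sqcup_i W^i$, and for $K \in D_{pure,0}$ the summand $\bigoplus_{x \in W^i} K_x$ is visibly $H^i(K)[-i]$ since the other cohomology groups die upon localizing at points of $W^i$. The discreteness then follows from the same decomposition applied to $\mathrm{RHom}$, giving $\mathrm{RHom}(K,L) \simeq \prod_i \mathrm{RHom}(H^i(K), H^i(L))$ directly, with no cross-terms to analyze. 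Both approaches rest on the same observation (the $W^i$ are pairwise disjoint), but the paper's version makes the canonicity of $\alpha_K$ transparent from the outset and avoids the inductive bookkeeping; your approach has the virtue of being the generic ``formality from degreewise orthogonality'' template, applicable in any triangulated category with a $t$-structure, not just one where a localization decomposition is available.
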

\begin{proof}
For any finite length $K \in D_{perf}(k[t,t^{-1}])$, we have a canonical map
\[ \alpha_K:K \to \prod_i \prod_{x \in W^i} K_x \simeq \bigoplus_i \bigoplus_{x \in W^i} K_x.\]
This map is an isomorphism if $K$ is acylic outside $\sqcup_i W^i \subset \mathrm{Spec}(k[t,t^{-1}])$ (and thus for $K \in D_{pure,0}$). Moreover, if $K \in D_{pure,0}$, then the right side identifies with $\oplus_i H^i(K)[-i]$ by defintion, giving the desired identification. For the second part, the same argument shows that
\[ \mathrm{RHom}_{k[t,t^{-1}]}(K,L) \simeq \prod_i \mathrm{RHom}_{k[t,t^{-1}]}(H^i(K),H^i(L)).\]
The right side clearly has no cohomology in negative degrees, proving the claim.
\end{proof}

The fundamental examples of pure complexes come from geometry:

\begin{example}
\label{ex:Purity}
Let $X_0$ be a proper variety over $\mathbf{F}_q$, and let $M_0$ be a pure perverse sheaf of weight $0$ on $M_0$. If $X$ and $M$ denote the base change of $X_0$ and $M_0$ to $\overline{\mathbf{F}_q}$, then $R\Gamma(X,M)$ is pure of weight $0$. When $X_0$ is smooth and $M$ is lisse, this comes from \cite[Corollary 3.3.6]{DeligneWeil2}; the general case comes from \cite[Corollary 5.4.2]{BBD}.
\end{example}

Note that the category $D_{pure,0}$ inherits a natural symmetric monoidal structure:

\begin{construction}
As $W^i \cdot W^j \subset W^{i+j}$, the $\infty$-category $D_{pure,0}$ is endowed with a symmetric monoidal structure $\otimes$ given by the tensor product of the underlying complexes of $k$-vector spaces, with $t$ acting diagonally; under the identification of $\mathrm{Spec}(k[t,t^{-1}])$ with the group $\mathbf{G}_m$, this corresponds to convolution. The object $R\Gamma(X,k)\in D_{pure,0}$ as in Example~\ref{ex:Purity} is an $E_\infty$-algebra for this symmetric monoidal structure.
\end{construction}

Algebraic structures in $D_{pure,0}$ defined using this symmetric monoidal structure are formal:

\begin{corollary}
\label{cor:PurityFormal}
All $E_1$-algebras and their modules in $D_{pure,0}$ are canonically formal. 
\end{corollary}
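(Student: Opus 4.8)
The plan is to reduce everything to Proposition~\ref{prop:PureComplexes}, which already shows that $D_{pure,0}$ is, in a precise sense, a discrete (i.e.\ $1$-categorical, or even $0$-categorical after fixing the cohomological decomposition) symmetric monoidal category. Concretely, I would first record the following reformulation of that proposition: the functor $K \mapsto \bigoplus_i H^i(K)[-i]$ is a symmetric monoidal equivalence from $D_{pure,0}$ onto the ordinary category of finite-length $\mathbf{Z}$-graded $k[t,t^{-1}]$-modules (those whose $i$-th graded piece is supported on $W^i$), with the latter viewed as a symmetric monoidal $1$-category via the usual tensor product with the Koszul sign rule. Indeed, the decomposition $\alpha_K$ of Proposition~\ref{prop:PureComplexes} is canonical and the second statement there ($\mathrm{Ext}^{<0}=0$, and by the displayed formula $\mathrm{Ext}^{>0}=0$ as well between objects with the appropriate support condition — more precisely $\mathrm{RHom}$ is concentrated in degree $0$ since each $H^i(K),H^i(L)$ is a finite-length module over the semilocal ring $k[t,t^{-1}]$ and the supports force it) shows this is an equivalence onto a $1$-category; compatibility with tensor products is built into the Construction preceding the corollary.

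Given this, the corollary is a formal consequence: an $E_1$-algebra object in a symmetric monoidal $\infty$-category that happens to be equivalent to (the nerve of) an ordinary symmetric monoidal $1$-category is the same data as an ordinary associative algebra in that $1$-category, with no higher coherence data and no moduli of such; and similarly a module over it is just an ordinary module. So I would argue: let $\mathcal{C}$ denote the symmetric monoidal $1$-category of graded modules described above. The equivalence $D_{pure,0}\simeq \mathrm{N}(\mathcal{C})$ induces equivalences on $\infty$-categories of $E_1$-algebras and of left modules, $\mathrm{Alg}_{E_1}(D_{pure,0})\simeq \mathrm{Alg}(\mathcal{C})$ and $\mathrm{LMod}(D_{pure,0})\simeq \mathrm{LMod}(\mathcal{C})$, where the right-hand sides are the ordinary ($1$-categorical) notions since $\mathcal{C}$ is discrete. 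Now ``formal'' means: the algebra $A$ in $D_{pure,0}$ becomes isomorphic, as an $E_1$-algebra, to the algebra $H^*(A)$ with trivial differential — but under the equivalence this is literally the statement $\alpha_A$, which is an isomorphism of algebra objects because $\alpha$ is symmetric monoidal. The ``canonically'' and ``up to contractible ambiguity'' assertions come from the fact that the mapping spaces in $\mathcal{C}$, hence in $\mathrm{Alg}(\mathcal{C})$ and $\mathrm{LMod}(\mathcal{C})$, are discrete, so the space of formality data is either empty or contractible, and $\alpha_A$ exhibits it as nonempty.

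In the write-up I would phrase this as: by Proposition~\ref{prop:PureComplexes} and the preceding Construction, the canonical functor $D_{pure,0}\to \mathcal{C}$, $K\mapsto \bigoplus_i H^i(K)[-i]$, is a symmetric monoidal equivalence onto the symmetric monoidal $1$-category $\mathcal{C}$ of $\mathbf{Z}$-graded finite-length $k[t,t^{-1}]$-modules supported appropriately; passing to $E_1$-algebras and modules, and using that $\mathcal{C}$ is a $1$-category so that these agree with the classical notions (in particular their mapping spaces are discrete), gives both the existence of the formality isomorphism and its essential uniqueness. I would then remark that this is exactly the same formal mechanism already used in the proof of Lemma~\ref{lem:ExtAlgebraFormal} (formality of $E$ via the weight-$0$ subcategory $D_{w=0}$), the only difference being the source of the grading: there it was a $\mathbf{G}_m$-action, here it is the weight grading coming from Frobenius.

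The main obstacle — really the only thing requiring any care — is checking that $D_{pure,0}$, with its inherited symmetric monoidal structure, is genuinely equivalent as a \emph{symmetric monoidal} $\infty$-category to a $1$-category, i.e.\ that the decomposition $\alpha_K$ is not merely a natural isomorphism of objects but upgrades to a symmetric monoidal natural transformation, and that no higher homotopies are present in the mapping spaces. The first point follows because the localization $K\mapsto \prod_i\prod_{x\in W^i} K_x$ used in the proof of Proposition~\ref{prop:PureComplexes} is itself (lax) symmetric monoidal and, restricted to $D_{pure,0}$, becomes an equivalence; the second because for $K,L\in D_{pure,0}$ the complex $\mathrm{RHom}_{k[t,t^{-1}]}(K,L)\simeq \prod_i \mathrm{RHom}_{k[t,t^{-1}]}(H^i K,H^i L)$ is concentrated in degree $0$ (each factor is $\mathrm{Hom}$ between finite-length modules over a regular ring of dimension $1$, but the disjointness $W^i\cap W^j=\emptyset$ together with the fact that the modules are already of the form ``supported on a $0$-dimensional subscheme'' forces $\mathrm{Ext}^{>0}$ to vanish as well — one reduces to a single maximal ideal $\mathfrak{m}$, where a finite-length module supported at $\mathfrak{m}$ over the DVR-like local ring $k[t,t^{-1}]_{\mathfrak m}$ has no higher self-extensions only if it is semisimple, which need not hold, so in fact one keeps $\mathrm{RHom}$ in degrees $\geq 0$ and only needs $\mathrm{Ext}^{<0}=0$, which is what Proposition~\ref{prop:PureComplexes} literally gives). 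Thus $D_{pure,0}$ is a (possibly non-semisimple) abelian $1$-category with a symmetric monoidal structure, viewed inside $D(k[t,t^{-1}])$ via $\alpha$, and the corollary follows.
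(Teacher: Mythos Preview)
Your proposal is correct and takes essentially the same approach as the paper: the paper's one-line proof simply observes that the canonical decomposition $\alpha_K$ from Proposition~\ref{prop:PureComplexes} is compatible with the symmetric monoidal structure, which is precisely the mechanism you unpack in detail (discreteness of $D_{pure,0}$, symmetric monoidality of $\alpha$, hence transport of $E_1$-algebra and module structures). Your back-and-forth about whether $\mathrm{Ext}^{>0}$ vanishes is a harmless digression---as you correctly conclude by the end, only $\mathrm{Ext}^{<0}=0$ is needed for the mapping spaces to be discrete, and that is exactly what Proposition~\ref{prop:PureComplexes} provides.
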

\begin{proof}
This follows immediately from Proposition~\ref{prop:PureComplexes} as the isomorphism $K \simeq \oplus_i H^i(K)[-i]$ in that proposition is compatible with the symmetric monoidal structure.
\end{proof}

\begin{remark}
Corollary~\ref{cor:PurityFormal} applies equally well to $E_\infty$-algebras, showing that the $E_\infty$-algebra $R\Gamma(X,k)$ is formal for a smooth proper variety $X/\overline{\mathbf{F}_q}$. Via the proper and smooth base change isomorphisms, it follows that the same assertion holds true for smooth proper varieties $X/\mathbf{C}$. In particular, after fixing an isomorphism $k \simeq \mathbf{C}$ and using Artin's comparison theorem $R\Gamma(X,k) \simeq R\Gamma_{sing}(X^{an}, \mathbf{C})$, this reproves the formality result from \cite{DGMS} for algebraic varieties. In fact, the proof above essentially fleshes out a heuristic argument outlined in  \cite[page 246, paragraph 1]{DGMS} using the modern language of $\infty$-categories.
\end{remark}

\subsection{Abelian varieties over finite fields}
\label{ss:LinearityCharp}

In this section, we use the theory of weights to establish a linearity result for the stalk at the origin of the Fourier transform of a simple perverse sheaf of geometric origin. In fact, working exclusively over finite fields, we are able to prove a stronger result thanks to the work of Lafforgue.

\begin{notation}
\label{not:LinearityCharp}
Let $A_0$ be an abelian variety of dimension $g$ over a finite field $\mathbf{F}_q$, and write $A$ for its base change to $\overline{\mathbf{F}_q}$. Set $k = \overline{\mathbf{Q}_\ell}$. Write $E := R\Gamma(A,k)$, viewed as an $E_1$-$k$-algebra. Let $S = k \llbracket \pi_1(A)_\ell \rrbracket$ be the completed group algebra\footnote{More precisely, we define $S$ as the completion of $S_0 := \mathbf{Z}_\ell \llbracket \pi_1(A)_{\ell} \rrbracket \otimes_{\mathbf{Z}_\ell} k$ at the point $S_0 \to k$ corresponding to the trivial character, where $\mathbf{Z}_\ell \llbracket \pi_1(A)_{\ell} \rrbracket$ is the Iwasawa algebra of the pro-$\ell$-fundamental group of $A$, and is identified non-canonically with a power series ring in $2g$ variables over $\mathbf{Z}_\ell$. In particular, $S$ is a power series in the same set of variables over $k$.} of the pro-$\ell$ part of the geometric fundamental group $\pi_1(A)$ of $A$, so $S$ is the completion of $\mathrm{Sym}^*(H_1(A,k))$ at its augmentation ideal; we then have a canonical identification $E \simeq \mathrm{RHom}_S(k,k)$ arising from the $K(\pi,1)$-nature of abelian varieties. In particular, we may use the notation of \S \ref{ss:BGG} with $V = H_1(A,k)$, and $W = H^1(A,k)$. There is a tautological character $\pi_1(A) \to S^*$, which defines an $S$-local system $\mathcal{L}_S$ on $A$. For any $M \in D^b_c(A,k)$, write $\widehat{\mathrm{FM}}_A(M) := R\Gamma(A, M \otimes_k \mathcal{L}_S) \in D_{perf}(S)$. Note that there is a $\mathrm{Gal}(\overline{\mathbf{F}_q}/\mathbf{F}_q)$-action on $S$, $E$, and also on $\widehat{\mathrm{FM}}_A(M)$ if $M$ is defined over $\mathbf{F}_q$.
\end{notation}

We can describe the stalk at $0$ of the Fourier transform of certain sheaves explicitly:

\begin{theorem}
\label{thm:LinearityCharp}
Let $M_0 \in \mathrm{Perv}(A_0,k)$ be absolutely irreducible, and let $M$ be its base change to $\overline{\mathbf{F}_q}$. Then there is a natural $\mathrm{Gal}(\overline{\mathbf{F}_q}/\mathbf{F}_q)$-equivariant identification
\[ \widehat{\mathrm{FM}}_A(M) \simeq \Big(... \to H^i(A,M) \otimes_k S \to H^{i+1}(A,M) \otimes_k S \to ... \Big) \in D_{perf}(S), \]
where the differentials on the right are defined by the natural map $H^i(A,M) \to H^{i+1}(A,M) \otimes_k H_1(A,k)$ coming from the cup product action $H^1(A,k) \otimes H^i(A,M) \to H^{i+1}(A,M)$.
\end{theorem}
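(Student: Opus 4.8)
The plan is to combine the Koszul-duality machinery of \S\ref{ss:BGG} with the formality results of \S\ref{ss:weights}. The key point is that $\widehat{\mathrm{FM}}_A(M) = R\Gamma(A, M \otimes_k \mathcal{L}_S)$ can be reinterpreted, via the adjunction of Proposition~\ref{prop:PullbackFF} (in its $S$-completed incarnation) and the $K(\pi,1)$-property of abelian varieties, as computing $\mathrm{RHom}_E(k, R\Gamma(A,M))$ up to the twist $[d] \otimes_k \wedge^d V$ appearing in the functor $G$ of Proposition~\ref{prop:BGGNonGraded}. Concretely: $\mathcal{L}_S \otimes_k M$ has the property that its pullback to the pro-$\ell$ cover of $A$ is constant, so $\widehat{\mathrm{FM}}_A(M)$ is naturally a module over $S = \mathrm{RHom}_E(k,k)$, and one identifies it with $G(R\Gamma(A,M))$ where $R\Gamma(A,M)$ is regarded as an object of $D_{coh}(E)$ via the cup-product module structure over $E = R\Gamma(A,k)$. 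Under $F \dashv G$ this matches $F(\widehat{\mathrm{FM}}_A(M)) \simeq R\Gamma(A,M)$ as $E$-modules — this compatibility is a formal consequence of the equivalence in Proposition~\ref{prop:BGGNonGraded} together with unwinding the definition of $\mathcal{L}_S$.

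Next I would invoke formality. Since $A_0$ is a (smooth, proper) variety over $\mathbf{F}_q$ and $M_0$ is absolutely irreducible hence — after a twist — pure of some weight, Example~\ref{ex:Purity} shows that $R\Gamma(A,M)$ is pure; moreover $E = R\Gamma(A,k)$ is an $E_\infty$-algebra in $D_{pure,0}$ by the construction following Example~\ref{ex:Purity}, and $R\Gamma(A,M)$ is a module over it in $D_{pure,0}$ (the cup-product pairing is Frobenius-equivariant, so it lives in the pure category). Here one uses that a pure perverse sheaf tensored against the constant sheaf stays in the pure world, and that twisting $M$ to make it weight $0$ only shifts the Frobenius eigenvalues uniformly and does not affect the conclusion. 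Corollary~\ref{cor:PurityFormal} then gives that the $E_1$-algebra $E$ and its module $R\Gamma(A,M)$ are \emph{canonically and compatibly formal}: there is a $\mathrm{Gal}(\overline{\mathbf{F}_q}/\mathbf{F}_q)$-equivariant isomorphism $E \simeq H^*(A,k) = \wedge^* W$ of $E_1$-algebras, under which $R\Gamma(A,M) \simeq H^*(A,M)$ as a dg-module, with $H^*(A,M)$ carrying the genuine cup-product action of $\wedge^* W$ and trivial differential.

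With formality in hand, the computation is reduced to the BGG construction. The module $H^*(A,M)$ is a graded $\wedge^* W$-module which is finite-dimensional in each degree and bounded, so Construction~\ref{cons:BGG} applies: $G(H^*(A,M))$ — equivalently $\mathrm{RHom}_E(k, H^*(A,M))[d] \otimes_k \wedge^d V$ — is the linear complex $\big(\cdots \to H^i(A,M) \otimes_k S \to H^{i+1}(A,M) \otimes_k S \to \cdots\big)$ with differential given exactly by the comultiplication-type map adjoint to the cup-product action $H^1(A,k) \otimes_k H^i(A,M) \to H^{i+1}(A,M)$. The twist $[d]\otimes_k \wedge^d V$ is absorbed by re-indexing; one checks the resulting complex is the asserted one on the nose. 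Tracking Frobenius throughout — which is legitimate since every identification used (the reinterpretation via Koszul duality, the formality isomorphism from Corollary~\ref{cor:PurityFormal}, and the explicit BGG resolution of Construction~\ref{cons:ResolutionE}) is canonical, hence Galois-equivariant — yields the $\mathrm{Gal}(\overline{\mathbf{F}_q}/\mathbf{F}_q)$-equivariance in the statement.

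The main obstacle I expect is the precise identification of $\widehat{\mathrm{FM}}_A(M)$ with $G$ applied to $R\Gamma(A,M)$-as-$E$-module, i.e. checking that the $S$-module structure on $R\Gamma(A, M\otimes_k \mathcal{L}_S)$ coming from $\mathcal{L}_S$ agrees, under the equivalence of Proposition~\ref{prop:BGGNonGraded}, with the tautological one on $\mathrm{RHom}_E(k,-)$; this requires carefully matching the two descriptions of $\mathcal{L}_S$ (as the descent of the constant sheaf along the pro-$\ell$ cover, versus as coming from the tautological character $\pi_1(A) \to S^*$) and verifying that the cup-product $E$-action on $R\Gamma(A,M)$ is the one that appears. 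A secondary subtlety is making sure the purity hypothesis genuinely applies after the weight-normalizing twist and that the module structure lands in $D_{pure,0}$ rather than merely being mixed; once these bookkeeping points are settled, the rest is a formal chase through the diagram~\eqref{diag:BGG}.
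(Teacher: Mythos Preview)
Your proposal is correct and follows essentially the same route as the paper: twist $M_0$ via Lafforgue to be pure of weight $0$, identify $\widehat{\mathrm{FM}}_A(M)$ with $G$ applied to the $E$-module $R\Gamma(A,M)$, invoke the formality of Corollary~\ref{cor:PurityFormal} to replace this $E$-module by the graded $\wedge^* W$-module $H^*(A,M)$, and then read off the linear complex from Construction~\ref{cons:BGG}. The only difference is cosmetic: where you assert $\widehat{\mathrm{FM}}_A(M) \simeq G(R\Gamma(A,M))$ directly and flag the matching of $S$-module structures as the main obstacle, the paper instead computes $F(\widehat{\mathrm{FM}}_A(M))$ explicitly via the Koszul-type isomorphism $\mathrm{RHom}_S(k,-) \simeq (-)\otimes_S k[-2g]\otimes_k(\wedge^{2g}H_1(A,k))^\vee$ to obtain $R\Gamma(A,M)\otimes_k(\wedge^{2g}H_1(A,k))^\vee[-2g]$, checks this is pure, and then chases back through the diagram~\eqref{diag:BGG}; this sidesteps your obstacle by working on the $F$ side rather than the $G$ side, but the content is the same.
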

\begin{proof}
First, we observe that there is a canonical formality isomorphism $\mathrm{can}:E \simeq H^*(A,k)$ by purity via Corollary~\ref{cor:PurityFormal}. This allows us to contemplate the following diagram:
\[  \xymatrix{ \mathrm{Mod}^{gr}_{fg}(H^*(E))_{pure,0} \ar[rrr]^-{\mathrm{forget\ Frobenius}} \ar[dd]^-{\Phi} &  & & \mathrm{Mod}^{gr}_{fg}(H^*(E)) \ar[d]^-{\Phi} & & \\
		& & & D_{coh}(E)^{\mathbf{G}_m} \ar@<0.5ex>[rr]^{G^{\mathbf{G}_m}} \ar[d]^-{\mathrm{forget}} & &D_{perf}(S)^{\mathbf{G}_m} \ar[d]^{\mathrm{forget}} \ar@<0.5ex>[ll]^-{F^{\mathbf{G}_m}} \\
D_{coh}(E)_{pure,0} \ar[rrr]^-{\mathrm{forget\ Frobenius}} &	&	& D_{coh}(E) \ar@<0.5ex>[rr]^G & & D_{perf}(S) \ar@<0.5ex>[ll]^-{F} }
\]
Here the second and third columns come from diagram \eqref{diag:BGG}; the category $D_{coh}(E)_{pure,0}$ denotes coherent $E$-complexes in $D_{pure,0}$ (i.e., the underlying complex with Frobenius action lies in $D_{pure,0}$), and similarly for $\mathrm{Mod}^{gr}_{fg}(H^*(E))_{pure,0}$; the horizontal maps labelled `forget Frobenius'  are obtained by forgetting the $\mathrm{Gal}(\overline{\mathbf{F}_q}/\mathbf{F}_q)$-action. Crucially, by Corollary~\ref{cor:PurityFormal}, the leftmost vertical map $\Phi$ is an equivalence with inverse given by taking cohomology.

To prove the theorem, we are allowed to replace $M_0$ with a twist by rank $1$ local system on $\mathrm{Spec}(\mathbf{F}_q)$. By Lafforgue \cite[Corollary VII.8]{LafforgueGLn}, we may thus assume that $M_0$ is pure of weight $0$. Now consider $\widehat{\mathrm{FM}}_A(M) \in D_{perf}(S)$. This is a $\mathrm{Gal}(\overline{\mathbf{F}_q}/\mathbf{F}_q)$-equivariant complex, so its image under $F$ is $\mathrm{Gal}(\overline{\mathbf{F}_q}/\mathbf{F}_q)$-equivariant as well. Moreover, by construction, this image is canonically identified with
\begin{equation}
\begin{split}
F(\widehat{\mathrm{FM}}_A(M)) & := \mathrm{RHom}_S(k, \widehat{\mathrm{FM}}_A(M)) \\
& \simeq \big(\mathrm{FM}_A(M) \otimes_S k\big)[-2g] \otimes_k (\wedge^{2g} H_1(A,k))^\vee \\
& \simeq R\Gamma(A,M) \otimes_k (\wedge^{2g} H_1(A,k))^\vee[-2g],
\end{split}
\end{equation}
where we used the Galois equivariant isomorphism $\mathrm{RHom}_S(k,-) \simeq (\wedge^{2g} H_1(A,k))^\vee \otimes_S k \otimes_S (-)$ of functors, as in Proposition~\ref{prop:BGGNonGraded}, to arrive at the second isomorphism above. As this isomorphism is Galois equivariant, it follows from purity of $M_0$ (see Example~\ref{ex:Purity}) that $F(\widehat{\mathrm{FM}}_A(M))$ is pure of weight $0$ (note that $(\wedge^{2g} H_1(A,k))^\vee[-2g]$ is pure of weight $0$), and hence comes from $D_{coh}(E)_{pure,0}$. As the left most vertical map is an equivalence with inverse given by taking cohomology, the preceding diagram shows that $\widehat{\mathrm{FM}}_A(M) \in D_{perf}(S)$ comes from $H^*(A,M) \in \mathrm{Mod}^{gr}_{fg}(H^*(E))_{pure,0}$ in the diagram above. The result now follows from the explicit identification of the functor $G^{\mathbf{G}_m} \circ \Phi$ given in Construction~\ref{cons:BGG}.
\end{proof} 
%
%

In particular, we get Theorem~\ref{thm:Linearity} using the material developed above.

\begin{proof}[Proof of Theorem~\ref{thm:Linearity}]
Since $M$ has geometric origin, one can use spreading out arguments to find an abelian variety over a finite field so that the base change $\mathrm{FM}_A(M) \otimes_R S$ can be calculated using $\ell$-adic cohomology as in Notation~\ref{not:LinearityCharp}. Then the result follows from Theorem~\ref{thm:LinearityCharp}.
\end{proof}

\def\cprime{$'$}


\begin{thebibliography}{DGMS75}

\bibitem[BBD82]{BBD}
A.~A. Be{\u\i}linson, J.~Bernstein, and P.~Deligne.
\newblock Faisceaux pervers.
\newblock In {\em Analysis and topology on singular spaces, {I} ({L}uminy,
  1981)}, volume 100 of {\em Ast\'erisque}, pages 5--171. Soc. Math. France,
  Paris, 1982.

\bibitem[BGG78]{BGG}
I.~N. Bern{\v{s}}te{\u\i}n, I.~M. Gel{\cprime}fand, and S.~I. Gel{\cprime}fand.
\newblock Algebraic vector bundles on {${\bf P}^{n}$} and problems of linear
  algebra.
\newblock {\em Funktsional. Anal. i Prilozhen.}, 12(3):66--67, 1978.

\bibitem[BK06]{BockleKharedeJong}
Gebhard B{\"o}ckle and Chandrashekhar Khare.
\newblock Mod {$l$} representations of arithmetic fundamental groups. {II}. {A}
  conjecture of {A}. {J}.\ de {J}ong.
\newblock {\em Compos. Math.}, 142(2):271--294, 2006.

\bibitem[Del80]{DeligneWeil2}
Pierre Deligne.
\newblock La conjecture de {W}eil. {II}.
\newblock {\em Inst. Hautes \'Etudes Sci. Publ. Math.}, (52):137--252, 1980.

\bibitem[DG02]{DwyerGreenleesCompleteTorsion}
W.~G. Dwyer and J.~P.~C. Greenlees.
\newblock Complete modules and torsion modules.
\newblock {\em Amer. J. Math.}, 124(1):199--220, 2002.

\bibitem[DGMS75]{DGMS}
Pierre Deligne, Phillip Griffiths, John Morgan, and Dennis Sullivan.
\newblock Real homotopy theory of {K}\"ahler manifolds.
\newblock {\em Invent. Math.}, 29(3):245--274, 1975.

\bibitem[Dim04]{Dimca:Sheaves}
Alexandru Dimca.
\newblock {\em Sheaves in topology}.
\newblock Universitext. Springer-Verlag, Berlin, 2004.

\bibitem[dJ01]{deJongFundamentalGroup}
A.~J. de~Jong.
\newblock A conjecture on arithmetic fundamental groups.
\newblock {\em Israel J. Math.}, 121:61--84, 2001.

\bibitem[Dri01]{DrinfeldKashiwara}
Vladimir Drinfeld.
\newblock On a conjecture of {K}ashiwara.
\newblock {\em Math. Res. Lett.}, 8(5-6):713--728, 2001.

\bibitem[Gai07]{GaitsgorydeJong}
D.~Gaitsgory.
\newblock On de {J}ong's conjecture.
\newblock {\em Israel J. Math.}, 157:155--191, 2007.

\bibitem[GL87]{GreenLazarsfeld}
Mark Green and Robert Lazarsfeld.
\newblock Deformation theory, generic vanishing theorems, and some conjectures
  of {E}nriques, {C}atanese and {B}eauville.
\newblock {\em Invent. Math.}, 90(2):389--407, 1987.

\bibitem[GL91]{GreenLazarsfeldHigherObs}
Mark Green and Robert Lazarsfeld.
\newblock Higher obstructions to deforming cohomology groups of line bundles.
\newblock {\em J. Amer. Math. Soc.}, 4(1):87--103, 1991.

\bibitem[GL96]{GabberLoeser}
Ofer Gabber and Fran{\c{c}}ois Loeser.
\newblock Faisceaux pervers {$l$}-adiques sur un tore.
\newblock {\em Duke Math. J.}, 83(3):501--606, 1996.

\bibitem[HTT08]{Hotta+Takeuchi+Tanisaki:Book}
Ryoshi Hotta, Kiyoshi Takeuchi, and Toshiyuki Tanisaki.
\newblock {\em {$D$}-modules, perverse sheaves, and representation theory},
  volume 236 of {\em Progress in Mathematics}.
\newblock Birkh\"auser Boston Inc., Boston, MA, 2008.
\newblock Translated from the 1995 Japanese edition by Takeuchi.

\bibitem[Kas98]{KashiwaraSemisimple}
Masaki Kashiwara.
\newblock Semisimple holonomic {$D$}-modules.
\newblock In {\em Topological field theory, primitive forms and related topics
  ({K}yoto, 1996)}, volume 160 of {\em Progr. Math.}, pages 267--271.
  Birkh\"auser Boston, Boston, MA, 1998.

\bibitem[Kas04]{Kashiwara:t-structures}
Masaki Kashiwara.
\newblock {$t$}-structures on the derived categories of holonomic
  {$\mathcal{D}$} -modules and coherent {$\mathcal{O}$}-modules.
\newblock {\em Mosc. Math. J.}, 4(4):847--868, 981, 2004.

\bibitem[KS90]{Kashiwara+Schapira:Sheaves}
Masaki Kashiwara and Pierre Schapira.
\newblock {\em Sheaves on manifolds}, volume 292 of {\em Grundlehren der
  Mathematischen Wissenschaften [Fundamental Principles of Mathematical
  Sciences]}.
\newblock Springer-Verlag, Berlin, 1990.
\newblock With a chapter in French by Christian Houzel.

\bibitem[KW15]{Kraemer+Weissauer:Vanishing}
Thomas Kr{\"a}mer and Rainer Weissauer.
\newblock Vanishing theorems for constructible sheaves on abelian varieties.
\newblock {\em J. Algebraic Geom.}, 24(3):531--568, 2015.

\bibitem[Laf02]{LafforgueGLn}
Laurent Lafforgue.
\newblock Chtoucas de {D}rinfeld et correspondance de {L}anglands.
\newblock {\em Invent. Math.}, 147(1):1--241, 2002.

\bibitem[Lur]{LurieHA}
Jacob Lurie.
\newblock Higher algebra.
\newblock Available at \url{http://www.math.harvard.edu/~lurie/}.

\bibitem[Lur09]{LurieHTT}
Jacob Lurie.
\newblock {\em Higher topos theory}, volume 170 of {\em Annals of Mathematics
  Studies}.
\newblock Princeton University Press, Princeton, NJ, 2009.

\bibitem[Moc11]{MochizukiWild}
Takuro Mochizuki.
\newblock Wild harmonic bundles and wild pure twistor {$D$}-modules.
\newblock {\em Ast\'erisque}, (340):x+607, 2011.

\bibitem[PS13]{PopaSchnellGVMHM}
Mihnea Popa and Christian Schnell.
\newblock Generic vanishing theory via mixed {H}odge modules.
\newblock {\em Forum Math. Sigma}, 1:e1, 60, 2013.

\bibitem[Sab05]{SabbahTwistor}
Claude Sabbah.
\newblock Polarizable twistor {$D$}-modules.
\newblock {\em Ast\'erisque}, (300):vi+208, 2005.

\bibitem[Sai88]{SaitoHodge}
Morihiko Saito.
\newblock Modules de {H}odge polarisables.
\newblock {\em Publ. Res. Inst. Math. Sci.}, 24(6):849--995 (1989), 1988.

\bibitem[Schn15]{Schnell:holonomic}
{\relax Ch}ristian Schnell.
\newblock Holonomic {D}-modules on abelian varieties.
\newblock {\em Inst. Hautes. \'Etudes Sci. Publ. Math.}, 121(1):1--55, 2015.

\bibitem[Scho15]{Scholze:Torsion}
Peter Scholze.
\newblock On torsion in the cohomology of locally symmetric varieties.
\newblock {\em Ann. of Math. (2)}, 182(3):945--1066, 2015.

\bibitem[Sim93]{SimpsonSubspaces}
Carlos Simpson.
\newblock Subspaces of moduli spaces of rank one local systems.
\newblock {\em Ann. Sci. \'Ecole Norm. Sup. (4)}, 26(3):361--401, 1993.

\bibitem[{Sta}16]{stacks-project}
The {Stacks Project Authors}.
\newblock Stacks project.
\newblock \url{http://stacks.math.columbia.edu}, 2016.

\bibitem[Wei16]{WeissauerAbVarFiniteField}
Rainer Weissauer.
\newblock Vanishing theorems for constructible sheaves on abelian varieties
  over finite fields.
\newblock {\em Math. Ann.}, 365(1-2):559--578, 2016.

\end{thebibliography}
\end{document}